\newtheorem{theo}{Theorem}[section]
\newtheorem{prop}[theo]{Proposition}
\newtheorem{defi}[theo]{Definition}
\newtheorem{lem}[theo]{Lemma}
\newtheorem{Rq}[theo]{Remark}
\title[]{Estimating the covariance of Random Matrices}
\author{Pierre Youssef}
\begin{document}

\maketitle

\begin{abstract}
We extend to the matrix setting a recent result of 
Srivastava-Vershynin \cite{srivastava-vershynin} 
about estimating the covariance matrix of a random 
vector. The result can be interpreted as a quantified version 
of the law of large numbers for  positive 
semi-definite matrices which verify some regularity 
assumption. Beside giving examples, we discuss the notion of log-concave 
matrices and give estimates on the smallest and largest 
eigenvalues of a sum of such matrices.

\end{abstract}

\section{Introduction}

In recent years, interest in matrix valued random variables 
gained momentum. Many of the results dealing with real 
random variables and random vectors were extended to 
cover random matrices. Concentration inequalities like 
Bernstein, Hoeffding and others were obtained in the 
non-commutative setting ( \cite{MR1966716},\cite{MR2946459}, \cite{jordan-tropp}).
 The methods used were mostly combination of methods from the 
real/vector case and some matrix inequalities like the 
Golden-Thompson inequality (see \cite{MR1477662}).
\vskip 0.3cm

    Estimating the 
  covariance matrix of a random vector has gained a lot of
   interest recently. Given a random vector $X$ in $\mathbb{R}^n$,
    the question is to estimate $\Sigma =\mathbb{E}XX^t $. A 
natural way to do this is to take $X_1,..,X_N$ independent copies 
of $X$ and try to approximate $\Sigma$ with the sample covariance
 matrix $\Sigma_N = \frac{1}{N} \sum_i X_iX_i^t$.
The challenging problem is to find the minimal number of samples
needed to estimate $\Sigma$. It is known using a result of 
Rudelson (see \cite{MR1694526}) that for general distributions
 supported on the sphere of radius $\sqrt{n}$, it suffices to take $cn\log(n)$ samples. 
But for many distributions, a number proportional to $n$ is sufficient.
Using standard arguments, one can verify this for gaussian vectors.
It was conjectured by Kannan- Lovasz- Simonovits \cite{MR1608200} 
that the same result holds for log-concave distributions. 
This problem was solved 
by Adamczak et al (\cite{MR2601042}, \cite{MR2769907}). Recently, Srivatava-Vershynin 
proved in \cite{srivastava-vershynin} covariance estimate with 
a number of samples proportional to $n$, for a larger class
of distributions covering the log-concave case. The method 
used was different from previous work on this field and the 
main idea was to randomize the sparsification theorem of 
Batson-Spielman-Srivastava \cite{batson-spielman-srivastava}.

\vskip 0.3cm

Our aim in this paper is to adapt the work of Srivastava-Vershynin 
to the matrix setting replacing the vector $X$ in the problem 
of the covariance matrix by an $n\times m$ random matrix $A$ 
and try to estimate $\mathbb{E}AA^t$ by the same techniques. 
This will be possible since in the deterministic setting, the 
sparsification theorem of Batson-Spielman-Srivastava \cite{batson-spielman-srivastava} 
has been extended to a matrix setting by De Carli Silva-Harvey-Sato \cite{silva-harvey-sato} 
who precisely proved the following:

\begin{theo} 
 Let $B_1,\ldots,B_m$ be positive semi-definite matrices of size $n \times n$
and arbitrary rank.
Set $B := \sum_i B_i$.
For any $\varepsilon \in (0,1)$, there is a deterministic algorithm to construct a vector $y \in \mathbb{R}^m$ with
$O(n / \varepsilon^2)$ nonzero entries such that $y \geq 0$ and
$$
B ~\preceq~ \sum_i y_i B_i ~\preceq~ (1+\varepsilon) B.
$$   
\end{theo}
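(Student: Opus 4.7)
The plan is to mimic the barrier--function argument of Batson--Spielman--Srivastava, substituting full--rank Woodbury updates for their rank--one Sherman--Morrison updates. As a preliminary step I would restrict to the range of $B$ and replace each $B_i$ by $B^{-1/2} B_i B^{-1/2}$, so that without loss of generality $B = I_n$ and $\sum_i B_i = I$. It then suffices to produce $y \geq 0$ supported on $N = O(n/\varepsilon^2)$ indices with $\alpha I \preceq \sum_i y_i B_i \preceq \beta I$ and $\beta/\alpha \leq 1+\varepsilon$; rescaling by $1/\alpha$ yields the theorem.

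Following BSS, I introduce the upper and lower barrier potentials
\[
\Phi^u(A) = \mathrm{tr}\bigl((uI-A)^{-1}\bigr), \qquad \Phi_\ell(A) = \mathrm{tr}\bigl((A-\ell I)^{-1}\bigr),
\]
and build $A_{k+1} = A_k + t_k B_{i_k}$ inductively, shifting both barriers by fixed increments $\delta_U, \delta_L > 0$ at every step while preserving the invariants $\Phi^{u_k}(A_k) \leq \phi_U$ and $\Phi_{\ell_k}(A_k) \leq \phi_L$. The initial barriers $u_0 \simeq n/\varepsilon$ and $-\ell_0 \simeq n/\varepsilon$ and the small caps $\phi_U, \phi_L$ are tuned so that after $N$ iterations the ratio $u_N/\ell_N$ equals $1+\varepsilon$.

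The main obstacle is the existence of a feasible pair $(i_k, t_k)$ at every step. Factoring $B_i = W_i W_i^t$ with $W_i \in \mathbb{R}^{n \times r_i}$ and applying the Woodbury identity yields the closed form
\[
\Phi^{u'}(A + t B_i) - \Phi^{u'}(A) = t\,\mathrm{tr}\Bigl( W_i^t(u'I-A)^{-2} W_i \bigl( I - t W_i^t(u'I-A)^{-1} W_i\bigr)^{-1}\Bigr),
\]
where $u' = u + \delta_U$, together with an analogous expression for the lower barrier. The scalar manipulations of the BSS rank--one case must now be replaced by operator--monotone trace inequalities, for instance $(I-X)^{-1} \preceq I + X + X^2/(1 - \|X\|)$ applied to $X = t\,W_i^t(u'I-A)^{-1} W_i$, to pass from matrix--valued denominators back to manageable traces.

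Summing these bounds over $i$ and using $\sum_i W_i W_i^t = I$ collapses the matrix sums into traces of powers of $(u'I-A)^{-1}$ and $(A-\ell' I)^{-1}$, which are themselves controlled by $\Phi^{u_k}$ and $\Phi_{\ell_k}$. With the BSS--style choice of parameters, the aggregate lower--barrier gain strictly exceeds the aggregate upper--barrier cost, so by pigeonhole some index $i_k$ admits a weight $t_k > 0$ preserving both invariants; this feasibility argument is where the bulk of the work lies, since one has to combine several non--commutative trace inequalities with the matrix Sherman--Morrison calculus. Iterating $N$ times and setting $y_i = \sum_{k : i_k = i} t_k$ produces the required sparse decomposition.
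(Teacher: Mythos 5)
The paper does not actually prove this theorem: it is quoted verbatim from De Carli Silva--Harvey--Sato, and only the two key lemmas of that proof (their Lemmas 19 and 20, reproduced here as Lemma~\ref{lem-condition-upper-bound} and Lemma~\ref{lem-lower-shift}) are used later in randomized form. Your strategy --- barrier potentials, full-rank Woodbury updates in place of Sherman--Morrison, an averaging/pigeonhole step to find a feasible $(i_k,t_k)$ --- is exactly the strategy of the cited proof, and your Woodbury identity for $\Phi^{u'}(A+tB_i)-\Phi^{u'}(A)$ is correct.

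However, you explicitly defer ``where the bulk of the work lies,'' and that deferred step is the one place where the matrix case genuinely differs from BSS, so as written there is a gap. The sufficient condition for adding $tB_i$ under the upper barrier involves the \emph{operator norm} $\bigl\Vert t\,W_i^t(u'I-A)^{-1}W_i\bigr\Vert$ (it must be $<1$ for the Woodbury correction to be PSD and for $A+tB_i\prec u'I$ to hold at all), and operator norms do not sum linearly over $i$, so the rank-one averaging computation $\sum_i v_i^tMv_i=\mathrm{tr}(M)$ has no direct analogue. The fix --- used both in the original paper and in Sections 6 and 7 here --- is to dominate the norm by the trace: for $M\succeq 0$ one has $\bigl\Vert B_i^{1/2}MB_i^{1/2}\bigr\Vert\leqslant \mathrm{tr}\bigl(B_i^{1/2}MB_i^{1/2}\bigr)=\langle M,B_i\rangle$, after which every term in both the upper and lower feasibility conditions is linear in $B_i$, the sum over $i$ collapses via $\sum_iB_i=I$ to traces of powers of the resolvents, and the scalar BSS bookkeeping (bounding these by $\phi_U,\phi_L$ and choosing $\delta_U,\delta_L$ so that the total lower-barrier gain exceeds the total upper-barrier cost) goes through essentially verbatim. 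Your proposed operator inequality $(I-X)^{-1}\preceq I+X+X^2/(1-\Vert X\Vert)$ is not needed and would complicate matters; the norm-by-trace domination is the single missing ingredient. With it supplied, your argument is the proof of De Carli Silva--Harvey--Sato.
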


For an $n\times n$ matrix $A$, denote by $\Vert A\Vert$ the operator norm 
of $A$ seen as an operator on $l_2^n$. The main idea  is to randomize the previous result 
using the techniques of Srivastava-Vershynin \cite{srivastava-vershynin}. 
Our problem can be formulated as follows:\\

Take $B$ a positive semi-definite random matrix 
of size $n\times n$.  How many independent copies of $B$ 
are needed to approximate $\mathbb{E}B$ i.e taking $B_1,
..,B_N$ independent copies of $B$, what is the minimal number
 of samples needed to make $\left\Vert \frac{1}{N} \sum_i B_i -
 \mathbb{E}B\right\Vert$ very small.
 
\vskip 0.2cm
 
 One can view this as a matrix analogue to the covariance estimate
 of a random vector by taking for $B$ the matrix $AA^t$ where
  $A$ is an $n\times m$ random matrix. Moreover, this problem implies 
  an averaging approximation of covariance matrices of many random vectors. 
  Indeed, let $X_1,..,X_m$ be random 
  vectors in $\mathbb{R}^n$ and take $A'$ the $n\times m$ matrix which has 
  $X_1,..,X_m$ as columns. Denote $B= \frac{1}{m}A'A'^t$; it is clear that 
  $B= \frac{1}{m}\sum_{j\leqslant m} X_jX_j^t$. Therefore, when approximating 
  $\mathbb{E}B$ we are approximating the average of the covariance matrices of 
  the random vectors $(X_j)_{j\leqslant m}$.

  With some regularity, we will be able to take a number of independent copies 
  proportional to the dimension $n$. 
However, in the general case this is no longer true.
 In fact, take $B$ uniformly distributed on $\left\{n e_i
 e_i^t\right\}_{i\leqslant n}$ where $e_j$ denotes the 
 canonical basis of $\mathbb{R}^n$. It is easy to verify that
  $\mathbb{E} B= I_n$ and when taking $B_1,..,B_N$ independent copies of $B$ 
the matrix $\frac{1}{N}\sum_i B_i $ is diagonal and its diagonal coefficients are distributed
   as $$\frac{n}{N}(p_1,..,p_n),$$
   where $p_i$ denotes the number of times $e_ie_i^t$ is chosen. This
    problem is well- studied and it is known (see \cite{MR0471016}) 
    that we must take $N \geqslant  cn\log(n)$. This example is 
    essentially due to Aubrun  \cite{aubrun}. More generally, 
if $B$ is a positive semi-definite matrix such that $\mathbb{E}B=I_n$
 and ${\rm Tr}(B)\leqslant n$ almost surely, then 
by Rudelson's inequality in the non-commutative setting (see \cite{MR2653725}) 
it is sufficient to take $cn\log(n)$ samples.
    
\vskip 0.2cm

 The method will work properly for a class of matrices 
 satisfying a matrix strong regularity assumption which we denote 
 by $(MSR)$ and can be viewed as an analog to the property 
 $(SR)$ defined in  \cite{srivastava-vershynin}.

 \begin{defi}\label{def-MSR}[\text{Property $(MSR)$}]\\
  Let $B$ be an $n\times n$ positive semi-definite random 
 matrix such that $\mathbb{E}B =I_n$. We will say that 
 $B$ satisfies $(MSR)$ if for some $c, \eta >0$ we have :
 $$
 \mathbb{P} (\left\Vert PBP\right\Vert \geqslant t) \leqslant \frac{c}{t^{1+\eta}} 
 \quad \forall t\geqslant c\cdot rank(P) \text{ and $\forall P$ 
 orthogonal projection of $\mathbb{R}^n$}.
 $$
 \end{defi}

In the rest of the paper, $c$ will always denote the parameter appearing in this definition while 
$C$ will be a universal constant which may change from line to line. Also, $c(\eta)$ will denote 
a constant depending on $c$ and $\eta$ which may also change from line to line.

The main result of this paper is the following:

\begin{theo}\label{covariance-estimate}
Let $B$ be an $n\times n$  positive semi-definite 
random matrix verifying $\mathbb{E} B= I_n$ and $(MSR)$ for some $\eta >0$.
Then for every $\varepsilon \in (0,1)$, taking 
$N=C_1\footnote{$C_1 = (64c)^{1+\frac{2}{\eta}}
 (1+\frac{1}{\eta})^{\frac{2}{\eta}} \vee 64 (4c)^{\frac{1}{\eta}} 
 (32+\frac{32}{\eta})^{1+\frac{3}{\eta}} \vee 256 (2c)^{\frac{3}{2} +\frac{2}{\eta}}
  (16 +\frac{16}{\eta})^{\frac{4}{\eta}} $ } 
  \frac{n}{\varepsilon^{2+\frac{2}{\eta}}}$ we have
$$ \displaystyle \mathbb{E} \left\Vert \frac{1}{N} \sum_{i=1}^N B_i -I_n\right\Vert 
\leqslant \varepsilon \quad \text{ where $B_1,..,B_N$ are independent copies of $B$}.$$
\end{theo}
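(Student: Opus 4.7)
The plan is to adapt the randomized barrier method of Srivastava--Vershynin \cite{srivastava-vershynin} to the matrix setting, using the Sherman--Morrison--Woodbury identity in place of the rank-one Sherman--Morrison formula. I would build $A_k := \frac{1}{N}\sum_{i\leqslant k} B_i$ incrementally and track its spectrum by two linearly shifting barriers $\ell_k = \ell_0 + k\delta_\ell$ and $u_k = u_0 + k\delta_u$, monitored by the potentials
$$
\Phi^u_k := \mathrm{Tr}(u_k I_n - A_k)^{-1},\qquad \Phi^\ell_k := \mathrm{Tr}(A_k - \ell_k I_n)^{-1}.
$$
As long as both potentials remain below an absolute constant (in expectation) throughout the $N$ steps, the sandwich $\ell_N I_n \preceq A_N \preceq u_N I_n$ persists, and the final gap $u_N - \ell_N$ can be tuned to be $2\varepsilon$.

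Step 1 is a deterministic one-step inequality. Applying Woodbury to $(u_{k+1}I_n - A_k - \tfrac{1}{N}B_{k+1})^{-1}$ and expanding produces, after elementary manipulations, an inequality of the form
$$
\Phi^u_{k+1} \leqslant \Phi^u_k \;-\; \delta_u\,\mathrm{Tr}(u_k I_n - A_k)^{-2} \;+\; \frac{1}{N}\cdot\frac{\mathrm{Tr}\bigl((u_{k+1}I_n - A_k)^{-2} B_{k+1}\bigr)}{1 - \tfrac{1}{N}\|P^u_k B_{k+1} P^u_k\|\cdot\|(u_{k+1}I_n - A_k)^{-1}\|},
$$
where $P^u_k$ is the spectral projection of $A_k$ onto the eigenspaces on which $u_k I_n - A_k$ is small; a symmetric inequality is proved for $\Phi^\ell$.

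Step 2 introduces randomness. Taking $\mathbb{E}$ over $B_{k+1}$ and using $\mathbb{E} B = I_n$, the numerator averages to $\mathrm{Tr}(u_{k+1} I_n - A_k)^{-2}$, which balances against the gain $\delta_u\,\mathrm{Tr}(u_k I_n - A_k)^{-2}$ once $\delta_u$ is chosen of the right order. The nonlinear Woodbury correction in the denominator is controlled by H\"older's inequality together with the moment bound
$$
\mathbb{E}\,\|P B P\|^p \;\leqslant\; c(\eta)\,\mathrm{rank}(P)^p,\qquad 1\leqslant p <1+\eta,
$$
obtained by integrating the tail in Definition \ref{def-MSR}. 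Iterating this expected decrement for $N$ rounds with $N = C_1 n/\varepsilon^{2+2/\eta}$, shifts $\delta_u,\delta_\ell$ of order $\varepsilon/N$ (up to an $\eta$-dependent factor), and initial barriers near $1$ with gap of order $\varepsilon$, one inductively keeps $\mathbb{E}\Phi^u_k,\,\mathbb{E}\Phi^\ell_k$ bounded by an absolute constant. On the barrier event one reads $\|A_N - I_n\|\leqslant \max(u_N-1,\,1-\ell_N)\leqslant \varepsilon$; the complementary event, of small probability by a direct (MSR) tail bound on $\|B_i\|$, contributes negligibly to $\mathbb{E}\|A_N-I_n\|$.

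The main obstacle is Step 1: unlike the rank-one case, the Woodbury correction is a full-matrix expression rather than a scalar, and reducing it to a usable bound involving only $\|P^u_k B_{k+1} P^u_k\|$ is what forces the spectral projection $P^u_k$ into the picture. The trade-off between the $(1+\eta)$-moment of $\|PBP\|$ available from (MSR) and the linear gain term is what dictates the unusual exponent $2+2/\eta$ on $\varepsilon$ and the explicit $\eta$-dependence of the constant $C_1$.
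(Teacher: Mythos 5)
Your overall plan (Sherman--Morrison--Woodbury, two barrier potentials, one step per sample) is indeed the skeleton of the paper's argument, but the proposal has two genuine gaps, both stemming from how the randomness is handled. First, under $(MSR)$ the tails of $\Vert B\Vert$ are only polynomial, so \emph{deterministic} shifts $\delta_u,\delta_\ell$ of order $(1\pm\varepsilon)/N$ cannot keep the barriers valid pathwise: a single heavy sample $B_{k+1}$ makes the Woodbury denominator in your Step~1 negative and the induction breaks, and the failure probability per step is far too large to union-bound over $N\sim n$ steps. The paper's fix (following Srivastava--Vershynin) is to make the shift at each step a \emph{random variable} chosen adaptively so that $A_{k+1}\prec u_{k+1}I_n$ and $\psi_{u_{k+1}}(A_{k+1})\leqslant\psi_{u_k}(A_k)$ hold surely, and then to bound only $\mathbb{E}\,u_N$ and $\mathbb{E}\,\ell_N$ (Theorems~\ref{iteration-lower-bound} and \ref{iteration-upper-bound}). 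Consequently there is no ``barrier event'' of small complementary probability as you claim in your conclusion: the output of the method is $\mathbb{E}\lambda_{\max}\leqslant 1+\varepsilon$ and $\mathbb{E}\lambda_{\min}\geqslant 1-\varepsilon$, and since $\mathbb{E}\max(X,Y)\neq\max(\mathbb{E}X,\mathbb{E}Y)$ these do \emph{not} directly yield $\mathbb{E}\Vert A_N-I_n\Vert\leqslant\varepsilon$. The paper closes this gap by writing $\Vert\frac{1}{N}\sum B_i-I_n\Vert\leqslant\alpha+\beta$ with $\alpha$ the deviation from $\frac{1}{n}{\rm Tr}(\frac{1}{N}\sum B_i)I_n$ and $\beta$ the scalar trace fluctuation; since both terms inside the max defining $\alpha$ are pathwise non-negative one gets $\mathbb{E}\alpha\leqslant\mathbb{E}\lambda_{\max}-\mathbb{E}\lambda_{\min}\leqslant 2\varepsilon$, and $\beta$ is handled by a scalar law of large numbers (Lemma~\ref{lem-firststep}). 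This recentering device is the missing idea in your write-up.

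Second, in your Step~2 a single H\"older application against $\mathbb{E}\Vert PBP\Vert^{p}\leqslant c(\eta)\,{\rm rank}(P)^{p}$ is too lossy to control the Woodbury correction: the relevant spectral projection can have rank comparable to $n$, and the resulting bound does not produce a shift of expected size $1+O(\varepsilon)$. The paper instead slices the spectrum of $A_k$ into dyadic windows of $u-\lambda_i$ and applies the $(MSR)$ tail bound to the projection onto \emph{each} window with threshold $c\vert I_j\vert$ (Lemma~\ref{lemme-MSR}); it is precisely this use of $(MSR)$ for projections of every rank, rather than a single moment bound, that makes the expected shift controllable and produces the exponent $2+\frac{2}{\eta}$. (A minor further point: the lower barrier needs only the weak moment condition $(MWR)$ on rank-one projections, not the full $(MSR)$.)
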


If $X$ is an isotropic random vector of 
  $\mathbb{R}^n$, put $B=XX^t$ then 
  $\left \Vert PBP\right\Vert=\Vert PX\Vert_2^2$. Therefore if $X$ verifies the 
  property $(SR)$ appearing in \cite{srivastava-vershynin}, 
  then $B$ verifies property $(MSR)$. So applying Theorem 
  \ref{covariance-estimate} to $B=X X^t$, we recover
   the covariance estimation as stated in \cite{srivastava-vershynin}.

\vskip 0.2cm

In order to apply our result, beside some examples, we investigate the 
notion of log-concave matrices in relation with the definition of log-concave vectors. 
Moreover remarking some strong concentration 
inequalities satisfied by these matrices we are able, using the ideas developed in 
the proof of the main theorem, to have some results with 
high probability rather than only in expectation as is the case in the main result. This will be 
discussed in the last section of the paper.
\vskip 0.2cm

The paper is organized as follows: in section 2, we discuss Property $(MSR)$ and give some examples, 
in section 3 we show how to prove 
Theorem~\ref{covariance-estimate} using two other results (Theorem~\ref{lower-bound}, 
Theorem~\ref{upper-bound}) which we prove 
respectively in sections 4 and 5 using again two other results (Theorem~\ref{iteration-lower-bound}, 
Theorem~\ref{iteration-upper-bound}) whose proofs are given respectively in sections 6 and 7.  In section 8, we 
discuss the notion of log-concave matrices and prove some related results.
  
\vskip 0.5cm

\section{Property $(MSR)$ and examples}

\vskip 0.5cm

A random vector $X$ in $\mathbb{R}^l$ is called 
isotropic if its covariance matrix is the identity i.e 
$\mathbb{E} XX^t= Id$. In \cite{srivastava-vershynin}, an isotropic 
random vector $X$ in $\mathbb{R}^l$ was 
said to satisfy $(SR)$ if for some $c, \eta>0$,
$$
 \mathbb{P} \left(\left\Vert PX\right\Vert_2^2 
\geqslant t\right) \leqslant \frac{c}{t^{1+\eta}}, 
\quad \forall t\geqslant c\cdot rank(P) \text{ and 
$\forall P$ orthogonal projection of $\mathbb{R}^{l}$}.  
$$

Since $\Vert PXX^tP\Vert =\Vert PX\Vert_2^2$, then clearly 
$B=XX^t$ satisfies $(MSR)$ if and only if $X$ satisfies $(SR)$. 
Therefore if $X$ verifies the property $(SR)$, applying 
Theorem~\ref{covariance-estimate} to $B=X X^t$, we recover
   the covariance estimate as stated in \cite{srivastava-vershynin}.

Let us note that $(MSR)$ implies moment assumptions 
on the quadratic forms $\langle Bx,x\rangle$. To see this, 
first note that if $x\in\mathbb{S}^{n-1}$ then $\langle Bx,x\rangle = \Vert P_xBP_x\Vert$, 
where $P_x$ is the orthogonal projection on ${\rm span}(x)$. Now, by 
integration of tails we have for $1<q< 1+\eta$, $$ \mathbb{E} \langle Bx,x\rangle^{q} \leqslant C(c,q,\eta).$$

Moreover, property $(MSR)$ implies regularity assumption on the 
eigenvalues of the matrix $B$. Indeed, for any orthogonal projection of 
rank $k$ one can write
$$
\Vert PBP\Vert \geqslant 
\min_{\underset{{\rm dim}F=k}{F \subset\mathbb{R}^n}}
\max_{x\in F} \langle Bx,x\rangle
=\lambda_{n-k+1}(B),
$$
where the last equality is given by the Courant-Fisher minimax formula (see \cite{lama-book}). 
Therefore, property $(MSR)$ implies the following: for some $c, \eta>0$, 
$$
 \mathbb{P} \left(\lambda_{n-k+1}(B)
\geqslant t\right) \leqslant \frac{c}{t^{1+\eta}}, 
\quad \forall t\geqslant c\cdot k \text{ and } \forall k\leqslant n.  
$$

We may now discuss some examples for applications of the main result.  
Let us first replace $(MSR)$ with a stronger, but easier to manipulate, property which we denote by 
$(MSR*)$. If $B$ is an $n\times n$ positive semidefinite random matrix such that $\mathbb{E} B=I_n$, we 
will say that $B$ satisfies $(MSR*)$ if for some $c,\eta> 0$:
$$
\mathbb{P} ({\rm Tr}(PB)\geqslant t) \leqslant \frac{c}{t^{1+\eta}} 
 \quad \forall t\geqslant c\cdot rank(P) \text{ and $\forall P$ 
 orthogonal projection of $\mathbb{R}^n$}.  
 $$

Note that since $\Vert PBP\Vert \leqslant {\rm Tr}\left( PBP\right)={\rm Tr}\left( PB\right)$, 
then $(MSR*)$ is clearly stronger than $(MSR)$.\\

\vskip 0.5cm

\subsection{$(2+\varepsilon)$-moments for the spectrum }\hfill\\

As we mentioned before, $(MSR)$, one can see that it implies 
regularity assumptions on the eigenvalues of $B$. Putting 
some independence in the spectral decomposition of $B$, 
we will only need to use the regularity of the eigenvalues. 
To be more precise, we have the following:

\begin{prop}
Let $B= UDU^t$ be the spectral decomposition of an $n\times n$ symmetric positive semidefinite random matrix, where $U$ 
an orthogonal matrix and $D$ is a diagonal matrix whose entries are denoted by $(\alpha_j)_{j\leqslant n}$. 
Suppose that $U$ and $D$ are independent and that 
$(\alpha_j)_{j\leqslant n}$ are independent and satisfy the following:
$$
\forall i\leqslant n, \ \mathbb{E} \alpha_i =1 \quad \text{ and } \quad \left(\mathbb{E} \alpha_i^p\right)^{\frac{1}{p}} \leqslant C,
$$
for some $p> 2$. Then $B$ satisfies $(MSR*)$ with\quad  $\eta =\frac{p}{2}-1$.
\end{prop}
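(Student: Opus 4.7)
The plan is to exploit the independence of $U$ and $D$ together with the spectral decomposition to reduce $\operatorname{Tr}(PB)$, for a fixed orthogonal projection $P$ of rank $k$, to a weighted sum of independent nonnegative scalars, and then apply a Rosenthal-type moment bound.

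First I would write
$$
\operatorname{Tr}(PB) = \operatorname{Tr}\bigl((U^{t}PU)D\bigr) = \sum_{j=1}^{n} q_{j}\,\alpha_{j}, \qquad q_{j} := (U^{t}PU)_{jj}.
$$
Since $U$ is orthogonal, $U^{t}PU$ is itself an orthogonal projection of rank $k$, so its diagonal satisfies $q_{j}\in[0,1]$ and $\sum_{j}q_{j}=k$ deterministically (for every realization of $U$). By the independence assumption, conditionally on $U$ the family $(\alpha_{j})_{j}$ is still independent with $\mathbb{E}\alpha_{j}=1$ and $\|\alpha_{j}\|_{p}\leq C$.

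The core estimate is a moment bound on $\sum_{j}q_{j}\alpha_{j}-k$. Applying Rosenthal's inequality at exponent $p\geq 2$ to the centred independent variables $Y_{j}:=q_{j}(\alpha_{j}-1)$, one gets
$$
\mathbb{E}\Bigl|\sum_{j} Y_{j}\Bigr|^{p} \leq C_{p}\left[\Bigl(\sum_{j} q_{j}^{2}\,\mathbb{E}(\alpha_{j}-1)^{2}\Bigr)^{p/2} + \sum_{j} q_{j}^{p}\,\mathbb{E}|\alpha_{j}-1|^{p}\right] \leq C'_{p}\, k^{p/2},
$$
where the last step uses $\sum_{j}q_{j}^{2}\leq\sum_{j}q_{j}=k$, $\sum_{j}q_{j}^{p}\leq k\leq k^{p/2}$ (valid since $q_{j}\in[0,1]$ and $p\geq 2$), together with the uniform $L^{p}$ bound on the $\alpha_{j}$. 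Markov's inequality then yields, for any $t>k$,
$$
\mathbb{P}\bigl(\operatorname{Tr}(PB)\geq t \mid U\bigr) \leq \mathbb{P}\bigl(|\operatorname{Tr}(PB)-k|\geq t-k \mid U\bigr) \leq \frac{C'_{p}\,k^{p/2}}{(t-k)^{p}}.
$$

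To finish I would choose the constant $c$ appearing in $(MSR*)$ large enough so that both $c\geq 2$ and $c^{1+p/2}\geq 2^{p}C'_{p}$. Then whenever $t\geq c\cdot k$ we have $t-k\geq t/2$ and $k^{p/2}\leq t^{p/2}/c^{p/2}$, so the conditional tail is bounded by $c/t^{p/2}=c/t^{1+\eta}$ with $\eta=\tfrac{p}{2}-1$. Since this bound is independent of $U$, averaging it over $U$ preserves it, which is exactly $(MSR*)$. The technical heart of the argument is the Rosenthal inequality; the main subtlety is to choose the same constant $c$ so that it simultaneously delimits the regime $t\geq c\cdot\operatorname{rank}(P)$ and serves as the numerator of the tail bound, which amounts to taking $c$ sufficiently large compared to the Rosenthal constant $C'_{p}$.
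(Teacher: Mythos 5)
Your proof is correct and follows essentially the same route as the paper: reduce $\operatorname{Tr}(PB)$ to a weighted sum $\sum_j q_j\alpha_j$ with weights on the diagonal of a rank-$k$ projection (you do this by conditioning on $U$, the paper by invoking rotational invariance to set $U=I_n$), then apply Rosenthal's inequality to the centred sum and conclude via Markov. Your explicit tracking of the constant $c$ in $(MSR*)$ is a welcome refinement of the paper's "one can easily conclude".
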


\begin{proof}[Proof]
First note that since $U$ and $D$ are independent and $\mathbb{E} \alpha_i =1$, then $\mathbb{E} B= I_n$. 

Now, $(MSR*)$ is a rotationally invariant property. Therefore we can assume without loss of generality that $U = I_n$ and 
thus that $B=D$. 
Let $k>0$ and $P$ be an orthogonal projection of rank $k$ on $\mathbb{R}^n$ and denote by $(p_{ij})_{i,j\leqslant n}$ the entries of $P$. 
Note that ${\rm Tr}\left(PB\right) = \sum_{i\leqslant n} p_{ii} \alpha_i$, 
and now using Markov's inequality we have for $t> k$,
\begin{align*}
\mathbb{P}\left\{ {\rm Tr}\left( PB\right)\geqslant t\right\}&= \mathbb{P}\left\{  \sum_{i\leqslant n} p_{ii} (\alpha_i -1)
 \geqslant t-k\right\} \\
 &\leqslant \mathbb{P}\left\{  \left\vert\sum_{i\leqslant n} p_{ii} (\alpha_i -1)\right\vert
 \geqslant t-k\right\} \\
 &\leqslant \frac{1}{(t-k)^p} \mathbb{E}\left\vert \sum_{i\leqslant n} p_{ii} (\alpha_i -1)\right\vert^p .
\end{align*}
Using Rosenthal's inequality (see \cite{MR0271721}) we get
$$
\mathbb{E}\left\vert \sum_{i\leqslant n} p_{ii} (\alpha_i -1)\right\vert^p
\leqslant C(p) \max\left\{ \sum_{i\leqslant n}p_{ii}^p\mathbb{E}\vert \alpha_i -1\vert^p , 
\left( \sum_{i\leqslant n}p_{ii}^2\mathbb{E}\vert \alpha_i -1\vert^2\right)^{\frac{p}{2}}\right\} .
$$
Taking in account that $p_{ii}\leqslant 1$, which implies that for any $l\geqslant 1$, $\sum_{i} p_{ii}^l\leqslant k$,
 we deduce that
$$
\mathbb{E}\left\vert \sum_{i\leqslant n} p_{ii} (\alpha_i -1)\right\vert^p
\leqslant C(p) k^{\frac{p}{2}}  .
$$
Instead of Rosenthal's inequality, we could have used a symmetrization argument alongside Khintchine's inequality 
to get the estimate above. \\
One can easily conclude that $B$ satisfies $(MSR*)$ with $\eta= \frac{p}{2} -1$.
\end{proof}

Applying Theorem~\ref{covariance-estimate}, we can deduce the following proposition:

\begin{prop}
Let $B= UDU^t$ be the spectral decomposition of an $n\times n$ symmetric positive semidefinite random matrix, where $U$ 
an orthogonal matrix and $D$ is a diagonal matrix whose entries are denoted by $(\alpha_j)_{j\leqslant n}$. 
Suppose that $U$ and $D$ are independent and that 
$(\alpha_j)_{j\leqslant n}$ are independent and satisfy the following:
$$
\forall i\leqslant n, \ \mathbb{E} \alpha_i =1 \quad \text{ and } \quad \left(\mathbb{E} \alpha_i^p\right)^{\frac{1}{p}} \leqslant C,
$$
for some $p> 2$. Let $\varepsilon \in (0,1)$, then taking $N= C(p)\frac{n}{\varepsilon^{\frac{2p}{p-2}}}$  we have
$$ \displaystyle \mathbb{E} \left\Vert \frac{1}{N} \sum_{i=1}^N B_i -I_n\right\Vert 
\leqslant \varepsilon,$$ where $B_1,..,B_N$ are independent copies of $B$.\\

\end{prop}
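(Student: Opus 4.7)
The plan is essentially to chain together the previous proposition with the main theorem, since the current statement is a direct corollary. First I would invoke the preceding proposition to conclude that under the given hypotheses the random matrix $B$ satisfies property $(MSR*)$ with parameter $\eta = p/2 - 1$. Since $(MSR*)$ is stronger than $(MSR)$ (this is noted in the text via the inequality $\Vert PBP\Vert \leqslant {\rm Tr}(PBP) = {\rm Tr}(PB)$), we obtain for free that $B$ satisfies $(MSR)$ with the same value of $\eta$.

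Next I would feed this into Theorem~\ref{covariance-estimate}. The theorem guarantees that with $N = C_1 \, n / \varepsilon^{2 + 2/\eta}$ independent copies, the desired expected operator norm bound holds. It remains only to substitute $\eta = p/2 - 1$ into the exponent of $\varepsilon$ and simplify:
\begin{equation*}
2 + \frac{2}{\eta} \;=\; 2 + \frac{2}{p/2 - 1} \;=\; 2 + \frac{4}{p - 2} \;=\; \frac{2p}{p-2}.
\end{equation*}
The constant $C_1$ appearing in Theorem~\ref{covariance-estimate} depends only on $c$ and $\eta$, and here $c$ can be read off from the proof of the previous proposition (where it is a function of $p$ and of the uniform moment bound $C$); hence the prefactor absorbs into a constant $C(p)$.

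There is no real obstacle: the work was already done in establishing the previous proposition (via Rosenthal's or a symmetrization/Khintchine argument) and in proving the main theorem. The only thing to verify carefully is that the exponent simplification is correct and that the constant $C_1$, given by the explicit expression in the footnote of Theorem~\ref{covariance-estimate}, remains bounded as a function of $p$ (which it does, since $\eta$ is bounded away from $0$ once $p > 2$ is fixed). This finishes the proof.
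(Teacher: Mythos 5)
Your proposal is correct and is exactly the paper's argument: the preceding proposition gives $(MSR*)$, hence $(MSR)$, with $\eta=\frac{p}{2}-1$, and Theorem~\ref{covariance-estimate} then yields the bound with exponent $2+\frac{2}{\eta}=\frac{2p}{p-2}$. Nothing further is needed.
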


\subsection{From $(SR)$ to $(MSR)$}\hfill\\

We will show how to jump from property $(SR)$ dealing 
with vectors to the property $(MSR*)$ dealing with matrices.

\begin{prop}\label{vector-to-matrix}
Let $A$ be an $n\times m$ random matrix and 
denote by $(C_i)_{i\leqslant m}$ its columns. 
Suppose that $A'^t=\sqrt{m}(C_1^t,..,C_m^t)$ is an isotropic 
random vector in $\mathbb{R}^{nm}$ which satisfies 
property $(SR)$. 
Then $B =AA^t$ verifies 
$\mathbb{E} B=I_n$ and Property $(MSR*)$.
\end{prop}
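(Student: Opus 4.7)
The plan is to pull the matrix property $(MSR*)$ back to the vector property $(SR)$ for $A'$ via a simple block-diagonal tensor construction. The central identity I will use: if $P$ is an orthogonal projection on $\mathbb{R}^n$ of rank $k$, then $\tilde{P}:=I_m\otimes P$ is an orthogonal projection on $\mathbb{R}^{nm}$ of rank $mk$, and since $A'$ is the vertical stacking of the scaled columns $\sqrt{m}\,C_1,\ldots,\sqrt{m}\,C_m$, one gets
$$
\Vert \tilde{P}A'\Vert_2^2 \;=\; m\sum_{i\leqslant m}\Vert PC_i\Vert_2^2 \;=\; m\,{\rm Tr}(PB),
$$
where the last equality uses $B=AA^t=\sum_{i\leqslant m}C_iC_i^t$ together with $P=P^2=P^t$.

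For the expectation, I would read off $\mathbb{E}[C_iC_i^t]$ directly from the diagonal blocks of $\mathbb{E}[A'A'^t]=I_{nm}$ provided by isotropy of $A'$. The factor $\sqrt{m}$ in the definition of $A'$ makes the $(i,i)$ block equal to $m\,\mathbb{E}[C_iC_i^t]$, so each of these blocks must equal $I_n$, and summing over $i$ yields $\mathbb{E}B=\sum_{i\leqslant m}\mathbb{E}[C_iC_i^t]=I_n$.

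For the tail bound, I would fix an orthogonal projection $P$ on $\mathbb{R}^n$ of rank $k$, form $\tilde{P}$ as above, and apply $(SR)$ to $A'$: for every $s\geqslant c\cdot{\rm rank}(\tilde{P})=cmk$,
$$
\mathbb{P}\bigl(m\,{\rm Tr}(PB)\geqslant s\bigr) \;=\; \mathbb{P}\bigl(\Vert \tilde{P}A'\Vert_2^2\geqslant s\bigr) \;\leqslant\; \frac{c}{s^{1+\eta}}.
$$
Substituting $s=mt$ yields, for every $t\geqslant ck$,
$$
\mathbb{P}\bigl({\rm Tr}(PB)\geqslant t\bigr) \;\leqslant\; \frac{c}{(mt)^{1+\eta}} \;\leqslant\; \frac{c}{t^{1+\eta}},
$$
which is exactly $(MSR*)$ with the same constants $c$ and $\eta$ (and in fact an improvement by a factor $m^{1+\eta}$).

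There is no genuine obstacle: the whole argument is a compatibility check between the isotropic normalization by $\sqrt{m}$, the block-diagonal projection $I_m\otimes P$, and the trace identity for $B$. The only thing worth keeping an eye on is matching the threshold $t\geqslant c\cdot {\rm rank}(P)$ required by $(MSR*)$ with the threshold $s\geqslant c\cdot{\rm rank}(\tilde{P})$ coming from $(SR)$; the substitution $s=mt$ makes these agree cleanly.
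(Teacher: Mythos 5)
Your proposal is correct and follows essentially the same route as the paper: the block projection $I_m\otimes P$, the identity $\Vert (I_m\otimes P)A'\Vert_2^2=m\,{\rm Tr}(PB)$, the rank count ${\rm rank}(I_m\otimes P)=m\cdot{\rm rank}(P)$, and the substitution $s=mt$ to match the thresholds are exactly the paper's argument, with the isotropy computation merely phrased block-wise instead of entry-wise. The extra factor $m^{1+\eta}$ you note is also present (implicitly) in the paper's final bound.
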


\begin{proof}[Proof]
For $l\leqslant nm$, one can write $l= (j-1)n+ i$ 
with $1\leqslant i\leqslant n,\ 1\leqslant j\leqslant m$, 
so that the coordinates of $A'$ are given by $a'_l= \sqrt{m}a_{i,j}$, 
and since $A'$ is isotropic we get $\mathbb{E} a_{i,j} a_{r,s}=
\frac{1}{m} \mathbb{\delta}_{(i,j),(r,s)}$.\\
The terms of $B$ are given by $\displaystyle b_{i,j}= 
\sum_{s=1}^m a_{i,s}a_{j,s}$. We deduce that
 $\mathbb{E} b_{i,j}= \delta_{i,j}$ and therefore $\mathbb{E}B= I_n$.\\
Let $P$ be an orthogonal projection of $\mathbb{R}^n$ 
and put $P'=I_m\otimes P$ i.e. $P'$ is an $nm\times nm$ matrix of the form 
$$
P'=\left(\begin{array}{cccc}
P & 0&\ldots &0\\
0&P& \ddots&\vdots\\
\vdots &\ddots & \ddots&\vdots\\
0 &\ldots &\ldots &P
\end{array}\right)
$$

Clearly we have $\left\Vert P'A'\right\Vert_2^2
 = m {\rm Tr}(PB)$ and $rank(P')=m\cdot rank(P)$.\\
 Let $t\geqslant c\cdot rank(P)$ then $mt\geqslant c\cdot rank(P')$ and 
 by property $(SR)$ we have
$$
 \mathbb{P}\left( \left\Vert P'A'\right\Vert_2^2 \geqslant mt\right)\leqslant \frac{c}{(mt)^{1+\eta}} .
$$
This means that 
$$
 \mathbb{P} \left({\rm Tr}(PB)\geqslant t\right) \leqslant \frac{c}{(mt)^{1+\eta}} ,
$$
and therefore $B$ satisfies $(MSR*)$.
\end{proof}

\hfill 
\section{Proof of Theorem~\ref{covariance-estimate} }\hfill

Let us first introduce some notations which will be used in the rest of the paper. 
The set of $n\times n$ symmetric matrices is denoted by $\mathbb{S}_n$. For 
$X\in \mathbb{S}_n$, the notation $\lambda (X)$ always refers to the eigenvalues of $X$. 
For $X, Y \in \mathbb{S}_n$, $X\succeq Y$ means that $X-Y$ is positive semidefinite. 
The vector space $\mathbb{S}_n$ can be endowed with the trace inner product $\langle \cdot,\cdot\rangle$ 
defined by $\langle X,Y\rangle := {\rm Tr}(XY).$
\ \\

Let us now introduce a regularity assumption on the moments 
which we denote by $(MWR)$: 
 $$
\exists p>1 \text{ \ such that \ } \mathbb{E}\left<Bx,x\right>^{p} \leqslant 
 C_p
\quad \forall x\in S^{n-1}.
$$
Note that 
by a simple integration of tails, 
$(MSR)$ (with $P$ a rank one projection) implies $(MWR)$ with $p<1+\eta$.

The proof of Theorem~\ref{covariance-estimate} is based on two theorems 
dealing with the smallest and largest eigenvalues of $\displaystyle \frac{1}{N}\sum_{i=1}^N B_i$.

\begin{theo}\label{lower-bound}
Let $B_i$ be $n\times n$ independent  
positive semidefinite random matrices verifying 
$\mathbb{E} B_i=I_n$ and $(MWR)$ .\\
 Let $\varepsilon \in (0,1)$, then for $$N\geqslant 
 16\left(16 C_p\right)^{\frac{1}{p-1}}
 \frac{n}{\varepsilon^{\frac{2p-1}{p-1}}},$$  we get
$$\mathbb{E}\displaystyle \lambda_{min}\left( 
\frac{1}{N}\sum_{i=1}^N B_i\right) \geqslant 1-\varepsilon .$$
\end{theo}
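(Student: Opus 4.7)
The plan is to adapt the barrier-function method of Batson--Spielman--Srivastava \cite{batson-spielman-srivastava}, in the randomized form introduced by Srivastava--Vershynin \cite{srivastava-vershynin}, from the rank-one setting to a general positive semidefinite update. For $A\in\mathbb{S}_n$ and a scalar $l<\lambda_{\min}(A)$, introduce the lower barrier potential
$$
\Phi^-_l(A) \;:=\; {\rm Tr}\bigl((A - lI_n)^{-1}\bigr).
$$
This quantity is finite exactly when $l < \lambda_{\min}(A)$, and obeys the quantitative lower bound $\lambda_{\min}(A) \geq l + 1/\Phi^-_l(A)$, which will serve to convert barrier control into an eigenvalue estimate.

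The heart of the argument is a one-step iteration lemma, to be isolated as Theorem~\ref{iteration-lower-bound}: for each $\phi > 0$ one can specify an explicit positive shift $\delta = \delta(\phi, p, C_p)$ such that, for every positive semidefinite $A$ and every $l < \lambda_{\min}(A)$ with $\Phi^-_l(A) \leq \phi$, and every independent copy $B$ of our random matrix,
$$
\mathbb{E}\bigl[\Phi^-_{l+\delta}(A+B) \,\big|\, A\bigr] \;\leq\; \Phi^-_l(A).
$$
Taking this for granted and noting that $\delta$ may be chosen monotone in $\phi$, I set $A_0 := 0$ and $l_0 := -n/\phi$, so that $\Phi^-_{l_0}(A_0) = \phi$, and define $A_k := A_{k-1} + B_k$ and $l_k := l_{k-1} + \delta$. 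Iterating the conditional inequality along the natural filtration produces
$$
\mathbb{E}\,\Phi^-_{l_N}(A_N) \;\leq\; \phi, \qquad l_N = -\tfrac{n}{\phi} + N\delta.
$$

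Converting this into an eigenvalue bound is then immediate: since $x\mapsto 1/x$ is convex on $(0,\infty)$, Jensen's inequality gives $\mathbb{E}\bigl(1/\Phi^-_{l_N}(A_N)\bigr) \geq 1/\mathbb{E}\Phi^-_{l_N}(A_N) \geq 1/\phi$, whence
$$
\mathbb{E}\lambda_{\min}\bigl(\tfrac{1}{N}A_N\bigr) \;\geq\; \frac{l_N + 1/\phi}{N} \;=\; \delta - \frac{n-1}{N\phi}.
$$
To conclude, I would pick $\phi$ large enough that the resulting $\delta(\phi, p, C_p)$ satisfies $1 - \delta \leq \varepsilon/2$, and $N\phi$ large enough that $(n-1)/(N\phi) \leq \varepsilon/2$. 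Substituting the explicit quantitative dependence of $\delta$ on $\phi$ that the iteration lemma provides gives exactly the stated threshold $N \geq 16(16 C_p)^{1/(p-1)} n/\varepsilon^{(2p-1)/(p-1)}$.

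The main obstacle is clearly the one-step iteration lemma itself, since the rank-one Sherman--Morrison identity exploited in \cite{srivastava-vershynin} is no longer available for matrix updates. My approach would be to combine the resolvent identity
$$
(A+B-l'I_n)^{-1} = (A-l'I_n)^{-1} - (A-l'I_n)^{-1} B \, (A+B-l'I_n)^{-1}
$$
with the direct expansion $\Phi^-_{l'}(A) - \Phi^-_l(A) = \delta\,{\rm Tr}\bigl((A-lI_n)^{-1}(A-l'I_n)^{-1}\bigr)$ to obtain
$$
\Phi^-_{l'}(A+B) - \Phi^-_l(A) = \delta\,{\rm Tr}\bigl((A-lI_n)^{-1}(A-l'I_n)^{-1}\bigr) - {\rm Tr}\bigl((A-l'I_n)^{-1} B (A+B-l'I_n)^{-1}\bigr).
$$
The $(MWR)$ assumption, applied in the eigenbasis of $A$ through H\"older's inequality on the quadratic forms $\langle Bx, x\rangle$, should allow one to replace the random factor $(A+B-l'I_n)^{-1}$ by the deterministic $(A-l'I_n)^{-1}$ at a controlled cost, showing that in expectation the negative term dominates the positive one and thereby fixing the admissible $\delta$.
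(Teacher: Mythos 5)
There is a genuine gap at the heart of your plan: your one-step lemma asks for a \emph{deterministic} shift $\delta=\delta(\phi,p,C_p)$ for which the potential decreases only \emph{in expectation}, $\mathbb{E}\bigl[\Phi^-_{l+\delta}(A+B)\mid A\bigr]\leqslant \Phi^-_l(A)$. Such a lemma is false under $(MWR)$, and even if it were true it would not yield the conclusion. For the falsity: take $B=0$ with probability $1/2$ and $B=2I_n$ with probability $1/2$ (so $\mathbb{E}B=I_n$ and $(MWR)$ holds trivially), and take $A$ with $\lambda_{\min}(A)-l=1/\phi$, which is compatible with $\Phi^-_l(A)\leqslant\phi$. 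On the event $B=0$ the new matrix satisfies $\lambda_{\min}(A+B)-l=1/\phi<\delta$ once $\phi$ is small and $\delta$ is of order $1-\varepsilon$, so $A+B-(l+\delta)I_n$ is not positive definite, the barrier is crossed with positive probability, and $\Phi^-_{l+\delta}(A+B)$ is no longer a meaningful (or even sign-definite) quantity. This also breaks the conversion step: the inequality $\lambda_{\min}(A)\geqslant l+1/\Phi^-_l(A)$ is only valid when $l$ is genuinely below $\lambda_{\min}(A)$, so a bound on $\mathbb{E}\Phi^-_{l_N}(A_N)$ plus Jensen cannot produce $\mathbb{E}\lambda_{\min}(A_N)\geqslant l_N+1/\phi$ unless the barrier property holds pointwise along the whole iteration.

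The paper's proof (following Srivastava--Vershynin) inverts the roles of what is random and what is controlled surely: the shift $\delta$ is a \emph{random variable}, namely $\delta=(1-s)^3q_2(0,B)\mathbf{1}_{\{q_1(0,B)\leqslant s\}}\mathbf{1}_{\{q_2(0,B)\leqslant s/\phi\}}$ with $q_1(0,B)=\langle L_0^{-1},B\rangle$ and $q_2(0,B)=\langle L_0^{-2},B\rangle/{\rm Tr}(L_0^{-2})$, truncated to $0$ on the bad events. With this choice the barrier condition $A+B\succ(l+\delta)I_n$ and the monotonicity $\phi_{l+\delta}(A+B)\leqslant\phi_l(A)$ hold \emph{almost surely} (via the Sherman--Morrison--Woodbury formula with $U=V=B^{1/2}$, essentially your resolvent identity), and only the size of the shift is estimated in expectation: $(MWR)$ and H\"older give $\mathbb{E}\delta\geqslant(1-s)^3[1-2C_p(\phi/s)^{p-1}]\geqslant 1-\varepsilon$ for $\phi\leqslant\frac{1}{4}(8C_p)^{-1/(p-1)}\varepsilon^{p/(p-1)}$. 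The iteration then tracks $\mathbb{E}l_N\geqslant l_0+N(1-\varepsilon)$ directly, with no Jensen step needed. A secondary slip: you want $\phi$ \emph{small} (not large) to make the one-step shift close to $1$; since $N\approx n/(\varepsilon\phi)$, this is what forces $N\gtrsim n/\varepsilon^{(2p-1)/(p-1)}$.
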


\begin{Rq}
The proof yields a more general 
estimate; precisely if $h=\frac{n}{N}$ then 
$$
\mathbb{E}\displaystyle \lambda_{min}\left( 
\frac{1}{N}\sum_{i=1}^N B_i\right) \geqslant 
1-C(p)\max\left\{h^{\frac{p-1}{2p-1}},h\right\} .
$$
\end{Rq}

\begin{theo}\label{upper-bound}
Let $B_i$ be $n\times n$ independent  
positive semidefinite random matrices verifying 
$\mathbb{E} B_i=I_n$ and $(MSR)$.\\
For $\varepsilon \in (0,1)$ and $N\geqslant 
C_{2}\footnote{$C_{2}= 16 c^{\frac{1}{\eta}} 
(32 +\frac{32}{\eta})^{1+\frac{3}{\eta}}  \vee 16 \sqrt{2} (4c)^{\frac{3}{2} + \frac{2}{\eta}} 
(8+\frac{8}{\eta})^{\frac{4}{\eta}} $}   
\frac{n}{\varepsilon^{2+\frac{2}{\eta}}}$ we have

$$\mathbb{E}\displaystyle \lambda_{\max}\left( \frac{1}{N}\sum_{i=1}^N B_i\right) 
\leqslant 1+\varepsilon .$$

\end{theo}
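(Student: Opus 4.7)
The plan is to use the barrier function method of Batson--Spielman--Srivastava, randomized in the spirit of Srivastava--Vershynin, and to prove the theorem by invoking an iteration lemma (Theorem~\ref{iteration-upper-bound}) that controls an upper barrier in expectation after adding each new sample. Concretely, I would introduce the upper barrier
$$
\Phi^u(S) \;=\; {\rm Tr}\bigl( (uI_n - S)^{-1}\bigr),
$$
defined whenever $\lambda_{\max}(S) < u$. The key properties I would use are that $\Phi^u$ blows up as any eigenvalue of $S$ approaches $u$ from below, so keeping $\Phi^u(S)$ bounded by a fixed constant throughout the process forces the spectrum of $S$ to stay strictly below $u$.

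Set $S_0 = 0$ and $S_k = \sum_{i=1}^k B_i$. I would define a non-decreasing sequence of barriers $u_k = u_0 + k\delta$, where the initial offset $u_0$ and the per-step increment $\delta$ are parameters to be tuned against $\varepsilon$ and $\eta$. The content of the iteration theorem should then be an inequality of the form
$$
\mathbb{E}\bigl[\Phi^{u_{k+1}}(S_{k+1})\,\big|\,S_k\bigr] \;\leqslant\; \Phi^{u_k}(S_k),
$$
valid whenever $\Phi^{u_k}(S_k)$ is below some fixed threshold and $\lambda_{\max}(S_k)$ is sufficiently far from $u_k$. Telescoping in expectation yields $\mathbb{E}\Phi^{u_N}(S_N) \leqslant \Phi^{u_0}(0) = n/u_0$. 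Combined with the lower bound $\Phi^{u_N}(S_N) \geqslant 1/(u_N - \lambda_{\max}(S_N))$ on the event where the process has stayed controlled, this leads to $\mathbb{E}\lambda_{\max}(S_N) \leqslant u_N + \text{(small correction)}$, and dividing by $N$ gives $\mathbb{E}\lambda_{\max}(S_N/N) \leqslant u_0/N + \delta$.

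The parameters would be chosen so that $\delta \approx 1 + \varepsilon/2$ and $u_0/N \lesssim \varepsilon/2$, which requires $u_0 \approx Nh(\varepsilon,\eta)$ with $h$ determining the correct dependence on $\varepsilon$, and then dimensional balance $n/u_0 \leqslant $ (threshold) forces $N \gtrsim n/\varepsilon^{2+2/\eta}$. One subtlety is that the iteration inequality may only hold on a good event where the barrier has not yet exploded; handling the bad event should be absorbed into the expectation by the (MSR) tail bound, at the cost of the explicit constants appearing in $C_2$.

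The main obstacle, and the place where (MSR) has to do real work, is establishing the one-step expectation inequality. The barrier update produces quantities of the form ${\rm Tr}(B \cdot f(S))$ and $\langle B, g(S) \rangle$ with $f,g$ smooth positive functions of $S$ (essentially resolvents), but (MSR) only provides tail control of $\|PBP\|$ for \emph{orthogonal projections} $P$. The natural fix is spectral truncation: split the spectral decomposition of $(u_+ I_n - S)^{-1}$ at a well-chosen threshold, write the large eigenvalues as scalars times the projection $P$ onto the corresponding eigenspace, and apply (MSR) to $\|PBP\|$ via integration of tails; the small eigenvalues contribute a harmless term controlled by ${\rm Tr}\mathbb{E}B = n$. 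Choosing the threshold so that ${\rm rank}(P)$ matches the scale at which (MSR)'s tail kicks in, and then optimizing over the truncation level, is the technical heart of the argument; the resulting inequality feeds directly into the iteration and yields the announced exponent $2 + 2/\eta$ on $\varepsilon$.
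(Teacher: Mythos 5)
Your overall architecture is right --- the upper barrier $\psi_u(A)={\rm Tr}(uI_n-A)^{-1}$, an iteration theorem applied $N$ times, telescoping, and the parameter balance $u_0\approx n/\psi$ with $\psi\approx\varepsilon^{1+2/\eta}$ forcing $N\gtrsim n/\varepsilon^{2+2/\eta}$ --- and you correctly locate where $(MSR)$ must do real work. But there is a genuine gap in the form of your one-step inequality. You take a \emph{deterministic} barrier increment $u_{k+1}=u_k+\delta$ and ask for $\mathbb{E}\bigl[\Phi^{u_{k+1}}(S_{k+1})\mid S_k\bigr]\leqslant\Phi^{u_k}(S_k)$, i.e.\ the potential decreases only in expectation. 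Under $(MSR)$ the matrix $B$ has merely polynomial tails, so for any fixed $\delta$ the event $\lambda_{\max}(B)\geqslant\delta$ (hence $\lambda_{\max}(S_{k+1})\geqslant u_{k+1}$) has positive probability; on that event $\Phi^{u_{k+1}}(S_{k+1})$ is undefined or $+\infty$, so the proposed conditional expectation inequality cannot hold as stated, and once the spectrum crosses the barrier the potential method gives no further control for the remaining $N-k$ steps. Your suggestion to ``absorb the bad event via the $(MSR)$ tail bound'' does not repair this: conditioned on a single overshoot you would still need to bound $\mathbb{E}\lambda_{\max}(S_N)$ with no barrier available, and with only $1+\eta$ moments this is not a routine remainder term.

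The paper's resolution --- and the essential idea inherited from Batson--Spielman--Srivastava and Srivastava--Vershynin --- is to invert your two roles: the shift is \emph{random} and adaptive, $u'=u+\Delta(B)$ with $\Delta=\Delta_1+\Delta_2$ chosen after seeing $B$, so that $A+B\prec u'I_n$ and $\psi_{u'}(A+B)\leqslant\psi_u(A)$ hold \emph{surely}, while it is the increment that is controlled in expectation, $\mathbb{E}\Delta\leqslant 1+\varepsilon$ (Theorem~\ref{iteration-upper-bound}). Telescoping $\mathbb{E}u_N\leqslant u_0+N(1+\varepsilon)$ then bounds $\mathbb{E}\lambda_{\max}(S_N)$ directly, with no bad event to handle. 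Your closing paragraph on handling $(MSR)$ via spectral projections is in the right spirit, but the paper's Lemma~\ref{lemme-MSR} uses a full dyadic decomposition of the resolvent levels $\mu_i=\psi(u-\lambda_i)$ into blocks $I_j$, applying $(MSR)$ to each block projection and summing the tails, rather than a single truncation threshold; the complementary term $\Delta_2$ is handled by the weak moment property $(MWR)$ rather than by ${\rm Tr}\,\mathbb{E}B=n$.
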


\begin{Rq}
The proof yields a more general estimate; 
Precisely if $h=\frac{n}{N}$ then 
$$
\mathbb{E}\displaystyle \lambda_{max}\left( 
\frac{1}{N}\sum_{i=1}^N B_i\right) \leqslant 
1+c(\eta)\max\left\{h^{\frac{\eta}{2+2\eta}},h\right\} . 
$$
Combining this with the previous remark, 
for any $B_1,...,B_N$ $n\times n$ independent  
positive semidefinite random matrices verifying 
$\mathbb{E} B_i=I_n$ and $(MSR)$, we have 
$$
1-c(\eta)\max\left\{h^{\frac{\eta}{2+2\eta}},h\right\}\leqslant 
\mathbb{E}\displaystyle \lambda_{min}\left( 
\frac{1}{N}\sum_{i=1}^N B_i\right)\leqslant 
\mathbb{E}\displaystyle \lambda_{max}\left( 
\frac{1}{N}\sum_{i=1}^N B_i\right) \leqslant 
1+c(\eta)\max\left\{h^{\frac{\eta}{2+2\eta}},h\right\}
$$

\end{Rq}

We will give the proof of these two theorems in sections 4 and 5 respectively. 
We need also a simple lemma:

\begin{lem}\label{lem-firststep}
Let  $1<r\leqslant 2$ and $Z_1,...,Z_N$ be independent positive 
random variables 
 with $\mathbb{E} Z_i=1$ and satisfying
 $
\left( \mathbb{E} Z_i^r\right)^{\frac{1}{r}} \leqslant M
 $
Then
  $$\displaystyle\mathbb{E}  \left\vert \frac{1}{N}
  \sum_{i=1}^N Z_i -1\right\vert \leqslant \frac{2M}{N^{\frac{r-1}{r}}} .$$
\end{lem}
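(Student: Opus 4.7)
The plan is a truncation argument balancing an $L^2$ bound on a bounded part against a tail estimate coming from the $r$-th moment hypothesis. Setting $X_i = Z_i - 1$, I want to control $\mathbb{E}|N^{-1}\sum_i X_i|$. Fix a threshold $K > 0$ (to be optimized at the end) and decompose each variable as $Z_i = Z_i' + Z_i''$, where $Z_i' := Z_i\,\mathbf{1}_{\{Z_i\leqslant K\}}$ and $Z_i'' := Z_i\,\mathbf{1}_{\{Z_i > K\}}$. By the triangle inequality, the left-hand side of the lemma is at most
$$\mathbb{E}\Bigl|\frac{1}{N}\sum_{i=1}^N (Z_i' - \mathbb{E}Z_i')\Bigr| + \mathbb{E}\Bigl|\frac{1}{N}\sum_{i=1}^N (Z_i'' - \mathbb{E}Z_i'')\Bigr|,$$
and these two pieces are handled in opposite ways: the first by the $L^2$ method (which works well because $Z_i'$ is bounded), the second crudely by its first absolute moment (which is small because of the $r$-th moment hypothesis).

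For the bounded piece, the summands are independent and mean-zero, so Cauchy--Schwarz combined with orthogonality produces the upper bound $(N^{-2}\sum_i \mathbb{E}(Z_i')^2)^{1/2}$. Since $r\leqslant 2$, the pointwise inequality $(Z_i')^2 \leqslant K^{2-r}Z_i^r$ valid on $\{Z_i \leqslant K\}$, combined with $\mathbb{E}Z_i^r \leqslant M^r$, gives $\mathbb{E}(Z_i')^2 \leqslant K^{2-r}M^r$, whence the first piece is at most $\sqrt{K^{2-r}M^r/N}$. For the tail piece, the triangle inequality reduces the problem to $2N^{-1}\sum_i \mathbb{E}Z_i''$ (using $Z_i''\geqslant 0$), and the Markov-type bound $\mathbb{E}Z_i\,\mathbf{1}_{\{Z_i > K\}} \leqslant K^{1-r}\mathbb{E}Z_i^r \leqslant K^{1-r}M^r$ controls this by $2K^{1-r}M^r$.

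Choosing $K = N^{1/r}M$ balances the two contributions, each becoming of order $M/N^{(r-1)/r}$, and summing them yields the claimed bound up to a small explicit constant; this particular choice produces $3M/N^{(r-1)/r}$, and trimming this down to $2M/N^{(r-1)/r}$ is routine, either by fine-tuning the threshold (using $M\geqslant 1$, which follows from Jensen applied to $\mathbb{E}Z_i = 1$) or by appealing to the von Bahr--Esseen inequality $\mathbb{E}|\sum_i X_i|^r \leqslant 2\sum_i \mathbb{E}|X_i|^r$ valid for $1\leqslant r\leqslant 2$ and then using Jensen to pass from $L^r$ to $L^1$. I do not foresee any essential obstacle here; the lemma is simply the $1 < r \leqslant 2$ analogue of the $L^2$ weak law of large numbers, and the moment hypothesis is used only to validate the pointwise truncation inequality $(Z_i')^2\leqslant K^{2-r}Z_i^r$ and the Markov tail estimate.
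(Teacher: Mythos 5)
Your argument is genuinely different from the paper's and is sound up to the value of the constant. The paper's proof is a one-liner: symmetrize, apply Cauchy--Schwarz conditionally on the $Z_i$ (so that $\mathbb{E}_\varepsilon\vert\sum\varepsilon_iZ_i\vert\leqslant(\sum Z_i^2)^{1/2}$), use the pointwise comparison $\Vert\cdot\Vert_{\ell^2}\leqslant\Vert\cdot\Vert_{\ell^r}$ for $r\leqslant 2$ together with Jensen, and conclude with $\sum_i\mathbb{E}Z_i^r\leqslant NM^r$; this needs no truncation, no case analysis, and delivers the constant $2$ exactly. Your truncation correctly produces $3M/N^{(r-1)/r}$ at $K=N^{1/r}M$, and for the only use of this lemma in the paper (bounding $\mathbb{E}\beta$ in the proof of Theorem~\ref{covariance-estimate}) the constant is immaterial, since it only feeds into $C_1$.

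The gap is in your final sentence: neither of the proposed routes actually trims $3$ down to $2$. Writing $K=\lambda N^{1/r}M$, your two error terms sum to $g(\lambda)\,M/N^{(r-1)/r}$ with $g(\lambda)=\lambda^{(2-r)/2}+2\lambda^{1-r}$, and $\min_\lambda g(\lambda)>2$ for $r$ up to roughly $1.6$ (about $2.38$ at $r=3/2$, tending to $2$ from above as $r\downarrow 1$); the fact that $M\geqslant 1$ does not help, because both terms scale the same way in $M$, and the trivial bound $\mathbb{E}\vert\frac{1}{N}\sum Z_i-1\vert\leqslant 2$ only covers the regime where the claimed bound is $\geqslant 2$ anyway. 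The von Bahr--Esseen route also falls short of $2$: after Jensen you need to control $\mathbb{E}\vert Z_i-1\vert^r$, and the example $Z_i=\epsilon^{-1}$ with probability $\epsilon$ and $0$ otherwise shows this can be as large as $M^r+1$, so the best available bound is $(2(M^r+1))^{1/r}\leqslant 4^{1/r}M$ per the $\ell^r$ norm, i.e., a constant $4^{1/r}>2$ for every $r<2$. So either state the lemma with constant $3$ (harmless here) or replace the truncation by the symmetrization argument.
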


\begin{proof}
Let $(\varepsilon_i)_{i\leqslant N}$ independent $\pm 1$ Bernoulli variables. 
By symmetrization and Jensen's inequality we can write
\begin{align*}
\mathbb{E}  \left\vert \frac{1}{N}
  \sum_{i=1}^N Z_i -1\right\vert &\leqslant \frac{2}{N} \mathbb{E}
  \left\vert
  \sum_{i=1}^N \varepsilon_iZ_i \right\vert \leqslant \frac{2}{N} \mathbb{E} \left(\sum_{i=1}^N Z_i^2\right)^{\frac{1}{2}}\\
&\leqslant \frac{2}{N} \mathbb{E} \left(\sum_{i=1}^N Z_i^r\right)^{\frac{1}{r}}\leqslant \frac{2}{N}  \left(\sum_{i=1}^N \mathbb{E}Z_i^r\right)^{\frac{1}{r}}\leqslant \frac{2M}{N^{\frac{r-1}{r}}}
.  \end{align*}
\end{proof}

\begin{proof}[Proof of Theorem~\ref{covariance-estimate}]
Take $N\geqslant c(\eta)\frac{n}{\varepsilon^{2+\frac{2}{\eta}}}$ satisfying 
conditions of Theorem~\ref{lower-bound} (with $p= 1+\frac{\eta}{2}$) and Theorem~\ref{upper-bound}. 
Note that by the triangle inequality 

\begin{align*}
 \left\Vert \displaystyle \frac{1}{N}  \sum_{i=1}^N B_i -I_n
\right\Vert &\leqslant  \left\Vert \frac{1}{N} \displaystyle 
\sum_{i=1}^N B_i -\frac{1}{n}{\rm Tr}\left(\frac{1}{N}\sum_{i=1}^N B_i\right)
I_n \right\Vert + \left\Vert \frac{1}{n}{\rm Tr}\left(\frac{1}{N}
\sum_{i=1}^N B_i\right)I_n - I_n\right\Vert\\ \\
&:= \alpha +\beta .\\
\end{align*}
Observe that
\begin{align*}
 \alpha &= \max\left\vert\lambda  \left(  \frac{1}{N}\sum_{i=1}^N B_i 
 -\frac{1}{n}{\rm Tr}\left(\frac{1}{N}\sum_{i=1}^N B_i\right)I_n \right)\right\vert\\
	    &=\max\left[ \lambda_{\max} \left(  \frac{1}{N}\sum_{i=1}^N B_i
	    \right)-\frac{1}{n}{\rm Tr}\left(\frac{1}{N}\sum_{i=1}^N B_i\right), 
	    \frac{1}{n}{\rm Tr}\left(\frac{1}{N}\sum_{i=1}^N B_i\right)-\lambda_{\min} 
	    \left(  \frac{1}{N}\sum_{i=1}^N B_i\right)\right] .\\	   
\end{align*}

Since the two terms in the max are non-negative, 
then one can bound the max by the sum of the two terms. 
More precisely, we get $\alpha\leqslant \displaystyle 
\lambda_{\max} \left(  \frac{1}{N}\sum_{i=1}^N B_i\right)- 
\lambda_{\min} \left(  \frac{1}{N}\sum_{i=1}^N B_i\right)$ and 
by Theorem~\ref{lower-bound} and Theorem~\ref{upper-bound} 
we deduce that 
$\mathbb{E} \alpha \leqslant 2\varepsilon$.
\ \ 
\vskip 0.2cm

Note that $$ \beta =\left\vert 
\frac{1}{N} \sum_{i=1}^N  \frac{{\rm Tr}(B_i)}{n} -1\right\vert =\left\vert 
\frac{1}{N} \sum_{i=1}^N  Z_i -1\right\vert,$$
where $Z_i= \frac{{\rm Tr}(B_i)}{n}$. Since $B_i$ satisfies $(MWR)$, then 
taking $r=\min (2, 1+\frac{\eta}{2})$ we have 
$$
\forall {i\leqslant N},\quad \left(\mathbb{E}  Z_i^r\right)^{\frac{1}{r}} \leqslant \frac{1}{n}\sum_{j=1}^n \left(\mathbb{E} \left<B_ie_j, e_j\right>^r\right)^{\frac{1}{r}} \leqslant c(\eta).
$$ 

Therefore $Z_i$ satisfy the conditions of Lemma~\ref{lem-firststep} and we deduce that $\mathbb{E}\beta \leqslant \varepsilon$ by the choice of $N$.

As a conclusion
$$\mathbb{E} \left\Vert \displaystyle \frac{1}{N}  \sum_{i=1}^N 
B_i -I_n\right\Vert \leqslant \mathbb{E} \alpha +\mathbb{E} \beta 
\leqslant 3\varepsilon .$$

 \end{proof}

\hfill

\section{Proof of Theorem~\ref{lower-bound}}\hfill

Given $A$ an $n\times n$  positive 
semi-definite matrix such that all eigenvalues of $A$ are greater than a 
lower barrier $l_A=l$ i.e $A\succ l.I_n$, define the 
corresponding potential function to be 
$$
\phi_l(A)= {\rm Tr}(A-l\cdot I_n)^{-1}.
$$
The proof of Theorem~\ref{lower-bound} is based on 
the following result which will be proved in section 6:

\begin{theo}\label{iteration-lower-bound}
Let $A\succ l\cdot I_n$ and $\phi_l(A)\leqslant \phi$, 
$B$ a  positive semi-definite random 
matrix satisfying $\mathbb{E}B=I_n$ and Property 
$(MWR)$ with some $p> 1$.\\  Let $\varepsilon \in (0,1)$, if
\begin{equation}\label{condition-phi}
\phi \leqslant \frac{1}{4\left(8C_p\right)^{\frac{1}{p-1}}} \varepsilon^{\frac{p}{p-1}},
\end{equation}
then there exist $l'$ a random variable such that

$$A+B \succ l'\cdot I_n , \quad \phi_{l'}(A+B)\leqslant 
\phi_l(A) \ \  \text{and} \ \  \mathbb{E} l'\geqslant l+ 1-\varepsilon .$$\\
\end{theo}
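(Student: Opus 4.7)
\bigskip

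The plan is to define $l'$ implicitly from $B$ by saturating the potential inequality, and then derive a closed-form lower bound on the increment $\delta := l' - l$ whose expectation is manageable. Concretely, set $M := A - lI_n \succ 0$ and $P := M^{-1}$, so that $\mathrm{Tr}(P) = \phi_l(A) \leq \phi$ and in particular $\|P\| \leq \phi$. Since $\phi_l(A+B) \leq \phi_l(A)$ by operator monotonicity of $X \mapsto X^{-1}$, and $\phi_{l+\delta}(A+B) \to \infty$ as $l+\delta \uparrow \lambda_{\min}(A+B)$, there is a well-defined largest $l' \in [l, \lambda_{\min}(A+B))$ such that $A+B \succ l' I_n$ and $\phi_{l'}(A+B) \leq \phi_l(A)$. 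The two easy conclusions of the theorem hold by construction; the whole task is the expectation estimate $\mathbb{E}l' \geq l + 1 - \varepsilon$.

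The core computation goes via the Sherman--Morrison--Woodbury identity. Writing $B = CC^t$ and $P_\delta := (M - \delta I_n)^{-1}$ (well-defined for $\delta < 1/\phi \leq \lambda_{\min}(M)$), I expand
\[
\mathrm{Tr}\bigl((M+B-\delta I_n)^{-1}\bigr) = \mathrm{Tr}(P_\delta) - \mathrm{Tr}\bigl(C^t P_\delta^2 C\,(I + C^t P_\delta C)^{-1}\bigr).
\]
Bounding $(I + C^t P_\delta C)^{-1} \succeq (1 + \|C^t P_\delta C\|)^{-1} I \succeq (1 + \langle P_\delta, B\rangle)^{-1} I$ gives the sufficient condition
\[
\bigl(\mathrm{Tr}(P_\delta) - \mathrm{Tr}(P)\bigr)\bigl(1 + \langle P_\delta, B\rangle\bigr) \leq \langle P_\delta^2, B\rangle
\]
for $\phi_{l+\delta}(A+B) \leq \phi_l(A)$. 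Since $P_\delta - P = \delta P_\delta P$ and hence $\mathrm{Tr}(P_\delta) - \mathrm{Tr}(P) = \delta \mathrm{Tr}(P_\delta P) \leq \delta \mathrm{Tr}(P^2)(1-\delta\phi)^{-1}$, one obtains an explicit choice of $\delta = \delta(B)$ satisfying
\[
\delta \geq \frac{(1-\delta\phi)\,\langle P^2, B\rangle}{\mathrm{Tr}(P^2)\bigl(1 + \langle P, B\rangle\bigr)}
\]
(after routine rearrangement absorbing the $O(\delta\phi)$ factors from the $P_\delta \to P$ limit into the RHS). This gives a lower bound on the \emph{admissible} $\delta$, hence on $l' - l$, purely in terms of $\langle P, B\rangle$ and $\langle P^2, B\rangle$.

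Taking expectations and using $\mathbb{E}B = I_n$, the numerator averages to exactly $\mathrm{Tr}(P^2)$, so
\[
\mathbb{E}\,\delta \geq \mathbb{E}\!\left[\frac{\langle P^2, B\rangle}{\mathrm{Tr}(P^2)(1 + \langle P, B\rangle)}\right] - O(\phi).
\]
The remaining step is to show this expectation is $\geq 1 - \varepsilon/2$. One writes $1/(1+x) \geq 1 - x$ and expands to get $\mathbb{E}\delta \geq 1 - \mathbb{E}[\langle P^2, B\rangle \langle P, B\rangle]/\mathrm{Tr}(P^2)$; the correction term is then controlled by H\"older and by spectrally decomposing $P = \sum_i \mu_i u_i u_i^t$ with $\sum \mu_i \leq \phi$. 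The key input is that $(MWR)$ yields $\mathbb{E}\langle Bu_i, u_i\rangle^p \leq C_p$, whence by convexity of $x^p$,
\[
\mathbb{E}\,\langle P, B\rangle^{p} \leq \phi^{p-1}\sum_i \mu_i \,\mathbb{E}\langle B u_i, u_i\rangle^p \leq C_p\,\phi^p,
\]
and an analogous estimate for $\langle P^2, B\rangle$ (replacing $\mu_i$ by $\mu_i^2$). The hypothesis $\phi \leq \varepsilon^{p/(p-1)}/(4(8C_p)^{1/(p-1)})$ is exactly what is needed to make the correction smaller than $\varepsilon$.

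The main obstacle is the nonlinearity in $B$: the map $B \mapsto (M+B-\delta I)^{-1}$ is far from linear, so the first-order guess $\delta^\star = \langle P^2, B\rangle/\mathrm{Tr}(P^2)$ (which has expectation exactly $1$) is not admissible, and every step of the above derivation introduces a multiplicative error of the form $1 + \langle P, B\rangle$ or $1 - \delta\phi$. The delicate point is that these errors must be absorbed without losing a factor of $n$: this is precisely where the moment control $\mathbb{E}\langle P, B\rangle^p \leq C_p \phi^p$ (rather than only $\mathbb{E}\langle P, B\rangle = \mathrm{Tr}(P) \leq \phi$) is essential, since Cauchy--Schwarz alone on the correction term would lose a factor of $\sqrt{n}$.
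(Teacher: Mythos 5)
Your overall route is the paper's route: the Sherman--Morrison--Woodbury expansion of the potential, the sufficient condition $\bigl(\mathrm{Tr}(P_\delta)-\mathrm{Tr}(P)\bigr)\bigl(1+\langle P_\delta,B\rangle\bigr)\leqslant\langle P_\delta^2,B\rangle$, the near--first-order choice $\delta\approx\langle P^2,B\rangle/\mathrm{Tr}(P^2)$, and the moment bounds $\mathbb{E}\langle P,B\rangle^p\leqslant C_p\phi^p$, $\mathbb{E}\bigl(\langle P^2,B\rangle/\mathrm{Tr}(P^2)\bigr)^p\leqslant C_p$ deduced from $(MWR)$ are all exactly what the paper does. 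The gap is in your final expectation estimate. You propose to write $1/(1+x)\geqslant 1-x$ and then control the correction $\mathbb{E}\bigl[\langle P^2,B\rangle\langle P,B\rangle\bigr]/\mathrm{Tr}(P^2)$ ``by H\"older''. But H\"older splits a product $\mathbb{E}[XY]\leqslant\|X\|_a\|Y\|_b$ only with $\frac{1}{a}+\frac{1}{b}=1$, which forces $\max(a,b)\geqslant 2$; since $(MWR)$ gives you only $p$-th moments of each factor and the theorem must hold for every $p>1$ (it is applied in the paper with $p=1+\frac{\eta}{2}$, arbitrarily close to $1$), this step fails whenever $1<p<2$. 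The product of two variables each having only a $p$-th moment, $p<2$, need not even be integrable, let alone bounded by $C_p\phi^{p-1}$.

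The missing idea is truncation of the shift, which is not a technicality but the mechanism that makes the argument close. The paper sets (with $q_1=\langle P,B\rangle$, $q_2=\langle P^2,B\rangle/\mathrm{Tr}(P^2)$ in your notation)
\begin{equation*}
\delta=(1-s)^3\,q_2(0,B)\,\mathbf{1}_{\{q_1(0,B)\leqslant s\}}\,\mathbf{1}_{\{q_2(0,B)\leqslant s/\phi\}} .
\end{equation*}
On the good event all the multiplicative errors you call $O(\delta\phi)$ and $1+\langle P,B\rangle$ are bounded \emph{deterministically} by functions of $s$ alone (and $\delta\leqslant 1/\phi$ holds automatically, which your unconditional formula does not guarantee and which is needed for $P_\delta$ to exist at all); off the good event one takes $\delta=0$, which is always admissible. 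The loss in expectation is then $\mathbb{E}\bigl[q_2\,\mathbf{1}_{\mathrm{bad}}\bigr]$, which \emph{is} amenable to H\"older for every $p>1$ because the second factor is an indicator: $\mathbb{E}[q_2\mathbf{1}_{\mathrm{bad}}]\leqslant(\mathbb{E}q_2^p)^{1/p}\,\mathbb{P}(\mathrm{bad})^{1-1/p}\leqslant 2C_p(\phi/s)^{p-1}$ by Markov. Choosing $s=\varepsilon/4$ and using the hypothesis on $\phi$ then gives $\mathbb{E}\delta\geqslant(1-s)^3\bigl[1-2C_p(\phi/s)^{p-1}\bigr]\geqslant 1-\varepsilon$. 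Your plan as written only closes for $p\geqslant 2$; to cover the stated range of $p$ you must replace the series expansion of $1/(1+\langle P,B\rangle)$ by this truncation.
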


\begin{proof}[Proof of Theorem~\ref{lower-bound}]
Let $\phi$ satisfying condition (\cite{condition-phi}) of Theorem~\ref{iteration-lower-bound}. 
We start with $A_0=0$ 
and $l_0=-\frac{n}{\phi}$ so that $\phi_{l_0}(A_0)=-\frac{n}{l_0}=\phi$.\\
Applying Theorem\,\ref{iteration-lower-bound}, one can 
find $l_1$ such that $$A_1=A_0+B_1 \succ l_1 \cdot I_n ,\quad  \phi_{l_1}(A_1)
\leqslant \phi_{l_0}(A_0)=\phi$$
and 
$$\mathbb{E}l_1\geqslant l_0 +1-\varepsilon$$
Now apply Theorem\,\ref{iteration-lower-bound} 
conditionally on $A_1$ to find $l_2$ such that $$A_2=A_1+B_2 \succ l_2 \cdot I_n ,
\quad  \phi_{l_2}(A_2)\leqslant \phi_{l_1}(A_1)$$
and 
$$\mathbb{E}_{B_2}l_2\geqslant l_1 +1-\varepsilon .$$
Using Fubini's Theorem we have 
$$ 
\mathbb{E}l_2\geqslant \mathbb{E}l_1 +1-\varepsilon\geqslant l_0 +2(1-\varepsilon )$$
After $N$ steps, we get $\mathbb{E}\lambda_{\min} (A_N)
\geqslant \mathbb{E} l_N \geqslant l_0 +N(1-\varepsilon )$.  Therefore,
$$\mathbb{E} \lambda_{\min} \left(\frac{1}{N}\displaystyle 
\sum_{i=1}^N B_i \right) \geqslant 1-\varepsilon -\frac{n}{N\phi} .$$
Taking $N=\frac{n}{\varepsilon \phi}$, 
we get $\displaystyle \mathbb{E} \lambda_{\min} \left(\frac{1}{N}\sum_{i=1}^N B_i \right) 
\geqslant 1-2\varepsilon$.

 \end{proof}

\hfill

\section{Proof of Theorem~\ref{upper-bound}}\hfill

Given $A$ an $n\times n$  positive 
semi-definite matrix such that all 
eigenvalues of $A$ are less than an 
upper barrier $u_A= u$ i.e. $A \prec u\cdot I_n$, define 
the corresponding potential function to be $$\psi_u(A) =
{\rm Tr}\left(u\cdot I_n-A\right)^{-1}.$$
The proof of Theorem~\ref{upper-bound} is based on 
the following result which will be proved in section 7:

\begin{theo}\label{iteration-upper-bound}
Let $A\prec u\cdot I_n$ and $\psi_u(A)\leqslant \psi$, 
$B$ a  positive semi-definite random 
matrix satisfying $\mathbb{E}B=I_n$ and Property $(MSR)$.\\  
Let $\varepsilon \in (0, 1)$, if 
\begin{equation}\label{choice-of-psi}
\psi \leqslant {C_3}\cdot\footnote{$C_3 = \left[8(2c)^{\frac{1}{\eta}} (16+ \frac{16}{\eta})^{1+\frac{3}{\eta}}\right]^{-1} \wedge  
\left[ 16 (2c)^{\frac{3}{2} +\frac{2}{\eta}} (8+ \frac{8}{\eta})^{\frac{4}{\eta}} \right]^{-1} $ }\varepsilon^{1+\frac{2}{\eta}}
\end{equation}
 there exists $u'$ a random variable such that
$$A+B \prec u'\cdot I_n , \quad \psi_{u'}(A+B)
\leqslant \psi_u(A) \ \  \text{and} \ \  \mathbb{E} u'\leqslant u+1+\varepsilon  .$$\\
\end{theo}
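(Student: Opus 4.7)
My plan is to follow the randomized barrier strategy of Srivastava-Vershynin, using the matrix Sherman-Morrison-Woodbury identity (the same device underlying the matrix sparsification of De Carli Silva-Harvey-Sato) to absorb the full PSD update $B$ in a single step. Write $M_\delta := (u+\delta)I_n - A$ for $\delta \geq 0$, so that $\psi_u(A) = {\rm Tr}(M_0^{-1}) \leq \psi$. Factoring $(u+\delta)I_n - (A+B) = M_\delta^{1/2}(I - M_\delta^{-1/2} B M_\delta^{-1/2})M_\delta^{1/2}$, the PSD operator inequality $X(I-X)^{-1} \preceq X/(1-\|X\|)$ (valid for $0 \preceq X \prec I$) gives, for any $\delta > 0$ with $B \prec M_\delta$,
\[
\psi_{u+\delta}(A+B) \leq \psi_{u+\delta}(A) + \frac{{\rm Tr}(B\, M_\delta^{-2})}{1 - \|M_\delta^{-1/2} B M_\delta^{-1/2}\|}.
\]
Combined with the identity $\psi_u(A) - \psi_{u+\delta}(A) = \delta\,{\rm Tr}(M_0^{-1} M_\delta^{-1})$, a sufficient condition for $\psi_{u+\delta}(A+B) \leq \psi$ is
\[
\delta\,{\rm Tr}(M_0^{-1} M_\delta^{-1}) \bigl(1 - \|M_\delta^{-1/2} B M_\delta^{-1/2}\|\bigr) \geq {\rm Tr}(B M_\delta^{-2}).
\]
I define $\delta^{\ast}(B)$ to be the least $\delta>0$ for which this display and $B \prec M_\delta$ both hold, and set $u' := u + \delta^{\ast}(B)$. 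By construction $A+B \prec u' I_n$ and $\psi_{u'}(A+B) \leq \psi$ hold almost surely.

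The substantive task is then to prove $\mathbb{E}\delta^{\ast} \leq 1 + \varepsilon$. The value $\delta = 1$ is nominal: since $\mathbb{E}B = I_n$ and $M_0 \preceq M_1$, the expectations of both sides of the sufficient inequality at $\delta = 1$ reduce to ${\rm Tr}(M_0^{-1} M_1^{-1}) \geq {\rm Tr}(M_1^{-2})$, which always holds. To quantify the overshoot, I would use the layer-cake decompositions
\[
M_\delta^{-1} = \int_0^{\|M_\delta^{-1}\|} P_s\, ds, \qquad M_\delta^{-2} = \int_0^{\|M_\delta^{-1}\|} 2s\, P_s\, ds,
\]
where $P_s = P_s(\delta)$ is the spectral projector of $M_\delta^{-1}$ onto $\{\lambda \geq s\}$. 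This rewrites ${\rm Tr}(B M_\delta^{-2})$ and $\|M_\delta^{-1/2} B M_\delta^{-1/2}\|$ as weighted integrals of ${\rm Tr}(P_s B P_s)$ and $\|P_s B P_s\|$ respectively, to which $(MSR)$ applies layer by layer. Integrating the tail $\mathbb{P}(\|P_s B P_s\| \geq t) \leq c/t^{1+\eta}$ for $t \geq c\cdot{\rm rank}(P_s)$ yields $L^q$-estimates of $\|P_s B P_s\|$ for $q < 1+\eta$, and the constraint ${\rm Tr}(M_\delta^{-1}) \leq \psi$ ties the ranks and the support of the decomposition to the smallness parameter $\psi$.

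The main obstacle is the denominator $1 - \|M_\delta^{-1/2} B M_\delta^{-1/2}\|$, which can be arbitrarily close to zero on a small-probability event and thereby drive $\delta^{\ast}$ to grow. I would handle this by truncation: on the event where this norm is at most $1/2$ the Taylor-type bound together with the layer-cake integrals controls the expected overshoot, while on the complementary event, whose probability is at most $c(\eta)\,\psi^{1+\eta}$ by $(MSR)$ applied to a projector of rank $\lesssim 1/\psi$, we enlarge $\delta$ just enough to restore the denominator and pay for the contribution against the small probability. Optimizing the truncation threshold against the tail exponent $\eta$ is exactly what forces the power $\varepsilon^{1+\frac{2}{\eta}}$ in the hypothesis (\ref{choice-of-psi}) and pins down the explicit constant $C_3$. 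The overall organization mirrors the vector upper-potential argument of \cite{srivastava-vershynin}, with ${\rm Tr}(P_s B P_s)$ and $\|P_s B P_s\|$ replacing the scalar quadratic forms $\|P_s X\|_2^2$.
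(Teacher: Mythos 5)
Your overall architecture is the same as the paper's: the Sherman--Morrison--Woodbury identity gives exactly the sufficient condition you write down (the paper's Lemma~\ref{lem-condition-upper-bound}), the shift is defined as the least $\delta$ making that condition hold, and the work is in bounding $\mathbb{E}\delta^{\ast}$. But the two substantive steps of that bound are asserted rather than proved, and as stated neither goes through. First, your claim that the bad event for the denominator has probability at most $c(\eta)\psi^{1+\eta}$ ``by $(MSR)$ applied to a projector of rank $\lesssim 1/\psi$'' does not work: $M_\delta^{-1/2}BM_\delta^{-1/2}$ is not of the form $PBP$ for a projection, and once you decompose by spectral level sets the relevant projectors $P_{I_j}$ onto $\{i:\ 2^j\leqslant \psi(u-\lambda_i)<2^{j+1}\}$ can have rank up to $2^{j+1}$, which is unbounded in terms of $1/\psi$; the constraint $t\geqslant c\cdot\mathrm{rank}(P)$ in $(MSR)$ then forces a block-dependent threshold. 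This is precisely what the paper's Lemma~\ref{lemme-MSR} handles, by taking $\varepsilon_j$ to be the maximum of a rank-proportional term and a geometrically decaying term and then integrating the tail of the resulting shift $\mu'$; the output is a bound on the \emph{expectation} of the extra shift $\Delta_1$ needed to tame the norm term (of order $\psi^{\eta}/\theta^{1+\eta}$), not a small-probability statement at fixed $\delta$. Your layer-cake version of this could in principle be made to work, but it is the heart of the matter and is entirely missing.

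Second, the statement that ``$\delta=1$ is nominal'' because the expectations of the two sides of the sufficient inequality compare favorably is not a valid argument: $\mathbb{E}X\geqslant\mathbb{E}Y$ says nothing about where the random crossing point $\delta^{\ast}$ sits, and the random denominator cannot be pulled out of the expectation. The paper instead proves the deterministic comparison $P_2(x,B)\leqslant(1+x\psi)P_2(0,B)$ (using $(u-\lambda_i)\psi\geqslant1$), deduces that on the event $\{P_2(0,B)\leqslant\theta/4\psi\}$ the trace part of the shift is at most $(1+4\theta)P_2(0,B)$, whose expectation is $1+4\theta$, and controls the complementary event by H\"older \emph{after} establishing the a priori moment bound $\mathbb{E}\Delta_2^{1+\eta/2}\leqslant c(\eta)$ from a tail estimate on $\Delta_2$. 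Your truncation plan needs that same moment bound on $\delta^{\ast}$ (otherwise ``paying for the contribution against the small probability'' is circular), and since your $\delta^{\ast}$ is defined through the combined condition with the norm term in the denominator, you would in effect be forced back to the paper's decomposition $\Delta=\Delta_1+\Delta_2$ with a threshold $\theta$ splitting the budget $Q_1\leqslant\theta$, $Q_2\leqslant1-\theta$. So: right skeleton, but the two lemmas that make it a proof are not there.
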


\begin{proof}[Proof of Theorem~\ref{upper-bound}]
Let $\psi$ satisfying the condition of Theorem~\ref{iteration-upper-bound}. 
We start with $A_0=0$, $u_0=\frac{n}{\psi}$ 
so that $\psi_{u_0}(A_0)=\psi$.\\
Applying Theorem\,\ref{iteration-upper-bound}, one can find $u_1$ such that

$$A_1=A_0+B_1 \prec u_1\cdot I_n , \quad \psi_{u_1}(A_1)\leqslant 
\psi_{u_0}(A_0) \ \  \text{and} \ \  \mathbb{E} u_1\leqslant u_0+1+\varepsilon  .$$

Now apply Theorem\,\ref{iteration-upper-bound} conditionally on $A_1$ to find $u_2$ such that 
$$A_2=A_1+B_2 \prec u_2\cdot I_n , \quad \psi_{u_2}(A_2)
\leqslant \psi_{u_1}(A_1) \ \  \text{and} \ \  \mathbb{E}_{B_2} u_2
\leqslant u_1+1+\varepsilon  .$$
Using Fubini's Theorem we have
$$
\mathbb{E} u_2
\leqslant \mathbb{E}u_1+1+\varepsilon \leqslant u_0 + 2(1+\varepsilon) .$$
After $N$ steps we get $\mathbb{E}\lambda_{\max}
\left(\displaystyle \sum_{i=1}^N B_i\right)\leqslant \mathbb{E} u_N
\leqslant u_0 +N(1+\varepsilon )$.

Taking $N\geqslant \frac{n}{\varepsilon\psi} = C_3^{-1}\frac{n}{\varepsilon^{2+\frac{2}{\eta}}}$, 
we deduce that $$\displaystyle\mathbb{E} \lambda_{\max}
\left(\frac{1}{N}\sum_{i=1}^N B_i\right) \leqslant 1+2\varepsilon.$$
\end{proof}

\section{Proof of Theorem~\ref{iteration-lower-bound}}\hfill

\subsection{Notations}\hfill\\

We work under the assumptions of Theorem~\ref{iteration-lower-bound}. 
We are looking for a random variable $l'$ of the form $l+\delta$ 
where $\delta$ is a positive random variable playing the role of the shift.

If in addition $A\succ (l+\delta)\cdot I_n$, we will note 
$$L_\delta 
=A-(l+\delta)\cdot I_n$$ so that 
$${\rm Tr}\left(B^{\frac{1}{2}}(A-(l+\delta)\cdot I_n)^{-1}B^{\frac{1}{2}}\right)=\left<L_{\delta}^{-1},B\right>.$$

$\lambda_1,..,\lambda_n$ will denote the eigenvalues of 
$A$ and $v_1,..,v_n$ the corresponding normalized eigenvectors. 
Note that $(v_i)_{i\leqslant n}$ are also the eigenvectors of 
$L_{\delta}^{-1}$ corresponding to the eigenvalues 
$\frac{1}{\lambda_i -(l+\delta)}$.\\

\subsection{Finding the shift}\hfill\\

To find sufficient conditions for such $\delta$ 
to exist, we need a matrix extension of Lemma~3.4 in \cite{batson-spielman-srivastava} which, up to a minor change, 
is essentially contained in Lemma\,$20$ in 
\cite{silva-harvey-sato} and we formulate it here in Lemma~\ref{lem-lower-shift}. This method uses 
the Sherman-Morrison-Woodbury formula:

\begin{lem}\label{sherman-morrison}
Let $E$ be an $n\times n$ invertible matrix,
 $C$ a $k \times k$ invertible matrix, $U$ 
 an $n\times k$ matrix and $V$ a $k\times n$ matrix. Then we have:
$$ (E+UCV)^{-1} = E^{-1} - E^{-1}U(C^{-1}+VE^{-1}U)^{-1}VE^{-1}$$
\end{lem}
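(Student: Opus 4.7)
The plan is to verify the Sherman-Morrison-Woodbury identity by direct algebraic manipulation, specifically by multiplying the proposed inverse on the left by $E+UCV$ and checking that the product reduces to $I_n$. This is a purely computational lemma: no probabilistic or analytic input is required, only bookkeeping with block matrix products. I would first note that the formula implicitly requires $C^{-1}+VE^{-1}U$ to be invertible, so that the right-hand side is well-defined; this is usually bundled into the hypotheses and I would state it explicitly at the beginning of the proof.

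Concretely, I would expand
\[
(E+UCV)\bigl[E^{-1} - E^{-1}U(C^{-1}+VE^{-1}U)^{-1}VE^{-1}\bigr]
= I_n + UCVE^{-1} - \bigl[U + UCVE^{-1}U\bigr](C^{-1}+VE^{-1}U)^{-1}VE^{-1}.
\]
The decisive step is the factorisation
\[
U + UCVE^{-1}U = UC\bigl(C^{-1}+VE^{-1}U\bigr),
\]
which lets the inverse cancel cleanly, leaving $UC\cdot VE^{-1} = UCVE^{-1}$. Subtracting this from $I_n + UCVE^{-1}$ gives $I_n$, which finishes the verification. By symmetry the product in the opposite order is also $I_n$, so the right-hand side is a genuine two-sided inverse.

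There is no real obstacle here: the only subtlety to flag is the invertibility of the small $k\times k$ matrix $C^{-1}+VE^{-1}U$, since otherwise the capacitance term is meaningless. In the application in the next subsection, $E = L_\delta$ will be a shifted matrix whose invertibility on the relevant subspace is ensured by the barrier condition $A\succ(l+\delta)I_n$, and the relevant $C$ and $U,V$ will come from a low-rank decomposition of $B$, so the non-degeneracy needed to apply the lemma will follow from the geometric setup rather than from the lemma itself.
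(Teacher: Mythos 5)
Your verification is correct and complete: the expansion of $(E+UCV)\bigl[E^{-1}-E^{-1}U(C^{-1}+VE^{-1}U)^{-1}VE^{-1}\bigr]$ together with the factorisation $U+UCVE^{-1}U=UC\left(C^{-1}+VE^{-1}U\right)$ is exactly the standard one-line proof of the Sherman--Morrison--Woodbury identity. The paper itself gives no proof of this lemma (it is quoted as a known formula), so there is no alternative argument to compare against; your direct computation is the canonical route. You are also right to flag that the invertibility of the capacitance matrix $C^{-1}+VE^{-1}U$ is an implicit hypothesis missing from the statement; in both applications in the paper one has $C=I_n$, $U=V=B^{1/2}$, and the capacitance matrix is either $I_n+B^{1/2}L_\delta^{-1}B^{1/2}\succeq I_n$ or $I_n-B^{1/2}U_\Delta^{-1}B^{1/2}$ with $\Vert B^{1/2}U_\Delta^{-1}B^{1/2}\Vert<1$, so invertibility is automatic, just as you anticipate.
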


\begin{lem}\label{lem-lower-shift}
Let $A$ as above satisfying $A\succ l\cdot I_n$. 
Suppose that one can find $\delta >0$ 
verifying $\delta \leqslant \frac{1}{\Vert L_0^{-1}\Vert}$ and

$$\frac{\left<L_{\delta}^{-2},B\right>}{\phi_{l+\delta}(A)
 - \phi_l(A)} - \left\Vert B^{\frac{1}{2}}L_{\delta}^{-1}B^{\frac{1}{2}}\right\Vert\ \geqslant 1$$
 Then 
$$\lambda_{\min}(A+B)> l+\delta \quad and
 \quad \phi_{l+\delta}(A+B) \leqslant \phi_l(A).$$

\end{lem}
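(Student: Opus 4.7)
The plan is to express the inverse $(L_\delta + B)^{-1}$ via the Sherman--Morrison--Woodbury identity of Lemma~\ref{sherman-morrison}, take traces, and match the resulting lower bound against the hypothesis.

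First, I would note that $L_\delta = L_0 - \delta\cdot I_n$ is positive definite: by hypothesis $\delta \leqslant \|L_0^{-1}\|^{-1} = \lambda_{\min}(L_0)$, and the statement is really meant with strict inequality, since the quantities $L_\delta^{-1}$ and $L_\delta^{-2}$ appear in the assumption. Since $B \succeq 0$, this already gives $L_\delta + B \succ 0$, i.e.\ $\lambda_{\min}(A+B) > l + \delta$.

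Next, apply Lemma~\ref{sherman-morrison} with $E = L_\delta$, $U = V = B^{1/2}$ and $C = I_n$ to obtain
\[
(L_\delta + B)^{-1} = L_\delta^{-1} - L_\delta^{-1} B^{1/2} (I_n + M)^{-1} B^{1/2} L_\delta^{-1}, \qquad M := B^{1/2} L_\delta^{-1} B^{1/2}.
\]
Taking the trace and using cyclicity yields
\[
\phi_{l+\delta}(A+B) = \phi_{l+\delta}(A) - \mathrm{Tr}\bigl((I_n + M)^{-1} B^{1/2} L_\delta^{-2} B^{1/2}\bigr).
\]
I would then lower bound the correction term by combining $(I_n + M)^{-1} \succeq (1+\|M\|)^{-1} I_n$ with the elementary identity $\mathrm{Tr}(PQ) = \mathrm{Tr}(Q^{1/2} P Q^{1/2}) \geqslant \alpha\,\mathrm{Tr}(Q)$ when $P \succeq \alpha I_n$ and $Q \succeq 0$. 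Applied to $P = (I_n + M)^{-1}$ and $Q = B^{1/2} L_\delta^{-2} B^{1/2}$, this gives
\[
\mathrm{Tr}\bigl((I_n + M)^{-1} B^{1/2} L_\delta^{-2} B^{1/2}\bigr) \geqslant \frac{\langle L_\delta^{-2}, B\rangle}{1 + \|B^{1/2} L_\delta^{-1} B^{1/2}\|}.
\]

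Finally, the assumed inequality is designed precisely so that, after rearrangement, the right-hand side above is at least $\phi_{l+\delta}(A) - \phi_l(A)$ (note that this quantity is strictly positive because $L_\delta \prec L_0$ for $\delta > 0$). Substituting into the trace identity then delivers $\phi_{l+\delta}(A+B) \leqslant \phi_l(A)$, completing the proof. The argument is essentially bookkeeping; the only step that requires any thought is recognizing that Sherman--Morrison--Woodbury is the right tool and that the factorization $B = B^{1/2}\cdot I_n\cdot B^{1/2}$ together with the scalar bound $(I_n+M)^{-1}\succeq (1+\|M\|)^{-1}I_n$ produces exactly the expression appearing on the left of the hypothesis. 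There is no serious obstacle.
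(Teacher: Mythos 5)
Your proof is correct and follows essentially the same route as the paper: apply Sherman--Morrison--Woodbury with $E=L_\delta$, $U=V=B^{1/2}$, $C=I_n$, take traces, bound the correction term below by $\langle L_\delta^{-2},B\rangle/(1+\Vert B^{1/2}L_\delta^{-1}B^{1/2}\Vert)$, and rearrange the hypothesis. Your explicit treatment of the strict positivity of $L_\delta$ and of the intermediate trace inequality only spells out steps the paper leaves implicit.
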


\begin{proof}
First note that $\frac{1}{\Vert L_0^{-1}\Vert} = \lambda_{\min} (A)-l$, 
so the first condition on $\delta$ 
implies that $\lambda_{\min}(A)\geqslant l+\delta$.\\
Now using Sherman-Morrison-Woodbury formula with $E=L_{\delta},\ 
U=V=B^{\frac{1}{2}},\ C=I_n$ we get :

\begin{align*}
\phi_{l+\delta}(A+B) &= {\rm Tr}\left(L_{\delta}+B\right)^{-1}\\
&= \phi_{l+\delta}(A) - {\rm Tr}\left(L_{\delta}^{-1}B^{\frac{1}{2}}
\left(I_n+B^{\frac{1}{2}}L_{\delta}^{-1}B^{\frac{1}{2}}\right)^{-1}
B^{\frac{1}{2}}L_{\delta}^{-1}\right)\\
&\leqslant \phi_{l+\delta}(A)-\frac{\left<L_{\delta}^{-2},
B\right>}{1+\left\Vert B^{\frac{1}{2}}L_{\delta}^{-1}B^{\frac{1}{2}}\right\Vert}
\end{align*}
Rearranging the hypothesis, we get $\phi_{l+\delta}(A+B)\leqslant \phi_l(A)$.

\end{proof}

Since $\Vert L_0^{-1}\Vert\leqslant {\rm Tr}\left(L_0^{-1}\right)=\phi_l(A)$ and 
$\left\Vert B^{\frac{1}{2}}L_{\delta}^{-1}B^{\frac{1}{2}}\right\Vert\leqslant  \left<L_\delta^{-1},B\right>$ 
then in order to satisfy conditions of Lemma\,\ref{lem-lower-shift}, we may search 
for $\delta$ satisfying:

\begin{equation}\label{eq-firstcondition-lower-bound}
\delta\leqslant \frac{1}{\phi_l(A)} \quad \text{ and } \quad \frac{\left<L_{\delta}^{-2},B\right>}{\phi_{l+\delta}(A)- \phi_l(A)}-\left<L_\delta^{-1},B\right>\geqslant 1
  \end{equation}

For $t \leqslant \frac{1}{\phi}$, let us note : \vskip 0.2 cm

$$q_1(t , B)= \left<L_t^{-1},B\right>= {\rm Tr}\left(B(A-(l+t)\cdot I_n)^{-1}\right)$$
and
$$q_2(t ,B)= \frac{\left<L_t^{-2},B\right>}{ {\rm Tr}(L_t^{-2})}=
\frac{{\rm Tr}\left(B(A-(l+t)\cdot I_n\right)^{-2})}{{\rm Tr}\left(A-(l+t)\cdot I_n\right)^{-2}}$$ 

We have already seen in Lemma\,\ref{lem-lower-shift} 
that if $t\leqslant \frac{1}{\phi}\leqslant \frac{1}{\Vert L_0^{-1}\Vert}$ then $A\succ (l+t)\cdot I_n$ so the definitions above make sense.
Since we have 

\begin{align*}
\phi_{l+\delta}(A)-\phi_l(A)
&={\rm Tr}(A-(l+\delta)\cdot I_n)^{-1}-{\rm Tr}(A-l\cdot I_n)^{-1} \\
 &= \delta {\rm Tr}((A-(l+\delta)\cdot I_n)^{-1}(A-l\cdot I_n)^{-1})\\
&\leqslant \delta {\rm Tr}(A-(l+\delta)\cdot I_n)^{-2},
\end{align*}
then in order to have $(\ref{eq-firstcondition-lower-bound})$, it will be sufficient to choose $\delta$ 
satisfying $\delta \leqslant \frac{1}{\phi}$ and 
\begin{equation}\label{eq-condition-lower-bound}
\frac{1}{\delta}
 q_2(\delta ,B) - q_1(\delta ,B) \geqslant 1
 \end{equation}
 
Note that $q_1$ and $q_2$ can be expressed as follows: 
$$
q_1(t , B)=\displaystyle \sum_{i=1}^n \frac{\left<Bv_i,v_i\right>}{\lambda_i-l-t} 
\quad \text{ and } \quad q_2(t ,B)=\frac{\sum_{i} 
\frac{\left<Bv_i,v_i\right>}{(\lambda_i-l-t)^2}}{ \sum_{i} (\lambda_i-l-t)^{-2}}.
$$

Since $\phi_l(A)=\displaystyle \sum_{i=1}^n (\lambda_i -l)^{-1} \leqslant \phi$, then $(\lambda_i -l)\cdot \phi 
\geqslant 1$ for all $i$. Thus

$$(1-t\cdot \phi)(\lambda_i-l)=\lambda_i -l -t\cdot (\lambda_i -l)\cdot \phi 
\leqslant \lambda_i -l -t \leqslant \lambda_i -l$$
and therefore $$q_1(t,B) \leqslant (1-t\cdot \phi)^{-1} q_1(0,B)$$ 
and $$(1-t\cdot\phi)^{2}q_2(0,B) \leqslant q_2(t,B) \leqslant (1-t\cdot\phi)^{-2} q_2(0,B).$$

\begin{lem}\label{choice-lower-shift}
Let $s\in (0,1)$ and take $\delta = (1-s)^3q_2(0,B)\mathbf{1}_{\{q_1(0,B)\leqslant s\}} 
\mathbf{1}_{\{q_2(0,B)\leqslant \frac{s}{\phi}\}}$. Then $A+B\succ (l+\delta )\cdot I_n$ 
and $\phi_{l+\delta }(A+B)\leqslant \phi_l(A)$.
\end{lem}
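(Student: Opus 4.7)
The plan is to apply Lemma~\ref{lem-lower-shift} via its sufficient form (\ref{eq-condition-lower-bound}). I need to verify, on the support of the relevant indicators, that $\delta \leqslant 1/\phi$ and that the inequality
\[
\tfrac{1}{\delta} q_2(\delta,B) - q_1(\delta,B) \geqslant 1
\]
holds; I will then handle the degenerate case $\delta=0$ separately by a monotonicity argument.

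First I would dispatch the trivial case. If either indicator is zero then $\delta = 0$ and the conclusion $A+B \succ l \cdot I_n$ together with $\phi_l(A+B)\leqslant \phi_l(A)$ follows directly from $B\succeq 0$: indeed $A+B - l\cdot I_n \succeq A - l\cdot I_n \succ 0$, so $(A+B - l\cdot I_n)^{-1} \preceq (A - l\cdot I_n)^{-1}$, and taking traces gives $\phi_l(A+B) \leqslant \phi_l(A)$.

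Now assume both indicators are $1$, so $q_1(0,B)\leqslant s$ and $q_2(0,B)\leqslant s/\phi$. The definition of $\delta$ then yields
\[
\delta\ \leqslant\ (1-s)^3 \cdot \frac{s}{\phi}\ \leqslant\ \frac{s}{\phi}\ \leqslant\ \frac{1}{\phi},
\]
so in particular $\delta\phi \leqslant s$ and the shift is admissible. Using the bounds recorded just before the statement, namely
\[
q_1(\delta,B)\ \leqslant\ (1-\delta\phi)^{-1} q_1(0,B), \qquad q_2(\delta,B)\ \geqslant\ (1-\delta\phi)^{2} q_2(0,B),
\]
together with $1-\delta\phi \geqslant 1-s$, the choice $\delta = (1-s)^3 q_2(0,B)$ gives
\[
\frac{q_2(\delta,B)}{\delta}\ \geqslant\ \frac{(1-s)^2 q_2(0,B)}{(1-s)^3 q_2(0,B)}\ =\ \frac{1}{1-s},
\]
while $q_1(\delta,B) \leqslant q_1(0,B)/(1-s) \leqslant s/(1-s)$. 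Subtracting,
\[
\frac{1}{\delta}q_2(\delta,B) - q_1(\delta,B)\ \geqslant\ \frac{1}{1-s} - \frac{s}{1-s}\ =\ 1,
\]
which is exactly (\ref{eq-condition-lower-bound}). Combined with $\delta \leqslant 1/\phi \leqslant 1/\Vert L_0^{-1}\Vert$, Lemma~\ref{lem-lower-shift} delivers $A+B \succ (l+\delta) I_n$ and $\phi_{l+\delta}(A+B) \leqslant \phi_l(A)$.

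The only mildly delicate point is the bookkeeping of the exponents in $(1-s)^3 q_2(0,B)$: one wants the $(1-s)^3$ to be precisely what is consumed when passing from $q_2(0,B)$ to $q_2(\delta,B)$ through the factor $(1-\delta\phi)^2$ and then dividing by $\delta$, leaving a clean $1/(1-s)$ that absorbs the $q_1$ term. There is no real obstacle; the lemma is a direct algebraic verification once the monotonicity estimates on $q_1(\cdot,B)$ and $q_2(\cdot,B)$ are in place.
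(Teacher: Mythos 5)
Your proof is correct and follows essentially the same route as the paper: dispose of the case $\delta=0$ (where the paper simply notes $\phi_l(A+B)\leqslant\phi_l(A)$, which your operator-monotonicity remark justifies), then on the event $\{q_1(0,B)\leqslant s\}\cap\{q_2(0,B)\leqslant s/\phi\}$ use $\delta\phi\leqslant s$ together with the monotonicity bounds $q_1(\delta,B)\leqslant(1-\delta\phi)^{-1}q_1(0,B)$ and $q_2(\delta,B)\geqslant(1-\delta\phi)^{2}q_2(0,B)$ to reduce (\ref{eq-condition-lower-bound}) to the identity $\frac{1}{1-s}-\frac{s}{1-s}=1$. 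This matches the paper's computation exactly.
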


\begin{proof}
As stated before in (\ref{eq-condition-lower-bound}), it is sufficient to check that 
$\delta \leqslant \frac{1}{\phi}$ and $\frac{1}{\delta} 
q_2(\delta ,B) - q_1(\delta ,B) \geqslant 1$.\\
If $q_1(0,B)\geqslant s$ or $q_2(0,B)\geqslant 
\frac{s}{\phi}$ then $\delta =0$ and there is nothing 
to prove since $\phi_{l }(A+B)\leqslant \phi_l(A)$.\\

In the other case i.e. $q_1(0,B)\leqslant  s$ and 
$q_2(0,B)\leqslant  \frac{s}{\phi}$, we have 
$\delta = (1-s)^3q_2(0,B)$. \\So $\delta \leqslant (1-s)^3\frac{s}{\phi}\leqslant \frac{1}{\phi}$ and

\begin{align*}
\frac{1}{\delta} q_2(\delta ,B) - q_1(\delta ,B)
&=\frac{1}{(1-s)^3q_2(0,B)} q_2(\delta ,B) - q_1(\delta ,B)\\
&\geqslant \frac{1}{(1-s)^3q_2(0,B)} (1-\delta \phi )^{2}q_2(0 ,B) - (1-\delta \phi)^{-1}q_1(0 ,B)\\
&\geqslant \frac{(1-s)^2}{(1-s)^3}- \frac{s}{(1-s)}= 1.
\end{align*}
\end{proof}

\subsection{Estimating the random shift}\hfill\\

Now that we have found $\delta$, we will estimate 
$\mathbb{E} \delta$ using the property $(MWR)$. 
We will start by stating some basic facts 
about $q_1$ and $q_2$.

\begin{prop}\label{propriété-q}
Let as above $A\succ l\cdot I_n$ and $\phi_l(A)\leqslant \phi$, 
$B$ satisfying $(MWR)$ and $\mathbb{E}B=I_n$. Then we have the following :
\begin{enumerate}
  
  \item $\mathbb{E}q_1(0,B)= \phi_l(A) 
  \leqslant \phi$ and $\mathbb{E}q_1(0,B)^p 
  \leqslant C_p \phi^p$.
  
  \item $\mathbb{E}q_2(0,B)= 1 $ 
  and $\mathbb{E}q_2(0,B)^p \leqslant C_p$.
  
  \item $\mathbb{P}(q_1(0,B) \geqslant u) 
  \leqslant C_p(\frac{\phi}{u})^p$ and 
  $\mathbb{P}(q_2(0,B)\geqslant u)\leqslant \frac{C_p}{u^p}$.

\end{enumerate}
\end{prop}

\begin{proof}
Since $\mathbb{E}B=I_n$ then 
$\mathbb{E} q_1(0,B)= \phi_l(A)$ and $\mathbb{E}q_2(0,B)=1$.\\
Now using the triangle inequality and Property $(MWR)$ we get :

$$
\left(\mathbb{E}q_1(0,B)^p\right)^{\frac{1}{p}}
=\left[\mathbb{E}\left(\displaystyle \sum_{i=1}^n \frac{\left<Bv_i,v_i\right>}{\lambda_i -l}\right)^p\right]^{\frac{1}{p}}
\leqslant \sum_{i=1}^n \frac{\left( \mathbb{E}\left<Bv_i,v_i\right>^p\right)^{\frac{1}{p}}}{\lambda_i -l}
\leqslant \sum_{i=1}^n \frac{C_p^{\frac{1}{p}}}{\lambda_i -l}
\leqslant C_p^{\frac{1}{p}}\phi
$$

With the same argument we prove that 
$\mathbb{E}q_2(0,B)^p \leqslant C_p$. 
The third part of the proposition follows by Markov's inequality.
\end{proof}

\begin{lem}
If $\delta$ is as in Lemma\,\ref{choice-lower-shift}. 
Then $$\mathbb{E} \delta \geqslant (1-s)^3 
\left[ 1- 2C_p\left(\frac{\phi}{s}\right)^{p-1}\right] .$$
\end{lem}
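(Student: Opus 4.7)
The plan is to factor the deterministic constant $(1-s)^3$ out of the expectation and reduce the problem to bounding $\mathbb{E}\bigl[q_2(0,B)\,\mathbf{1}_{\{q_1(0,B)\leqslant s\}}\mathbf{1}_{\{q_2(0,B)\leqslant s/\phi\}}\bigr]$ from below by $1-2C_p(\phi/s)^{p-1}$. The natural starting point is that $\mathbb{E}q_2(0,B)=1$ by Proposition~\ref{propri�t�-q}(2), so the task becomes controlling the mass that $q_2(0,B)$ puts on the complement $\{q_1(0,B)>s\}\cup\{q_2(0,B)>s/\phi\}$.

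First I would split the complement into the two events and apply the union bound inside the expectation:
\begin{align*}
\mathbb{E}\bigl[q_2(0,B)\,\mathbf{1}_{\{q_1\leqslant s\}}\mathbf{1}_{\{q_2\leqslant s/\phi\}}\bigr]
\geqslant 1 - \mathbb{E}\bigl[q_2(0,B)\,\mathbf{1}_{\{q_1(0,B)>s\}}\bigr] - \mathbb{E}\bigl[q_2(0,B)\,\mathbf{1}_{\{q_2(0,B)>s/\phi\}}\bigr].
\end{align*}
Each of these two terms is an expectation of $q_2(0,B)$ against an indicator of an event we can estimate. The clean way forward is H\"older's inequality with exponents $p$ and $p/(p-1)$, which gives
\[
\mathbb{E}\bigl[q_2(0,B)\,\mathbf{1}_{E}\bigr]\leqslant \bigl(\mathbb{E}q_2(0,B)^p\bigr)^{1/p}\mathbb{P}(E)^{(p-1)/p}
\]
for any event $E$. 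Using Proposition~\ref{propri�t�-q}(2) to bound the moment by $C_p^{1/p}$ and part (3) to bound $\mathbb{P}(q_1(0,B)>s)\leqslant C_p(\phi/s)^p$ and $\mathbb{P}(q_2(0,B)>s/\phi)\leqslant C_p(\phi/s)^p$ (the latter because $s/\phi$ is used as the threshold and $C_p/(s/\phi)^p=C_p(\phi/s)^p$), each of the two bad terms is at most
\[
C_p^{1/p}\cdot\bigl(C_p(\phi/s)^p\bigr)^{(p-1)/p}=C_p\,(\phi/s)^{p-1}.
\]

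Summing the two bounds yields $2C_p(\phi/s)^{p-1}$; multiplying by the nonnegative constant $(1-s)^3$ gives exactly the claimed estimate. The only mild subtlety is matching the $(\phi/s)^p$ tail that arises in Proposition~\ref{propri�t�-q}(3) for $q_1$ (where the tail naturally involves the ratio $\phi/u$) with the simpler tail for $q_2$ (where the threshold $s/\phi$ produces the same exponent after substitution); once the thresholds are chosen as in Lemma~\ref{choice-lower-shift} this alignment is automatic, so no real obstacle appears. The whole argument is essentially one line of H\"older plus the two moment/tail estimates already available.
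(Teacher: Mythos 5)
Your argument is correct and follows essentially the same route as the paper: write $\mathbb{E}\delta$ as $(1-s)^3$ times $\mathbb{E}q_2(0,B)$ minus the contribution of the bad event, then control that contribution via H\"older with exponents $p$ and $p/(p-1)$ together with the moment and tail bounds of Proposition~\ref{propri�t�-q}. The only (immaterial) difference is that you apply H\"older separately to each of the two bad events and then sum, whereas the paper applies H\"older once to the indicator of their union and then uses the union bound inside the probability; both yield the stated constant $2C_p(\phi/s)^{p-1}$.
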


\begin{proof}
Using the above proposition and 
H$\overset{..}{\text{o}}$lder's inequality 
with $\frac{1}{p}+\frac{1}{q}=1$ we get :

\begin{align*}
\mathbb{E} \delta 
&= \mathbb{E} (1-s)^3q_2(0,B)\mathbf{1}_{\{q_1(0,B)\leqslant s\}} 
\mathbf{1}_{\{q_2(0,B)\leqslant \frac{s}{\phi}\}}\\
&= (1-s^3)\left[ \mathbb{E} q_2(0,B) - \mathbb{E}q_2(0,B) 	
\mathbf{1}_{\{q_1(0,B)> s\ or\  q_2(0,B)> \frac{s}{\phi}\}} \right]\\
&\geqslant (1-s)^3 \left[ 1-  \left(\mathbb{E}q_2(0,B)^p\right)^{\frac{1}{p}}. 
\left(\mathbb{P}\left\{q_1(0,B)> s\ or\  q_2(0,B)> \frac{s}{\phi}\right\}\right)^{\frac{1}{q}}\right]\\
&\geqslant (1-s)^3 \left[ 1- C_p^{\frac{1}{p}} \left( C_p\left(\frac{\phi}{s}\right)^p 
+ C_p\left(\frac{\phi}{s}\right)^p \right)^{\frac{1}{q}} \right]\\
&\geqslant (1-s)^3 \left[ 1- 2C_p\left(\frac{\phi}{s}\right)^{p-1}\right] .
\end{align*}

\end{proof}

Now it remains to make good choice of $s$ and 
$\phi$ in order to finish the prove Theorem~\ref{iteration-lower-bound}. 
Take $l'=l+\delta$, the choice of $\delta$ 
being as before with $s=\frac{\varepsilon}{4}$. \\
As we have seen, we get $A+B \succ l'\cdot I_n$ and 
$\phi_{l'}(A+B)\leqslant \phi_l(A)$. Moreover,
$$
\mathbb{E}l' = l+ \mathbb{E}\delta \geqslant 
l+(1-s)^3\left[ 1- 2C_p\left(\frac{\phi}{s}\right)^{p-1}\right]
\geqslant 1-\varepsilon,
$$
 by the assumption on $\phi$. This ends the proof of Theorem~\ref{iteration-lower-bound}.

\hfill

\section{Proof of Theorem~\ref{iteration-upper-bound}}\hfill

\subsection{Notations}\hfill\\

We work under the assumptions of Theorem~\ref{iteration-lower-bound}. 
We are looking for a random variable $u'$ of the form $u+\Delta$ 
where $\Delta$ is a positive random variable playing the role of the shift.

We will denote 
$$U_t = (u+t)\cdot I_n- A , $$ so that 
$${\rm Tr}\left(B^{\frac{1}{2}} ((u+t)\cdot I_n -A)^{-1}B^{\frac{1}{2}}\right)= \left<{U_t}^{-1},B\right>.$$

As before, $\lambda_1,..,\lambda_n$ will denote the 
eigenvalues of $A$ and $v_1,..,v_n$ the corresponding 
normalized eigenvectors. $(v_i)_{i\leqslant n}$ are also the eigenvectors
 of $U_t^{-1}$ corresponding to the eigenvalues $\frac{1}{u+t-\lambda_i}$.

\subsection{Finding the shift}\hfill\\

To find sufficient conditions for such $\Delta$ to exist, 
we need a matrix extension of Lemma~3.3 in \cite{batson-spielman-srivastava} which, up to a minor change, 
is essentially contained in Lemma\,$19$ in 
\cite{silva-harvey-sato}.
For the sake of completeness, we include the proof.

\begin{lem}\label{lem-condition-upper-bound} 
Let $A$ as above satisfying $A\prec u\cdot I_n$. 
Suppose that one can find $\Delta > 0$ verifying
\begin{equation}\label{condition-upper-bound}
\frac{\left<U_{\Delta}^{-2},B\right>}{\psi_u(A)-
\psi_{u+\Delta}(A)} + \left\Vert B^{\frac{1}{2}}U_{\Delta}^{-1}B^{\frac{1}{2}}\right\Vert \leqslant 1.
\end{equation}
Then $$A+B \prec (u+\Delta)\cdot I_n \quad \text{and} 
\quad \psi_{u+\Delta}(A+B) \leqslant \psi_u(A).$$
\end{lem}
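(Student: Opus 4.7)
The plan is to mirror, with obvious sign changes, the proof of Lemma~\ref{lem-lower-shift}. That argument used the Sherman--Morrison--Woodbury formula to expand $(L_\delta + B)^{-1}$ and took traces; here I want to do the same for $(U_\Delta - B)^{-1}$.

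First I would dispose of the strict inequality $A+B \prec (u+\Delta) I_n$, equivalently $U_\Delta - B \succ 0$. The hypothesis forces $\bigl\Vert B^{1/2} U_\Delta^{-1} B^{1/2}\bigr\Vert \leqslant 1$, and unless $B=0$ this is actually strict because the remaining term $\langle U_\Delta^{-2},B\rangle/(\psi_u(A)-\psi_{u+\Delta}(A))$ is strictly positive (observe that $\psi_u(A) > \psi_{u+\Delta}(A)$ because $\psi_u$ is decreasing in $u$, and use the same telescoping identity as the one preceding equation (\ref{eq-condition-lower-bound})). Combining $\|B^{1/2}U_\Delta^{-1}B^{1/2}\|<1$ with the similarity $U_\Delta - B = U_\Delta^{1/2}(I - U_\Delta^{-1/2} B U_\Delta^{-1/2})U_\Delta^{1/2}$ and the fact that $B^{1/2}U_\Delta^{-1}B^{1/2}$ and $U_\Delta^{-1/2} B U_\Delta^{-1/2}$ share the same nonzero eigenvalues yields $U_\Delta - B \succ 0$.

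Next I would apply Lemma~\ref{sherman-morrison} to $U_\Delta - B$, writing $-B = B^{1/2}(-I)B^{1/2}$. This gives
\[
(U_\Delta - B)^{-1} = U_\Delta^{-1} + U_\Delta^{-1} B^{1/2}\bigl(I - B^{1/2} U_\Delta^{-1} B^{1/2}\bigr)^{-1} B^{1/2} U_\Delta^{-1},
\]
where the middle inverse is well defined by the previous step. Taking traces and using cyclicity, together with the operator inequality $(I-X)^{-1} \preceq (1-\Vert X\Vert)^{-1} I$ valid for any symmetric $X$ with $\Vert X\Vert <1$, I get
\[
\psi_{u+\Delta}(A+B) \leqslant \psi_{u+\Delta}(A) + \frac{\langle U_\Delta^{-2},B\rangle}{1 - \bigl\Vert B^{1/2} U_\Delta^{-1} B^{1/2}\bigr\Vert}.
\]
Rearranging the hypothesis (\ref{condition-upper-bound}) into the form $\langle U_\Delta^{-2},B\rangle /(1 - \Vert B^{1/2} U_\Delta^{-1} B^{1/2}\Vert) \leqslant \psi_u(A) - \psi_{u+\Delta}(A)$ then yields $\psi_{u+\Delta}(A+B) \leqslant \psi_u(A)$.

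There is no real obstacle: the only delicate points are (i) checking carefully that the inner $(I-X)^{-1}$ exists, which requires the strict norm bound, and (ii) the algebraic rearrangement of (\ref{condition-upper-bound}) into the correct form, which uses that both denominators $\psi_u(A)-\psi_{u+\Delta}(A)$ and $1-\Vert B^{1/2}U_\Delta^{-1}B^{1/2}\Vert$ are positive. Both are handled in the first step above.
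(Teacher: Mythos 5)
Your proof is correct and follows essentially the same route as the paper: establish $\Vert B^{1/2}U_\Delta^{-1}B^{1/2}\Vert<1$ from the hypothesis to get $U_\Delta-B\succ 0$ (the paper uses the same equality of nonzero spectra between $B^{1/2}U_\Delta^{-1}B^{1/2}$ and $U_\Delta^{-1/2}BU_\Delta^{-1/2}$), then expand $(U_\Delta-B)^{-1}$ via Sherman--Morrison--Woodbury, take traces, bound the correction term by $\langle U_\Delta^{-2},B\rangle/(1-\Vert B^{1/2}U_\Delta^{-1}B^{1/2}\Vert)$, and rearrange the hypothesis. No gaps; your explicit checks of positivity of the two denominators only make the argument more careful than the paper's.
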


\begin{proof}
Since $\left<U_{\Delta}^{-2},B\right>$ and $\psi_u(A)-\psi_{u+\Delta}(A)$ 
are positive, then by $(\ref{condition-upper-bound})$ we have 

$$\left\Vert B^{\frac{1}{2}}U_{\Delta}^{-1}B^{\frac{1}{2}}\right\Vert < 1\quad \text{and}\quad 
\frac{\left<U_{\Delta}^{-2},B\right>}{1- \left\Vert B^{\frac{1}{2}}U_{\Delta}^{-1}B^{\frac{1}{2}}\right\Vert } 
\leqslant \psi_u(A)-\psi_{u+\Delta}(A).$$

First note that $\left\Vert B^{\frac{1}{2}}U_{\Delta}^{-1}B^{\frac{1}{2}}\right\Vert = 
\left\Vert U_{\Delta}^{-\frac{1}{2}}BU_{\Delta}^{-\frac{1}{2}}\right\Vert< 1$, \ so \ 
$U_{\Delta}^{-\frac{1}{2}}BU_{\Delta}^{-\frac{1}{2}} \prec I_n$. 
Therefore we get $B\prec U_{\Delta}$ 
which means that $A+B\prec (u+\Delta)\cdot I_n$.\\ 
Now using the Sherman-Morrison-Woodbury (see Lemma\,\ref{sherman-morrison}) 
with $E=U_{\Delta}, U=V= B^{\frac{1}{2}}, C=I_n$ we get :

\begin{align*}
 \psi_{u+\Delta}(A+B)
 &= {\rm Tr}\left(U_{\Delta}-B\right)^{-1}\\
&=\psi_{u+\Delta}(A)+ {\rm Tr}\left(U_{\Delta}^{-1}B^{\frac{1}{2}}
\left(I_n-B^{\frac{1}{2}}U_{\Delta}^{-1}B^{\frac{1}{2}}\right)^{-1}B^{\frac{1}{2}}U_{\Delta}^{-1}\right)\\
&\leqslant \psi_{u+\Delta}(A) + 
\frac{\left<U_{\Delta}^{-2},B\right>}{1-\left\Vert U_{\Delta}^{-\frac{1}{2}}BU_{\Delta}^{-\frac{1}{2}}\right\Vert}\leqslant \psi_u(A)
\end{align*}
\end{proof}

We may now find $\Delta$ satisfying $(\ref{condition-upper-bound})$. 
Let us note :

$$Q_1(t,B)= \left\Vert B^{\frac{1}{2}}U_t^{-1}B^{\frac{1}{2}}\right\Vert = \left\Vert B^{\frac{1}{2}}\left((u+t)\cdot I_n-A\right)^{-1}B^{\frac{1}{2}}\right\Vert$$
and
$$Q_2(t,B) = \frac{\left<U_t^{-2},B\right>}{\psi_u(A)-\psi_{u+t}(A)}= 
\frac{{\rm Tr}\left(B\left((u+t)\cdot I_n-A\right)^{-2}\right)}{\psi_u(A)-\psi_{u+t}(A)}.$$

Since $Q_1$ and $Q_2$ are both decreasing in $t$,
 we work with each separately. Note that by (\ref{choice-of-psi}), we have $4c\cdot \psi \leqslant 1/2$. 
 Fix $\theta \in(4c\cdot \psi,1/2)$ 
 and define $\Delta_1, \Delta_2$ as follows :

$$\Delta_1  \text{ the smallest positive number such that } 
Q_1(\Delta_1,B)\leqslant \theta$$
and
$$\Delta_2\text{ the smallest positive number such that } 
Q_2(\Delta_2,B)\leqslant 1-\theta .$$

Now take $\Delta = \Delta_1 + \Delta_2$, then 
$Q_1(\Delta,B)+ Q_2(\Delta,B) \leqslant \theta +1-\theta =1$. 
So this choice of $\Delta$ satisfies $(\ref{condition-upper-bound})$ 
and it remains now to estimate $\Delta_1$ and $\Delta_2$ separately.

\subsection{Estimating $\Delta_1$}\hfill\\

We may 
write $Q_1(\Delta_1,B) = \left\Vert\displaystyle \sum_{i=1}^n \frac{B^{\frac{1}{2}}v_iv_i^tB^{\frac{1}{2}}}{u+\Delta_1 -\lambda_i}\right\Vert$. \\

Put $\xi_i= B^{\frac{1}{2}}v_iv_i^tB^{\frac{1}{2}}$, $\mu_i= \psi (u-\lambda_i)$ and 
$\mu =\psi \Delta_1$. Denote $P_S$ the orthogonal projection on ${\rm span}\left(\{v_i\}_{i\in S}\right)$, 
clearly $rank(P_S)= \left\vert S\right\vert$. Then we have :\\

$\left\{
\begin{array}{ll}
\displaystyle
\mathbb{E}\Vert \xi_i\Vert= \mathbb{E} \langle Bv_i, v_i\rangle =1. \\ \\
\mathbb{P}\left(\left\Vert\displaystyle \sum_{i\in S} \xi_i \right\Vert\geqslant t\right)= \mathbb{P} \left(\left\Vert P_SBP_S\right\Vert\geqslant t\right)
\leqslant \frac{c}{t^{1+\eta}} \quad \forall t\geqslant c\left\vert S\right\vert .\\ \\
\displaystyle \sum_{i=1}^n \frac{1}{\mu_i}=\frac{\psi_u(A)}{\psi}\leqslant 1.\\ \\
\mu \text{ is the smallest positive number such that } \displaystyle 
\sum_{i=1}^n \frac{\xi_i}{\mu_i+\mu} \preceq \frac{\theta}{\psi}I_n .\\
\end{array}
\right.\\$

We will need an analog of Lemma\,3.5 appearing in \cite{srivastava-vershynin}. We extend 
this lemma to a matrix setting:

 \begin{lem}\label{lemme-MSR}
 Suppose $\{ \xi_i\}_{i\leqslant n}$ are symmetric positive semi-definite random 
 matrices with $\mathbb{E} \Vert \xi_i\Vert =1$ and 
 $$ \mathbb{P} \left( \left\Vert  \sum_{i\in S} \xi_i\right\Vert \geqslant t \right)
  \leqslant \frac{c}{t^{1+\eta}} \quad provided \quad t>c\vert S\vert 
  = c\sum_{i\in S} \mathbb{E} \Vert \xi_i\Vert ,$$
 for all subsets $S\subset [n]$ and some constants $c,\eta >0$.
  Consider positive numbers $\mu_i$ such that
 $$\displaystyle \sum_{i =1}^n \frac{1}{\mu_i} \leqslant 1.$$
 Let $\mu$ be the minimal positive number such that 
 $$\displaystyle \sum_{i=1}^n \frac{\xi_i}{\mu_i +\mu} \preceq K\cdot I_n,$$
 for some $K\geqslant C=4c$.\quad Then \quad
 $\mathbb{E} \mu \leqslant \frac{c(\eta)}{K^{1+\eta}}.$
 \end{lem}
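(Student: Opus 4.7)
The plan is to bound the tail $\mathbb{P}(\mu>t)$ for each $t\ge 0$ and then recover $\mathbb{E}\mu=\int_0^\infty\mathbb{P}(\mu>t)\,dt$. This is the operator-norm analogue of the argument for Lemma~3.5 of \cite{srivastava-vershynin}: I would follow the same dyadic scheme, replacing each scalar sum $\sum\eta_i$ by the operator norm $\|\sum\xi_i\|$ and applying the matrix tail hypothesis in place of its scalar version.

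\textbf{Steps 1--2 (reduction and shell decomposition).} First, by minimality of $\mu$ and left-continuity of $s\mapsto\|\sum_i\xi_i/(\mu_i+s)\|$, the event $\{\mu>t\}$ implies $\|\sum_i\xi_i/(\mu_i+t)\|>K$. Next, since $\mu_i\ge 1$ (which is forced by $\sum 1/\mu_i\le 1$), I would partition $[n]$ into dyadic shells $S_k=\{i:\,2^k(1+t)\le \mu_i+t<2^{k+1}(1+t)\}$ for $k\ge 0$. On each $S_k$ the operator inequality $\xi_i/(\mu_i+t)\preceq \xi_i/\bigl(2^k(1+t)\bigr)$ holds, and the constraint $\sum 1/\mu_i\le 1$ combined with $\mu_i<2^{k+1}(1+t)$ on $S_k$ yields $|S_k|\le 2^{k+1}(1+t)$. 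Hence
\[
\Bigl\|\sum_i\frac{\xi_i}{\mu_i+t}\Bigr\|\le \sum_{k\ge 0}\frac{1}{2^k(1+t)}\Bigl\|\sum_{i\in S_k}\xi_i\Bigr\|.
\]

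\textbf{Step 3 (pigeonhole with tuned weights).} I would pick geometric weights $w_k=(1-\gamma)\gamma^k$ with $\gamma\in(1/2,1)$ to be fixed, so that $\sum_k w_k=1$. Whenever the left-hand side exceeds $K$, pigeonhole supplies some shell $k$ with $\|\sum_{i\in S_k}\xi_i\|>w_kK\cdot 2^k(1+t)$. For the tail hypothesis to be applicable at every shell one needs $w_kK\cdot 2^k(1+t)\ge c|S_k|$, which using the $|S_k|$-bound reduces to $(1-\gamma)K(2\gamma)^k\ge 2c$ for every $k\ge 0$; since $2\gamma>1$, the binding constraint is $k=0$, namely $(1-\gamma)K\ge 2c$. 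The standing inequality $K\ge 4c$ is precisely the margin that allows a choice of $\gamma$ strictly above $1/2$ meeting this (for instance $\gamma=1-2c/K$ when $K>4c$, with the boundary handled by a limit that only affects the $\eta$-dependent constant).

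\textbf{Step 4 (union bound, integration, and the main obstacle).} The hypothesis then supplies, for each $k$, $\mathbb{P}\bigl(\|\sum_{i\in S_k}\xi_i\|>w_kK\cdot 2^k(1+t)\bigr)\le c/\bigl(w_kK\cdot 2^k(1+t)\bigr)^{1+\eta}$. Since $w_k\cdot 2^k=(1-\gamma)(2\gamma)^k$ and $2\gamma>1$, summing over $k$ gives a convergent geometric series bounded by $c(\eta)/(K(1+t))^{1+\eta}$, so $\mathbb{P}(\mu>t)\le c(\eta)/(K(1+t))^{1+\eta}$; integrating in $t$ and using $\int_0^\infty(1+t)^{-(1+\eta)}\,dt=1/\eta$ yields $\mathbb{E}\mu\le c(\eta)/K^{1+\eta}$. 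The delicate point throughout is the tuning in Step~3: the ratio $\gamma$ must simultaneously satisfy $(1-\gamma)K\ge 2c$ (so the tail hypothesis applies at the lowest shell, whose threshold is of the same order as $c|S_k|$) and $2\gamma>1$ (so the shell-wise probabilities decay geometrically instead of being summed trivially). These two constraints collide at $\gamma=1/2$, and the hypothesis $K\ge 4c$ is the minimal slack that lets them be separated; the $\eta$-dependence of $c(\eta)$ arises from the tail of the geometric series $\sum(2\gamma)^{-k(1+\eta)}$, which degrades as $2\gamma\downarrow 1$.
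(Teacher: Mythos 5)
The decomposition into dyadic shells, the union bound over shells, and the integration of the tail are all in the spirit of the paper's proof, but Step~3 contains a genuine error that breaks the argument. You need the threshold $w_kK\,2^k(1+t)$ at shell $k$ to exceed $c|S_k|$ in order to invoke the tail hypothesis, and your own bound gives $|S_k|\leqslant 2^{k+1}(1+t)$. Substituting, the sufficient condition is $w_kK\,2^k(1+t)\geqslant 2c\,2^{k}(1+t)$, i.e. $(1-\gamma)\gamma^kK\geqslant 2c$ --- the factors $2^k$ \emph{cancel}, they do not survive as the $(2\gamma)^k$ you wrote. Since $\gamma<1$, the corrected condition fails for all sufficiently large $k$, no matter how $\gamma\in(1/2,1)$ and $K$ are chosen. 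This is not a removable technicality: take all $\mu_i=n$ (so $\sum_i 1/\mu_i=1$) and $t$ small; then every index lies in the single shell $k\approx\log_2 n$, your threshold there is of order $(1-\gamma)K(1+t)\,n^{\log_2(2\gamma)}$ with $\log_2(2\gamma)<1$, hence is eventually far below $c|S_k|=cn$, and the hypothesis gives no information about that shell. The event you are union-bounding can then have probability close to $1$ (e.g. aligned rank-one $\xi_i$), so the scheme with fixed geometric weights cannot produce any nontrivial tail bound.

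The fix is precisely the point of the paper's proof: the weight assigned to shell $j$ must adapt to the shell's cardinality. The paper sets $\varepsilon_j=\frac{K}{2}\frac{n_j}{2^{j+1}}\vee\frac{K}{2a}2^{-j\eta/(2+2\eta)}$, where the first term guarantees the threshold $\varepsilon_j(2^j+t)\geqslant \frac{K}{4}n_j\geqslant c n_j$ for \emph{every} shell (this is exactly where $K\geqslant 4c$ is used), while the constraint $\sum_j n_j/2^{j+1}\leqslant 1$ keeps the total weight of these adaptive terms at most $K/2$; the second, slowly decaying term (sub-geometric, with exponent tuned to $\eta$) provides the lower bound on $\varepsilon_j$ needed so that the summed tails $\sum_j c/[\varepsilon_j(2^j+t)]^{1+\eta}$ converge and remain integrable in $t$. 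Your Steps~1, 2 and 4 are fine as written, but without replacing the weights $(1-\gamma)\gamma^k$ by size-adaptive ones, the argument does not go through.
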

  \begin{proof}[Proof]
 For any $j\geqslant 0$, denote 
 $$
 I_j=\left\{i/\ 2^j\leqslant \mu_i <2^{j+1}\right\}
 $$
 and let $n_j= \left\vert I_j\right\vert$. Note that 
 $$
 \sum_{j=0}^{+\infty} \frac{n_j}{2^{j+1}}\leqslant
  \sum_{j=0}^{+\infty} \sum_{i\in I_j} \frac{1}{\mu_i} =
  \sum_{i\leqslant n} \frac{1}{\mu_i}\leqslant 1.
 $$
 Define $\mu'$ as the minimal positive number such that 
 $$
 \forall j\geqslant 0, \quad \quad \frac{1}{2^j +\mu'}\left\Vert 
 \sum_{i\in I_j}\xi_i\right\Vert \leqslant \varepsilon_j,
 $$
 where $\varepsilon_j = \frac{K}{2} \frac{n_j}{2^{j+1}} 
 \vee \frac{K}{2a}2^{-j\frac{\eta}{2+2\eta}}$ and 
 $a = \sum_{j} 2^{-j\frac{\eta}{2+2\eta}}$.
 First note that $\mu\leqslant \mu'$; indeed, 
 \begin{align*}
\left\Vert  \sum_{i=1}^n \frac{\xi_i}{\mu_i +\mu'}\right\Vert
= \left\Vert  \sum_{j=0}^{+\infty}\sum_{i\in I_j} \frac{\xi_i}{\mu_i +\mu'}\right\Vert
&\leqslant \left\Vert\sum_{j=0}^{+\infty}  \frac{1}{2^j +\mu'}\sum_{i\in I_j} \xi_i\right\Vert \\
&\leqslant \sum_{j=0}^{+\infty} \frac{1}{2^j +\mu'}\left\Vert \sum_{i\in I_j} \xi_i\right\Vert \\
&\leqslant \sum_{j=0}^{\infty} \varepsilon_j \\
&\leqslant \frac{K}{2}+\frac{K}{2}=K,
 \end{align*}
and since $\mu$ is the minimal positive number satisfying the inequality above, then $\mu\leqslant\mu'$. 
We may now estimate $\mathbb{E}\mu'$; to this aim, we need to look at $\mathbb{P}\left\{\mu'\geqslant t\right\}$. 
For $t\geqslant 0$, 
\begin{align*}
\mathbb{P}\left\{\mu'\geqslant t\right\}
&= \mathbb{P}\left\{\exists j\geqslant 0/\ \frac{1}{2^j +t}\left\Vert 
 \sum_{i\in I_j}\xi_i\right\Vert \geqslant \varepsilon_j\right\}\\
 &\leqslant \sum_{j=0}^{+\infty}  \mathbb{P}\left\{\frac{1}{2^j +t}\left\Vert 
 \sum_{i\in I_j}\xi_i\right\Vert \geqslant \varepsilon_j\right\}\\
 &= \sum_{j=0}^{+\infty}  \mathbb{P}\left\{\left\Vert 
 \sum_{i\in I_j}\xi_i\right\Vert \geqslant \varepsilon_j(2^j+t)\right\}\\
 &\leqslant \sum_{j=0}^{+\infty}  \frac{c}{\left[\varepsilon_j(2^j+t)\right]^{1+\eta}},
\end{align*}
where the last inequality comes from the fact that $\varepsilon_j(2^j+t)\geqslant 
\frac{K}{4}n_j\geqslant c\vert I_j\vert$ and by applying the hypothesis satisfied by 
the $\xi_i$. Now since $\varepsilon_j\geqslant \frac{K}{2a}2^{-j\frac{\eta}{2+2\eta}}$, 
we have 
$$
\mathbb{P}\left\{\mu'\geqslant t\right\}
\leqslant \left(\frac{2a}{K}\right)^{1+\eta}\sum_{j=0}^{+\infty} 
\frac{c}{2^{-j\frac{\eta}{2}}\left(2^j+t\right)^{1+\eta}}
\leqslant \frac{c(\eta)}{K^{1+\eta}} \sum_{j=0}^{+\infty}\frac{1}{(2^j+t)^{1+\frac{\eta}{2}}} .
$$
Now by integration we get,
$$
\mathbb{E}\mu' = \int_0^{+\infty} \mathbb{P}\left\{\mu'\geqslant t\right\} dt
\leqslant \frac{c(\eta)}{K^{1+\eta}} \sum_{j=0}^{+\infty}\int_0^{+\infty}\frac{1}{(2^j+t)^{1+\frac{\eta}{2}}}dt
\leqslant \frac{c(\eta)}{K^{1+\eta}} .
$$

 \end{proof}

 Applying Lemma\,\ref{lemme-MSR} we get 
 $\mathbb{E} \mu \leqslant c(\eta)  
  \left(\frac{\psi}{\theta}\right)^{1+\eta}$, so that 
  \begin{equation}\label{eq-delta1}
 \mathbb{E} \Delta_1 \leqslant c(\eta)\frac{\psi^{\eta}}{\theta^{1+\eta}}.
 \end{equation}

 \subsection{Estimating $\Delta_2$}\hfill\\

 Since $$ \psi_{u}(A)- \psi_{u+t}(A)=t\cdot {\rm Tr}\left( \left(u\cdot I_n-A\right)^{-1}\left((u+t)\cdot I_n-A\right)^{-1}\right),$$ 
 we can write 
\begin{align}\label{def-P_2}
 Q_2(t,B) = \frac{ \sum_i\frac{\left< Bv_i,v_i\right>}{(u+t -\lambda_i)^2}}{t\sum_i (u+t-\lambda_i)^{-1}(u-\lambda_i)^{-1}} 
 &\leqslant \frac{1}{t}\frac{ \sum_i\frac{\left< Bv_i,v_i\right>}{(u+t -\lambda_i)(u-\lambda_i)}}{\sum_i (u+t-\lambda_i)^{-1}(u-\lambda_i)^{-1}}\\
 &:=\frac{1}{t}P_2(t,B) .\nonumber
 \end{align}
First note that $P_2(t,B)$ can be written as $\sum_i \alpha_i(t) \left<Bv_i,v_i\right>$ with $\sum_i \alpha_i(t)=1$. 
Having this in mind, one can easily check that $\mathbb{E} P_2(t,B)=1$ and 
\begin{equation}\label{eq-moment-P2}
\mathbb{E} P_2(t,B)^{1+\frac{3\eta}{4}} 
  \leqslant c(\eta),
 \end{equation}
where for the last inequality, we used the fact that $B$ satisfies $(MWR)$ with $p=1+\frac{3\eta}{4}$.\\
In order to estimate $\Delta_2$, we will divide it into two parts as follows:
$$
\Delta_2= \Delta_2 \mathbf{1}_{\{P_2(0,B)\leqslant \frac{\theta}{4\psi}\}} +
 \Delta_2 \mathbf{1}_{\{P_2(0,B)> \frac{\theta}{4\psi} \}}:= H_1 +H_2 .
 $$
  Let us start by estimating $\mathbb{E}H_1$. Suppose that $P_2(0,B) \leqslant \frac{\theta}{4\psi}$ and 
  denote $$x=(1+4\theta) P_2(0,B).$$ Since $\psi_u(A)\leqslant \psi$, we have $ (u-\lambda_i).\psi\geqslant 1\ \forall i$ 
  and therefore $
  u+x - \lambda_i \leqslant (1+x\psi)(u-\lambda_i).
  $
  This implies that 
  $$
  P_2(x,B) \leqslant (1+x\psi) P_2(0,B).$$ 
  Now write
  $$
  Q_2(x,B) \leqslant \frac{1}{x}P_2(x,B)\leqslant \frac{1+x\psi}{x}P_2(0,B) 
  \leqslant \frac{1+(1+4\theta)\frac{\theta}{4}}{1+4\theta}
  \leqslant 1-\theta,
  $$
which means that 
$$
\Delta_2 \mathbf{1}_{ \left\{ P_2(0,B)
\leqslant \frac{\theta}{4\psi}\right\}} \leqslant (1+4\theta )P_2(0,B),
$$
and therefore
\begin{equation}\label{eq-delta2-firstpart}
\mathbb{E}H_1 =\mathbb{E}\Delta_2 \mathbf{1}_{ \left\{ P_2(0,B)
\leqslant \frac{\theta}{4\psi}\right\}} \leqslant 1+4\theta .
\end{equation}
  
Now it remains to estimate $\mathbb{E}H_2$. For that we need 
to prove a moment estimate for $\Delta_2$. First observe that using (\ref{eq-moment-P2}) we have
$$
\mathbb{P}\{\Delta_2 >t\} = \mathbb{P}\{Q_2(t,B)>1-\theta\}
   \leqslant \mathbb{P}\{P_2(t,B)>t.(1-\theta)\}\leqslant \frac{c(\eta)}{t^{1+\frac{3\eta}{4}}}.
$$
By integration, this implies 
$$
\mathbb{E} \Delta_2^{1+\frac{\eta}{2}} =
   \int_0^\infty \mathbb{P} \{\Delta_2 >t\} (1+\frac{\eta}{2})t^{\frac{\eta}{2}}dt
\leqslant \int_0^1 (1+\frac{\eta}{2})t^{\frac{\eta}{2}}dt + \int_1^\infty \frac{c(\eta)}{t^{1+\frac{\eta}{4}}}
\leqslant c(\eta).
$$

Let $p'=1+\frac{\eta}{2}$, applying H$\overset{..}{\text{o}}$lder's 
inequality with $\frac{1}{p'}+\frac{1}{q'} =1$ we have :

\begin{align}\label{eq-delta2-secondpart}
\mathbb{E}H_2=\mathbb{E} \Delta_2  \mathbf{1}_{ \left\{ P_2(0,B)> \frac{\theta}{4\psi}\right\}} 
&\leqslant \left(\mathbb{E}\Delta_2^{p'}\right)^{\frac{1}{p'}}\left(\mathbb{P}\left\{P_2(0,B)> 
\frac{\theta}{4\psi}\right\}\right)^{\frac{1}{q'}}\\
&\leqslant c(\eta) \left(\left(\frac{\psi}{\theta}\right)^{1+\frac{\eta}{2}}\mathbb{E}P_2(0,B)^{1+\frac{\eta}{2}}\right)^{\frac{1}{q'}}\nonumber\\
&\leqslant c(\eta)\left(\frac{\psi}{\theta}\right)^{\frac{\eta}{2}}.\nonumber
\end{align}

Looking at (\ref{eq-delta2-firstpart}) and (\ref{eq-delta2-secondpart}) we have
$$
\mathbb{E}\Delta_2\leqslant 1+4\theta +c(\eta)\left(\frac{\psi}{\theta}\right)^{\frac{\eta}{2}}.
$$

 Putting the estimates of $\Delta_1$ and $\Delta_2$ together we deduce

$$\mathbb{E} \Delta \leqslant 1+4\theta +c(\eta)\left(\frac{\psi}{\theta}\right)^{\frac{\eta}{2}}
+c(\eta)\frac{\psi^{\eta}}{\theta^{1+\eta}} .$$

\vskip 0.2cm
We are now ready to finish the proof. 
Take $u' = u+\Delta$ with $\theta=\frac{\varepsilon}{8}$.
 Then taking $\psi= c(\eta) \varepsilon^{1+\frac{2}{\eta}}$ with the constant 
 depending on $c$ and $\eta$ properly chosen, we get $\mathbb{E}\Delta \leqslant 1+\varepsilon$.

\hfill

\section{Isotropic log-concave matrices}\hfill

A natural way to define a log-concave matrix 
is to ask that it has a log-concave distribution. 
We will define the isotropic condition as follows:

\begin{defi}\label{log-concave-defi}
Let $A$ be an $n\times m$ random matrix and denote 
by $(C_i)_{i\leqslant m}$ its columns. We will say that $A$ 
is an isotropic log-concave matrix if $A'^t= \sqrt{m}(C_1^t,..,C_m^t)$ 
is an isotropic log-concave random vector 
in $\mathbb{R}^{nm}$. 
\end{defi}

\begin{Rq}
Let $(a_{i,j})$ the entries of $A$. Saying that 
$A'$ is isotropic means that 
$$
\mathbb{E} a_{i,j}a_{k,l}= \frac{1}{m}\delta_{(i,j),(k,l)}
$$
This implies that for any $n\times m$ matrix $M$ we have 
$$
\mathbb{E} \left< A,M\right> A=
\mathbb{E}{\rm Tr}\left(A^t M\right)A = \frac{1}{m}M.
$$
  One can view this as an analogue to the isotropic condition in the vector case: in fact if 
  $A=X$ is a vector (i.e an $n\times 1$ matrix), the above condition would be 
  $$
  \mathbb{E}\left<X, y\right>X = y \quad \text{ for all } y\in\mathbb{R}^{n},
  $$ 
  which means that $X$ is isotropic in $\mathbb{R}^n$.
\end{Rq}

In \cite{paouris} and \cite{MR2833584}, Paouris established large deviation inequality and 
small ball probability estimate satisfied by an isotropic log-concave vector. Moreover, Guédon-Milman 
obtained in \cite{MR2846382} what is known as thin-shell estimate for isotropic log-concave vector. 
We will derive analogue properties for isotropic log-concave matrices using the results above.

\begin{prop}\label{log-concave-matrix-concentration}
Let $A$ be an $n\times m$ isotropic log-concave matrix and denote $B=AA^t$.
 Then for every orthogonal projection $P$ on $\mathbb{R}^n$ we have the following large deviation 
 estimate for  ${\rm Tr}(PB)$
\begin{equation}\label{eq-large-deviation-matrix}
\mathbb{P}\left\{ {\rm Tr}(PB)\geqslant c_1t\right\} \leqslant \exp{\left(-\sqrt{t.m}\right)}\quad \forall t\geqslant rank(P)
\end{equation}
and a small ball probability estimate
\begin{equation}\label{eq-small-ball-matrix}
\mathbb{P}\left\{ {\rm Tr}(PB)\leqslant c_2\varepsilon .rank(P)\right\} \leqslant \varepsilon^{c_2\sqrt{m.rank(P)}}\quad \forall \varepsilon\leqslant 1.
\end{equation}
Moreover, we also have a thin-shell estimate
\begin{equation}\label{eq-thin-shell-matrix}
\mathbb{P}\left\{\left\vert{\rm Tr}(PB)-{\rm rank}(P)\right\vert
\geqslant  t.{\rm rank}(P) \right\} \leqslant C\exp{\left(-ct^3\sqrt{m.{\rm rank}(P)}\right)}\quad \forall t\leqslant 1.
\end{equation}
\end{prop}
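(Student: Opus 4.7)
The plan is to reduce each of the three estimates to the corresponding probabilistic inequality for an isotropic log-concave vector on $\mathbb{R}^{nm}$, reusing the ``tensorized projection'' device from the proof of Proposition~\ref{vector-to-matrix}. Specifically, I work with the vector $A'^t = \sqrt{m}(C_1^t,\ldots,C_m^t) \in \mathbb{R}^{nm}$, which by Definition~\ref{log-concave-defi} is isotropic log-concave, and for any orthogonal projection $P$ on $\mathbb{R}^n$ I set $P' = I_m \otimes P$. As already observed in the proof of Proposition~\ref{vector-to-matrix}, $P'$ is an orthogonal projection of $\mathbb{R}^{nm}$ of rank $k' := m\cdot{\rm rank}(P)$, and it satisfies the key identity $\|P'A'\|_2^2 = m\cdot{\rm Tr}(PB)$. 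Since marginals of log-concave measures are log-concave and isotropy is preserved by coordinate projections, $P'A'$ may be treated as an isotropic log-concave vector on the $k'$-dimensional subspace ${\rm range}(P')$, so that the classical vector inequalities apply directly.

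For (\ref{eq-large-deviation-matrix}), I invoke Paouris' large deviation inequality from \cite{paouris}: for an isotropic log-concave vector $Y$ on a subspace of dimension $k$, $\mathbb{P}(\|Y\|_2^2 \geq c_0^2 s) \leq \exp(-\sqrt{s})$ for all $s\geq k$. Applied with $Y = P'A'$ and $s = mt$, $t\geq {\rm rank}(P)$, the identity above yields exactly $\mathbb{P}({\rm Tr}(PB)\geq c_0^2 t)\leq \exp(-\sqrt{mt})$. For (\ref{eq-small-ball-matrix}), I apply Paouris' small-ball estimate from \cite{MR2833584}, namely $\mathbb{P}(\|Y\|_2\leq \varepsilon c_0\sqrt{k})\leq \varepsilon^{c_0\sqrt{k}}$, to $Y = P'A'$; squaring the inner inequality and then renaming $\varepsilon\mapsto\sqrt{\varepsilon}$ produces the desired bound, with $c_2$ chosen small enough to absorb the factor of $2$ in the exponent.

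For (\ref{eq-thin-shell-matrix}), I use the Gu\'edon-Milman thin-shell estimate from \cite{MR2846382}: for an isotropic log-concave $Y$ on a subspace of dimension $k$, $\mathbb{P}(|\|Y\|_2-\sqrt{k}|\geq t\sqrt{k})\leq C\exp(-ct^3\sqrt{k})$ for $t\in(0,1)$. The only additional work is to translate a deviation of the squared norm into a deviation of the norm itself: for $t\in(0,1]$ the elementary inequalities $\sqrt{1+t}\geq 1+t/3$ and $\sqrt{1-t}\leq 1-t/2$ show that the event $\{|\|P'A'\|_2^2-k'|\geq tk'\}$ is contained in $\{|\|P'A'\|_2-\sqrt{k'}|\geq (t/3)\sqrt{k'}\}$; applying Gu\'edon-Milman and absorbing the harmless factor $1/27$ into $c$ finishes the proof.

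The difficulty here is essentially notational rather than conceptual: once one recognizes that the identity $\|P'A'\|_2^2 = m\cdot{\rm Tr}(PB)$ lifts every matrix-level statement to a vector-level statement on an isotropic log-concave vector in dimension $nm$, the quoted theorems of Paouris and of Gu\'edon-Milman do all of the probabilistic heavy lifting; the most delicate bookkeeping is the squared-norm to norm passage required in the thin-shell estimate.
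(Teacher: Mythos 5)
Your proposal is correct and follows essentially the same route as the paper: lift the statement to the isotropic log-concave vector $A'$ via $P' = I_m\otimes P$ and the identity $\Vert P'A'\Vert_2^2 = m\,{\rm Tr}(PB)$, then invoke Paouris' large-deviation and small-ball inequalities and the Gu\'edon--Milman thin-shell estimate. The only difference is that you spell out the (correct) squared-norm to norm translation for the thin-shell part, which the paper leaves implicit.
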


\begin{proof}[Proof]
Let $P$ be an orthogonal projection on $\mathbb{R}^n$ and 
denote $P'=I_m\otimes P$. As we have seen before ${\rm Tr}(PB)= 
\Vert PA\Vert_{\rm HS}^2 = \frac{1}{m}\Vert P'A'\Vert_2^2$ and 
$rank(P') = m.rank(P)$. Since $P'A'$ is an isotropic 
log-concave vector, then using the large deviation inequality \cite{paouris} satisfied by $P'A'$, we have
$$
\mathbb{P}\left\{ \Vert P'A'\Vert_2^2\geqslant c_1u\right\} \leqslant \exp{\left(-\sqrt{u}\right)}\quad \forall u\geqslant rank(P').
$$ 
Let $t\geqslant rank(P)$ and write $u = t.m$. Since $u\geqslant m.rank(P) =rank(P')$ we have
$$
\mathbb{P}\left\{ m .Tr(P B)\geqslant c_1t.m\right\} \leqslant \exp{\left(-\sqrt{t.m}\right)}
$$
which gives the large deviation estimate stated above.\\

For the small ball probability estimate, we apply Paouris result \cite{MR2833584} 
to $P'A'$:
$$
\mathbb{P}\left\{ \Vert P'A'\Vert_2^2\leqslant c_2\varepsilon .rank(P')\right\} 
\leqslant \varepsilon^{c_2\sqrt{rank(P')}}\quad \forall \varepsilon\leqslant 1.
$$
Writing this in terms of $B$ and $P$, we easily get the conclusion. 
Using the thin-shell estimate obtained in \cite{MR2846382} and following the same procedure as above, 
we get the last part of the proposition.
\end{proof}

\begin{Rq}
Recently, it was shown in \cite{fgp} that $s$-concave random vectors 
satisfy a thin-shell concentration similar to the log-concave one. Therefore, all results of 
this section extend to the case of $s$-concave random matrices.
\end{Rq}

In \cite{srivastava-vershynin}, it was shown that an 
isotropic log-concave vector satisfies $(SR)$ and 
we showed in Proposition~\ref{vector-to-matrix} 
how to pass from $(SR)$ to $(MSR*)$. Therefore, we may 
apply Theorem~\ref{covariance-estimate} to log-concave matrices 
and get the following:

\begin{prop}\label{prop-covariance-log-concave}
Let $A$ be an $n\times m$ isotropic log-concave matrix. 
Then $B=AA^t$ satisfies $(MSR)$. Moreover $\forall \varepsilon \in (0,1)$, 
taking $N > c(\varepsilon)n$ independent copies of $B$ we have
$$\mathbb{E} \displaystyle \left\Vert  \frac{1}{N}
\sum_{i=1}^N B_i -I_n\right\Vert \leqslant \varepsilon.
$$
\end{prop}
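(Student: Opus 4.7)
The plan is to combine three ingredients already established in the paper and invoke Theorem~\ref{covariance-estimate}. First, I would verify that $B = AA^t$ satisfies property $(MSR)$. By Definition~\ref{log-concave-defi}, the associated vector $A'$ in $\mathbb{R}^{nm}$ is isotropic and log-concave, so by the result of Srivastava--Vershynin in \cite{srivastava-vershynin} it satisfies property $(SR)$. Proposition~\ref{vector-to-matrix} then transfers $(SR)$ for $A'$ into $(MSR*)$ for $B$, and since $(MSR*)$ is pointwise stronger than $(MSR)$ (as noted right after the definition of $(MSR*)$), this yields that $B$ verifies $(MSR)$ for some constants $c, \eta > 0$.

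A self-contained alternative, which avoids the vector-level route, would be to appeal directly to Proposition~\ref{log-concave-matrix-concentration}: inequality (\ref{eq-large-deviation-matrix}) gives sub-exponential tails for ${\rm Tr}(PB)$, namely $\mathbb{P}\{{\rm Tr}(PB) \geq c_1 t\} \leq \exp(-\sqrt{tm})$ for $t \geq {\rm rank}(P)$, which trivially dominates any polynomial tail. Hence $(MSR*)$, and a fortiori $(MSR)$, holds for every fixed $\eta > 0$, with constants depending on $\eta$ and on the universal $c_1$.

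With $(MSR)$ in hand, the second assertion is an immediate application of Theorem~\ref{covariance-estimate}: fixing any $\eta > 0$ furnished by the first step and choosing $N \geq C_1\, n / \varepsilon^{\,2+2/\eta} =: c(\varepsilon)\, n$ gives the desired expectation bound $\mathbb{E}\bigl\Vert \tfrac{1}{N}\sum_i B_i - I_n\bigr\Vert \leq \varepsilon$. There is essentially no obstacle beyond bookkeeping --- the proof is a straightforward reduction to previously proven results --- the only point to keep in mind being that the value of $\eta$ is fixed once and for all in the first step, so that the quantitative dependence on $\varepsilon$ extracted from Theorem~\ref{covariance-estimate} is absorbed into the constant $c(\varepsilon)$.
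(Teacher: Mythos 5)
Your proposal is correct and matches the paper: the paper's lead-in text describes exactly your first route ($(SR)$ for $A'$ via Srivastava--Vershynin, then Proposition~\ref{vector-to-matrix} to get $(MSR*)$), while the proof actually printed in the paper uses your "alternative" route, deducing $(MSR*)$ from the sub-exponential tail (\ref{eq-large-deviation-matrix}) of Proposition~\ref{log-concave-matrix-concentration} and then applying Theorem~\ref{covariance-estimate}. Both reductions are valid and your bookkeeping of $\eta$ and $c(\varepsilon)$ is fine.
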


\begin{proof}[Proof]
Note first that since $A$ is isotropic in the sense of 
definition~\ref{log-concave-defi}, then $B = AA^t$ 
satisfies $\mathbb{E} B =I_n$.

By proposition~\ref{log-concave-matrix-concentration}, $B$ satisfies 
$$ 
\mathbb{P} \left({\rm Tr}(PB)\geqslant c_1t\right) \leqslant \exp(-\sqrt{tm})
 \quad \forall t\geqslant rank(P) \text{ and $\forall P$ 
 orthogonal projection of $\mathbb{R}^{n}$}.  
 $$
and therefore $(MSR*)$.
Applying Theorem\,\ref{covariance-estimate} we 
deduce the result.
\end{proof}
\ \\

\subsection{Eigenvalues of the empirical sum of a log-concave matrix}\hfill\\

The concentration inequalities satisfied by log-concave matrices will 
allow us obtain some results with high probability rather 
than in expectation as was the case before. Precisely, we can 
prove the following : 

\begin{theo}\label{upper-bound-log-concave}
Let $n,m$ and $N$ some fixed integers. Let $A$ be an $n\times m$ 
isotropic log-concave matrix and denote $B=AA^t$. 
For any $\varepsilon\in (0,1)$, if $m\geqslant \frac{C}{\varepsilon^6}\left[
\log (CnN)\right]^2$, then with probability $\geqslant 1- \exp (-c\varepsilon^3\sqrt{m})$ we have 
$$
 \lambda_{\max}\left(\frac{1}{N}\sum_{i=1}^N B_i\right) \leqslant  1+\varepsilon+\frac{6n}{\varepsilon N},
$$
where $(B_i)_{i\leqslant N}$ are independent copies of $B$.
\end{theo}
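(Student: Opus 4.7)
The plan is to mimic the iteration scheme underlying the proof of Theorem~\ref{upper-bound} (based on Theorem~\ref{iteration-upper-bound}), but to upgrade each \emph{expectation} bound on the random upper shift to a \emph{high-probability} bound, by exploiting the exponential concentration satisfied by isotropic log-concave matrices (Proposition~\ref{log-concave-matrix-concentration}), and then to union bound over the $N$ iteration steps.

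More precisely, I would choose $\psi := \varepsilon/6$ so that $u_0 := n/\psi = 6n/\varepsilon$ (this will give the $6n/(\varepsilon N)$ term in the final bound), initialize $A_0 := 0$, and produce inductively a sequence of upper barriers $u_i$ with
\[
A_i := \sum_{j\leqslant i} B_j \prec u_i\cdot I_n, \qquad \psi_{u_i}(A_i)\leqslant \psi,
\]
such that $u_i - u_{i-1} \leqslant 1 + c\varepsilon$ holds with probability $\geqslant 1 - \exp(-c'\varepsilon^3\sqrt{m})/N$, conditionally on $A_{i-1}$. As in Section 7, the $i$-th shift is $\Delta^{(i)} = \Delta_1^{(i)} + \Delta_2^{(i)}$, with threshold $\theta$ of order $\varepsilon$, and each piece must be bounded in high probability.

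For $\Delta_1^{(i)}$, I would revisit the dyadic construction of Lemma~\ref{lemme-MSR}, replacing the polynomial tail estimate on $\|\sum_{i\in I_j}\xi_i\| = \|P_{I_j}B P_{I_j}\|$ by the log-concave large deviation \eqref{eq-large-deviation-matrix}. Because the number of nonempty dyadic levels $I_j$ is $O(\log n)$, a union bound over $j$ with an appropriately tuned sequence $\varepsilon_j$ yields $\Delta_1^{(i)} \leqslant c\varepsilon$ except on an event of probability at most $C\log(n)\cdot \exp(-c\varepsilon\sqrt{m})$. For $\Delta_2^{(i)}$, I would control $P_2(0,B_i) = \langle D_i, B_i\rangle/\langle D_i, I_n\rangle$, where $D_i := (u_{i-1}I_n - A_{i-1})^{-2}$ depends only on $A_{i-1}$. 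Writing $\langle D_i, B_i\rangle = \|D_i^{1/2}A_i\|_{\rm HS}^2$, this is the squared Euclidean norm of a (non-isotropic) log-concave vector in $\mathbb{R}^{nm}$, with mean $\langle D_i, I_n\rangle$. Applying a thin-shell-type concentration---either by extending \eqref{eq-thin-shell-matrix} via a spectral decomposition of $D_i$ into a superposition of scaled projections onto eigenspaces, or directly from the Gu\'edon--Milman thin-shell after renormalization---one obtains $P_2(0,B_i)\leqslant 1 + c\varepsilon$ with failure probability at most $C\exp(-c\varepsilon^3\sqrt{m})$, which, as in \eqref{eq-delta2-firstpart}, forces $\Delta_2^{(i)} \leqslant 1 + O(\varepsilon)$.

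A union bound over $i = 1,\ldots,N$ then produces a single event of probability $\geqslant 1 - NC\exp(-c\varepsilon^3\sqrt{m})$ on which
\[
\lambda_{\max}\!\left(\sum_{i=1}^N B_i\right) \leqslant u_N \leqslant u_0 + N(1+c\varepsilon) = \frac{6n}{\varepsilon} + N(1 + c\varepsilon),
\]
which, after dividing by $N$ and renaming constants, gives the claimed bound. The hypothesis $m\geqslant C\varepsilon^{-6}(\log(CnN))^2$ is exactly what is needed to absorb the $\log n$ overhead from the $\Delta_1^{(i)}$ step and the factor $N$ from the union bound into the exponent $c\varepsilon^3\sqrt{m}$, so that the final failure probability is $\exp(-c\varepsilon^3\sqrt{m})$. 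The main obstacle I foresee is the high-probability bound on $\Delta_2^{(i)}$: we need a thin-shell inequality for $\langle D, B\rangle$ with $D$ an arbitrary positive semidefinite matrix, not merely a projection, and the exponent $t^3$ in the thin-shell estimate is precisely what dictates the $\varepsilon^{-6}$ scaling in the condition on $m$ and the $\varepsilon^3$ factor in the final probability---so matching these scalings across the two sub-steps is the delicate part of the argument.
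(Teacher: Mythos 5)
Your global strategy is the right one (barrier iteration with $\psi=\varepsilon/6$, high-probability control of each step, union bound over the $N$ steps, and $m\geqslant C\varepsilon^{-6}[\log(CnN)]^2$ to absorb the union bound into the exponent), but you import far more of the Section~7 machinery than is needed, and the piece you yourself flag as the main obstacle is exactly the piece the paper's proof is designed to avoid. The paper does \emph{not} split the shift into $\Delta_1+\Delta_2$: it takes the shift \emph{deterministic}, $\Delta=1+\varepsilon$, and shows that the sufficient condition $Q_1(\Delta,B_{i+1})+Q_2(\Delta,B_{i+1})\leqslant 1$ of Lemma~\ref{lem-condition-upper-bound} fails with probability at most $2Cn\exp(-c\varepsilon^3\sqrt m)$. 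The mechanism is a single reduction to rank-one quadratic forms: $Q_1$ is bounded by its trace, $Q_1(\Delta,B)\leqslant\sum_j\langle Bv_j,v_j\rangle/(u_{i+1}-\lambda_j)$, which is at most $1-\alpha$ as soon as every $\langle Bv_j,v_j\rangle\leqslant(1-\alpha)/\psi$; and $Q_2(\Delta,B)\leqslant\Delta^{-1}P_2(\Delta,B)$ where $P_2$ is a convex combination of the $\langle Bv_j,v_j\rangle$, hence at most $\alpha=(1+\varepsilon/2)/(1+\varepsilon)$ as soon as every $\langle Bv_j,v_j\rangle\leqslant1+\varepsilon/2$. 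Both reductions only require the rank-one thin-shell estimate (\ref{eq-thin-shell-matrix}) together with a union bound over the $n$ eigenvectors, which is precisely where the $\log(CnN)$ in the hypothesis on $m$ comes from. In particular the dyadic construction of Lemma~\ref{lemme-MSR} is not needed at all here, because with $\psi$ of constant order in $\varepsilon$ the crude bound $\Vert B^{1/2}U_\Delta^{-1}B^{1/2}\Vert\leqslant{\rm Tr}(BU_\Delta^{-1})$ is affordable.

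As written, your treatment of $\Delta_2$ is a genuine gap: a thin-shell inequality for $\langle D,B\rangle$ with $D$ an arbitrary positive semidefinite matrix is not provided by the paper, and the cited Gu\'edon--Milman result is stated for the isotropic (projection) case, so the ``renormalization'' route would require a separate anisotropic concentration statement that you do not supply. Of the two fixes you float, the spectral-decomposition one is the correct instinct --- and if you push it all the way down to rank-one projections it \emph{is} the paper's argument, at which point the convexity of the weights $\alpha_j(t)$ makes any concentration of the weighted sum unnecessary: a pointwise bound on each $\langle Bv_j,v_j\rangle$ suffices. Committing to that observation at the outset collapses both of your sub-steps into one union bound and removes the need to track random shifts $\Delta_1^{(i)},\Delta_2^{(i)}$ altogether.
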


\begin{proof}[Proof]

The proof of Theorem\,\ref{upper-bound-log-concave} follows the same 
ideas developed in the previous sections. Let $\varepsilon\in (0,1)$, we only need the following 
property satisfied by our matrix $B= AA^t$:
$$ 
\mathbb{P} \left(\left<Bx,x\right> \geqslant 1+\frac{\varepsilon}{2}\right) 
\leqslant \exp(-c\varepsilon^3\sqrt{m}) \quad \forall x\in \mathbb{S}^{n-1}.
$$
This is obtained by applying (\ref{eq-thin-shell-matrix}) 
for rank $1$ projections and looking only at the large deviation part. 

Define $\Delta$, $\psi$ and $\alpha$ as follows:
$$
\Delta = 1+\varepsilon; \quad \psi =\frac{\varepsilon}{6}
 \quad \text{ and } \quad \alpha =\frac{1+\frac{\varepsilon}{2}}{1+\varepsilon}.
$$
Recall some notations 
$$A_0= 0,\ A_1=B_1,\ A_2=A_1+B_1,\ ..,\ A_N= A_{N-1} +B_N 
=\sum_{i=1}^N B_i.
$$ 
Denote 
$$
u_0= \frac{n}{\psi},\ u_1=u_0+\Delta,\ u_2=u_1+\Delta,\ ..,\ u_N=u_{N-1}+\Delta 
=(1+\varepsilon)N + \frac{6n}{\varepsilon}.
$$  
Define 
$$
\psi_{u_i}(A_i) = {\rm Tr}\left(u_i.I_n-A_i\right)^{-1},
$$ the corresponding 
potential function when $A_i \prec u_i.I_n$. Denote by $\Im_i$ the 
event 
$$
\Im_i :=``A_i\prec u_i.I_n \quad \text{ and }\quad \psi_{u_i}(A_i)\leqslant \psi ".
$$
Clearly $\mathbb{P}\left(\Im_0\right)=1$. Suppose now that $\Im_i$ 
is satisfied; as we have seen in Lemma~\ref{lem-condition-upper-bound}, 
the following condition is 
sufficient for the occurrence of the event $\Im_{i+1}$  :
$$
Q_2(\Delta,B_{i+1}) +Q_1(\Delta,B_{i+1}) \leqslant 1.
$$
Note that 
$$
Q_2(\Delta,B_{i+1})\leqslant \frac{1}{\Delta} P_2(\Delta,B_{i+1}),
$$ where 
$P_2$ is defined in (\ref{def-P_2}) but with $A_i$ instead of $A$. Now denoting $\lambda_j$ the eigenvalues of $A_{i}$ and $v_j$ 
the corresponding eigenvectors, taking the probability with respect to $B_{i+1}$ one can write 
\begin{align*}
&\mathbb{P}_{B_{i+1}}\left\{Q_2(\Delta,B_{i+1}) +Q_1(\Delta,B_{i+1})> 1\right\}
\leqslant
\mathbb{P}_{B_{i+1}}\left\{\frac{1}{\Delta} P_2(\Delta,B_{i+1}) +Q_1(\Delta,B_{i+1})> 1\right\}\\
&\leqslant \mathbb{P}_{B_{i+1}}\left\{\frac{1}{\Delta} P_2(\Delta,B_{i+1}) >\alpha\right\}
+\mathbb{P}_{B_{i+1}}\left\{ Q_1(\Delta,B_{i+1})>1-\alpha\right\}\\
&\leqslant  \displaystyle\mathbb{P}_{B_{i+1}}\left\{\sum_{j=1}^n 
\frac{\left<B_{i+1}v_j,v_j\right>}{(u_{i+1}-\lambda_j)(u_i-\lambda_j)} > \alpha \Delta
\sum_{j=1}^n \frac{1}{(u_{i+1}-\lambda_j)(u_i-\lambda_j)}\right\}\\
&\quad +\mathbb{P}_{B_{i+1}}\left\{ \sum_{j=1}^n \frac{\left<B_{i+1}v_j,v_j\right>}{u_{i+1}-\lambda_j}>1-\alpha\right\}\\
&\leqslant  \displaystyle\mathbb{P}_{B_{i+1}}\left\{\exists j\leqslant n\mid \left<B_{i+1}v_j,v_j\right>  > 1+\frac{\varepsilon}{2} \right\}
+\mathbb{P}_{B_{i+1}}\left\{ \exists j\leqslant n\mid \left<B_{i+1}v_j,v_j\right> >\frac{1-\alpha}{\psi}\right\}\\
&\leqslant 2Cn.\exp(-c\varepsilon^3\sqrt{m}).
\end{align*}
Keeping in mind that $B_i$ are independent, we have shown that $$\mathbb{P}\left(\Im_{i+1}\vert \Im_i\right) 
\geqslant 1-2Cn.\exp(-c\varepsilon^3\sqrt{m}).$$ Moreover, we have 

\begin{align*}
\mathbb{P}\displaystyle \left\{ \lambda_{\max}\left( \frac{1}{N}\sum_{i=1}^N B_i\right) 
\leqslant 1+\varepsilon+\frac{6n}{\varepsilon N}\right\}&\geqslant \mathbb{P}\left( \Im_N \right)\\
 &\geqslant \mathbb{P}\left( \Im_N \vert \Im_{N-1}\right)
 \mathbb{P}\left( \Im_{N-1} \vert \Im_{N-2}\right)\cdot \cdot\cdot\mathbb{P}\left(\Im_0\right)\\
&\geqslant 1-2CNn.\exp(-c\varepsilon^3\sqrt{m}).
\end{align*}
Therefore, Theorem\,\ref{upper-bound-log-concave} follows by the choice of $m$.
\end{proof}

\begin{Rq}\label{Rq-upper-bound-log-concave}
Note that in the previous proof, we only used the large deviation 
inequality given by the thin-shell estimate (\ref{eq-thin-shell-matrix}). 
If one uses the deviation inequality given by (\ref{eq-large-deviation-matrix}), 
then by the same proof it can be proved that 
$$
 \lambda_{\max}\left(\frac{1}{N}\sum_{i=1}^N B_i\right) \leqslant  C(1+\frac{n}{N}),
$$
with high probability and with similar condition on $m$. The advantage of using 
thin-shell is that we can get an estimate close to $1$.
\end{Rq}

By the same techniques, we also get an estimate of the smallest 
eigenvalue.

\begin{theo}\label{lower-bound-log-concave}
Let $n, m$ and $N$ some fixed integers. Let $A$ be an $n\times m$ 
isotropic log-concave matrix and denote $B=AA^t$. 
For any $\varepsilon\in (0,1)$, if $m\geqslant \frac{C}{\varepsilon^6}\left[
\log (CnN)\right]^2$, then 
with probability $\geqslant 1-\exp(-c\varepsilon^3\sqrt{m})$ we have
$$
\lambda_{\min}\left(\frac{1}{N}\sum_{i=1}^N B_i\right) \geqslant 1-\varepsilon -\frac{3n}{\varepsilon N},
$$
where $(B_i)_{i\leqslant N}$ are independent copies of $B$.
\end{theo}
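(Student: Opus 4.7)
The plan is to adapt the argument proving Theorem~\ref{upper-bound-log-concave}, replacing the upper-barrier iteration of Section 7 with the lower-barrier iteration of Section 6. Fix deterministic parameters $\delta = 1-\varepsilon$ and $\phi = \varepsilon/3$, set $A_0 = 0$, $A_{i+1} = A_i + B_{i+1}$, $l_0 = -n/\phi = -3n/\varepsilon$, and $l_{i+1} = l_i + \delta$. A direct computation gives $l_N = (1-\varepsilon)N - 3n/\varepsilon$, so after normalizing by $N$ this is exactly the lower bound in the statement. Introduce the events
$$
\Im_i := \left\{A_i \succ l_i\cdot I_n \text{ and } \phi_{l_i}(A_i)\leqslant \phi\right\},
$$
with $\mathbb{P}(\Im_0) = 1$.

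The one-step implication $\Im_i \Rightarrow \Im_{i+1}$ will follow from Lemma~\ref{lem-lower-shift} via the sufficient condition~(\ref{eq-condition-lower-bound}) at the eigendata $(\lambda_j, v_j)$ of $A_i$. The inequality $\delta\leqslant 1/\phi$ is immediate. The main inequality
$$
\frac{1}{\delta}q_2(\delta, B_{i+1}) - q_1(\delta, B_{i+1}) \geqslant 1
$$
will be checked under the event that $|\langle B_{i+1}v_j, v_j\rangle - 1|\leqslant \varepsilon/10$ for all $j\leqslant n$: writing $q_2$ as a convex combination of the quadratic forms $\langle Bv_j, v_j\rangle$ gives $q_2(\delta, B_{i+1})\geqslant 1-\varepsilon/10$, while the monotonicity estimates recalled before Lemma~\ref{choice-lower-shift}, combined with $\phi_{l_i}(A_i)\leqslant \phi$, give $q_1(\delta, B_{i+1})\leqslant (1+\varepsilon/10)\phi/(1-\delta\phi)$. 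Since $\delta\phi\leqslant \varepsilon/3$, a short direct calculation shows that the required inequality holds with a positive margin throughout $(0,1)$.

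The concentration input is the large-deviation half of the thin-shell inequality~(\ref{eq-thin-shell-matrix}) applied to the rank-one projection $v_jv_j^t$: for every $x\in\mathbb{S}^{n-1}$,
$$
\mathbb{P}\bigl(|\langle Bx,x\rangle - 1|\geqslant \varepsilon/10\bigr) \leqslant C\exp(-c\varepsilon^3\sqrt m).
$$
A union bound over the $n$ eigenvectors of $A_i$ then yields $\mathbb{P}(\Im_{i+1}\mid \Im_i)\geqslant 1 - Cn\exp(-c\varepsilon^3\sqrt m)$. Independence of the $B_i$ together with a telescoping product over $0\leqslant i<N$, exactly as in the proof of Theorem~\ref{upper-bound-log-concave}, gives $\mathbb{P}(\Im_N)\geqslant 1 - CNn\exp(-c\varepsilon^3\sqrt m)$; the assumption $m\geqslant C\varepsilon^{-6}[\log(CnN)]^2$ then absorbs the factor $CNn$ into the exponent, yielding the stated probability $1-\exp(-c\varepsilon^3\sqrt m)$.

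The main obstacle is the tightness of the constants: since the total shift must reach $(1-\varepsilon)N$ and the initial offset is at most $3n/\varepsilon$, the slack in the deterministic inequality $q_2/\delta - q_1\geqslant 1$ is only of order $\varepsilon$, which forces the thin-shell deviation threshold to be a sufficiently small constant multiple of $\varepsilon$; any such choice works and only affects the universal constant in the exponent $c\varepsilon^3\sqrt m$.
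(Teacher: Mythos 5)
Your proposal is correct and follows essentially the same route as the paper: the same barrier iteration with $\delta=1-\varepsilon$, $\phi=\varepsilon/3$, the sufficient condition~(\ref{eq-condition-lower-bound}), the two-sided rank-one thin-shell estimate~(\ref{eq-thin-shell-matrix}), a union bound over the $n$ eigenvectors of $A_i$, and a telescoping conditional-probability product absorbed by the hypothesis on $m$. The only differences are cosmetic (you condition on a single uniform event $|\langle B_{i+1}v_j,v_j\rangle-1|\leqslant\varepsilon/10$, whereas the paper splits the failure event into a small-ball part for $q_2$ and a large-deviation part for $q_1$ via the auxiliary parameter $\alpha$).
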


\begin{proof}[Proof]
Here we will use the lower and the upper estimate given by thin-shell (\ref{eq-thin-shell-matrix}). 
Applying (\ref{eq-thin-shell-matrix}) for rank $1$ projections, we have:
$$
\mathbb{P} \left(1-\frac{\varepsilon}{2}\leqslant\left<Bx,x\right> \leqslant 1+\frac{\varepsilon}{2}\right) 
\geqslant 1-C\exp(-c\varepsilon^3\sqrt{m}) \quad \forall x\in \mathbb{S}^{n-1}.
$$
Define $\delta$, $\phi$ and $\alpha$ as follows:
$$
\delta = 1-\varepsilon; \quad \phi =\frac{\varepsilon}{3}
 \quad \text{ and } \quad \alpha =\frac{1-\frac{\varepsilon}{2}}{1-\varepsilon}.
$$
Recall some notations 
$$
A_0= 0,\ A_1=B_1,\ A_2=A_1+B_1,\ ..,\ A_N= A_{N-1} +B_N 
= \sum_{i=1}^N B_i.
$$ Denote 
$$
l_0= -\frac{n}{\phi},\ l_1=l_0+\delta,\ l_2=l_1+\delta,\ ..,\ l_N=l_{N-1}+\delta = N(1-\varepsilon) - \frac{3n}{\varepsilon}.
$$ Define 
$$
\phi_{l_i}(A_i) = {\rm Tr}\left(A_i-l_i.I_n\right)^{-1},
$$ the corresponding 
potential function when $A_i \succeq l_i.I_n$. 
Note also that $\delta\leqslant \frac{1}{\phi}$.\\
Denote by $\Im_i$ the 
event 
$$
\Im_i:=``A_i\succeq l_i.I_n \quad \text{ and }\quad \phi_{l_i}(A_i)\leqslant \phi ".
$$

Clearly $\mathbb{P}\left(\Im_0\right)=1$. Suppose now that $\Im_i$ 
is satisfied, following what was done after Lemma~\ref{lem-lower-shift}, 
condition (\ref{eq-condition-lower-bound}) is 
sufficient for the occurrence of the event $\Im_{i+1}$  :
$$
\frac{1}{\delta} q_2(\delta,B_{i+1}) -q_1(\delta,B_{i+1}) \geqslant 1
$$

Denoting $\lambda_j$ the eigenvalues of $A_{i}$ and $v_j$ 
the corresponding eigenvectors, taking the probability with 
respect to $B_{i+1}$ one can write :

\begin{align*}
&\mathbb{P}_{B_{i+1}}\left\{\frac{1}{\delta} q_2(\delta,B_{i+1}) -q_1(\delta,B_{i+1})< 1\right\}
\leqslant \\
&\leqslant\mathbb{P}_{B_{i+1}}\left\{\frac{1}{\delta} q_2(\delta,B_{i+1}) <\alpha\right\}
+\mathbb{P}_{B_{i+1}}\left\{q_1(\delta,B_{i+1})>\alpha -1\right\}\\
&\leqslant  \displaystyle\mathbb{P}_{B_{i+1}}\left\{\sum_{j=1}^n \frac{\left<B_{i+1}v_j,v_j\right>}{(\lambda_j-l_{i+1})^2} <\alpha \delta 
\sum_{j=1}^n \frac{1}{(\lambda_j-l_{i+1})^2}\right\}
 +\mathbb{P}_{B_{i+1}}\left\{ \sum_{j=1}^n \frac{\left<B_{i+1}v_j,v_j\right>}{\lambda_j-l_{i+1}}>\alpha -1\right\}\\
&\leqslant  \displaystyle\mathbb{P}_{B_{i+1}}\left\{\exists j\leqslant n\mid \left<B_{i+1}v_j,v_j\right> < 1-\frac{\varepsilon}{2} \right\}
+\mathbb{P}_{B_{i+1}}\left\{ \exists j\leqslant n\mid \left<B_{i+1}v_j,v_j\right> >\frac{\alpha -1}{\phi}\right\}\\
&\leqslant 2Cn.\exp(-c\varepsilon^3\sqrt{m}).
\end{align*}
Keeping in mind that $B_i$ are independent, we have shown that $$\mathbb{P}\left(\Im_{i+1}\vert \Im_i\right) 
\geqslant 1-2Cn.\exp(-c\varepsilon^3\sqrt{m}).$$ Moreover, we have 

\begin{align*}
\mathbb{P}\displaystyle \left\{ \lambda_{\min}\left( \frac{1}{N}\sum_{i=1}^N B_i\right)
\geqslant 1-\varepsilon -\frac{3n}{\varepsilon N}\right\}&\geqslant \mathbb{P}\left( \Im_N \right)\\
 &\geqslant \mathbb{P}\left( \Im_N \vert \Im_{N-1}\right)
 \mathbb{P}\left( \Im_{N-1} \vert \Im_{N-2}\right)\cdot \cdot\cdot\mathbb{P}\left(\Im_0\right)\\
&\geqslant 1-2CNn.\exp(-c\varepsilon^3\sqrt{m}).
\end{align*}
Therefore, Theorem\,\ref{lower-bound-log-concave} follows by the choice of $m$.\\

\end{proof}

\begin{Rq}\label{Rq-lower-bound-log-concave}
Note that in the previous proof, we used the large deviation 
inequality alongside the small ball probability estimate 
given by thin-shell (\ref{eq-thin-shell-matrix}). 
If one uses the deviation inequality given by (\ref{eq-large-deviation-matrix}) 
alongside the small ball probability estimate given by (\ref{eq-small-ball-matrix}),
then by the same proof it can be proved that 
$$
 \lambda_{\min}\left(\frac{1}{N}\sum_{i=1}^N B_i\right) \geqslant  -c+C\frac{n}{N},
$$
with high probability and with similar condition on $m$. The advantage of using 
thin-shell is that we can get an estimate close to $1$.
\end{Rq}

Combining the two previous results, we will be able to 
obtain, with high probability, a similar result to Proposition~\ref{prop-covariance-log-concave} for 
log-concave matrices: 

\begin{theo}\label{th-covariance-log-concave}
Let $A$ be an $n\times m$ isotropic log-concave matrix. 
For any $\varepsilon \in (0,1)$, if $m\geqslant \frac{C}{\varepsilon^6}\left[
\log (\frac{Cn}{\varepsilon})\right]^2$, taking 
$$
N\geqslant \frac{96n}{\varepsilon^2}
$$
copies $A_1,...,A_N$ of $A$, then with probability 
$\geqslant 1-\exp(-c\varepsilon^3\sqrt{m})$ we have 
$$
\left\Vert\frac{1}{N}\sum_{i=1}^N A_iA_i^t-I_n\right\Vert\leqslant \varepsilon .
$$
\end{theo}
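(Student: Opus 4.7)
The plan is to deduce the statement directly from Theorem~\ref{upper-bound-log-concave} and Theorem~\ref{lower-bound-log-concave} applied to $B_i:=A_iA_i^t$ with a smaller accuracy parameter $\varepsilon':=\varepsilon/2$, together with the elementary observation that $\|M-I_n\|\leqslant \varepsilon$ whenever $1-\varepsilon\leqslant \lambda_{\min}(M)\leqslant \lambda_{\max}(M)\leqslant 1+\varepsilon$. In particular, the additive decomposition $\alpha+\beta$ used in the proof of Theorem~\ref{covariance-estimate} is unnecessary here, since we are working with almost-sure two-sided eigenvalue control on a high-probability event rather than with expectations.

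The first step is to check that the present hypothesis $m\geqslant C\varepsilon^{-6}[\log(Cn/\varepsilon)]^2$ is strong enough to invoke the two previous theorems with $\varepsilon'$ in place of $\varepsilon$. Since we may restrict to $N$ of order $n/\varepsilon^2$, the quantity $\log(CnN)$ appearing in their hypotheses is comparable to $\log(Cn/\varepsilon)$, and the multiplicative factor $2^6$ coming from $\varepsilon'^{-6}$ is absorbed into the universal constant $C$.

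The second step is the application itself. Theorem~\ref{upper-bound-log-concave} with $\varepsilon'$ yields, on an event of probability at least $1-\exp(-c\varepsilon^3\sqrt{m})$,
$$
\lambda_{\max}\Bigl(\tfrac{1}{N}\sum_{i=1}^N B_i\Bigr)\leqslant 1+\varepsilon'+\tfrac{6n}{\varepsilon' N}\leqslant 1+\tfrac{\varepsilon}{2}+\tfrac{\varepsilon}{8}<1+\varepsilon,
$$
where the bound on the residual term uses $N\geqslant 96n/\varepsilon^2$. Applying Theorem~\ref{lower-bound-log-concave} in the same way yields $\lambda_{\min}\geqslant 1-\varepsilon$ on another event of the same probability. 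Intersecting the two events — and adjusting the constant in front of $\varepsilon^3\sqrt{m}$ to swallow the factor of two from the union bound — gives, on an event of probability at least $1-\exp(-c\varepsilon^3\sqrt{m})$, every eigenvalue of $\frac{1}{N}\sum_i B_i-I_n$ in the interval $[-\varepsilon,\varepsilon]$, hence the desired operator-norm bound.

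There is no conceptual obstacle: the nontrivial work has already been carried out in Theorem~\ref{upper-bound-log-concave} and Theorem~\ref{lower-bound-log-concave}. The only care required is constant bookkeeping, namely choosing $\varepsilon'$ small enough that $\varepsilon'+6n/(\varepsilon' N)\leqslant \varepsilon$ under $N\geqslant 96n/\varepsilon^2$, and confirming that the $m$-hypothesis phrased in terms of $\log(Cn/\varepsilon)$ is strong enough to imply the one phrased in terms of $\log(CnN)$ at the value of $N$ we use.
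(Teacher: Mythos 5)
Your proposal is correct and follows essentially the same route as the paper: a union bound combining Theorem~\ref{upper-bound-log-concave} and Theorem~\ref{lower-bound-log-concave} at a rescaled accuracy parameter, with the residual terms $6n/(\varepsilon' N)$ and $3n/(\varepsilon' N)$ absorbed using $N\geqslant 96n/\varepsilon^2$. The only difference is cosmetic: the paper fixes the target accuracy as $4\varepsilon$ and applies the two theorems at level $2\varepsilon$, whereas you fix the target as $\varepsilon$ and work at level $\varepsilon/2$; your bookkeeping of the constants and of the $\log(CnN)$ versus $\log(Cn/\varepsilon)$ hypothesis is, if anything, more explicit than the paper's.
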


\begin{proof}[Proof]
Let $\varepsilon\in (0,1)$,  $N\geqslant \frac{6n}{\varepsilon^{2}}$ and suppose 
that $m$ satisfies the assumption of the theorem. 
Note that
\begin{align*}
\mathbb{P}\left\{ \left\Vert\frac{1}{N}\sum_{i=1}^N A_iA_i^t-I_n\right\Vert > 4\varepsilon\right\}
&\leqslant \mathbb{P}\left\{ \lambda_{\max}\left(\frac{1}{N}\sum_{i=1}^N A_iA_i^t\right) > 1+2\varepsilon\right\}\\
&\quad + \mathbb{P}\left\{ \lambda_{\min}\left(\frac{1}{N}\sum_{i=1}^N A_iA_i^t\right) < 1-2\varepsilon\right\}
\end{align*}
and therefore it is sufficient to apply Theorem~\ref{upper-bound-log-concave} 
and Theorem~\ref{lower-bound-log-concave}.
\end{proof}

\subsection{Concrete examples of isotropic log-concave matrices}\label{section-exemple-log-concave}\hfill\\
For $x\in\mathbb{R}^k$, 
we denote by $x^*$ the vector with components $\vert x_i\vert $
arranged in nonincreasing order. Let $f: \mathbb{R}^k \longrightarrow \mathbb{R}$, 
we say that $f$ is absolutely symmetric 
if $f(x)=f(x^*)$ for all $x\in\mathbb{R}^k$. (For example, $\Vert \cdot\Vert_p$ 
is absolutely symmetric).\\
Define $F$ a function on $\mathbb{M}_{n,m}$ by $F(A)= f\left(s_1(A),..,s_k(A)\right)$ 
 for $A\in\mathbb{M}_{n,m}$ and $k=\min( n,m)$. It was shown by Lewis 
  \cite{MR1363368} that $f$ is absolutely symmetric if and only if $F$ is unitary 
 invariant. Moreover, $f$ is convex if and only if $F$ is convex.\\  
 Let $A$ be an $n\times m$ random 
 matrix whose density with respect to Lebesgue 
 measure is given by $G(A)=\displaystyle 
 \exp\left(-f\left(s_1(A),..,s_k(A)\right)\right)$, 
 where $f$ is an absolutely symmetric convex function. By the remark above, $G$
  is log-concave. This covers the case of random matrices with
  density of the form $\exp\left( -\sum_i V\left(s_i(A)\right)\right)$, where $V$ is an 
  increasing convex function on $\mathbb{R}^+$. When $V(x)=x^2$, this would be
   the gaussian unitary ensemble GUE.\\
   
\begin{prop}
Let $A$ be an $n\times m$ random matrix whose density with respect to Lebesgue 
is given by
$$
G(A)=\exp\left(-f(s_1(A),...,s_k(A))\right),
$$
where $f$ is an absolutely symmetric convex function and $k=\min(n,m)$. Suppose that 
$\mathbb{E} \Vert A\Vert_{HS}^2 =n$, then 
$A$ is an isotropic log-concave matrix, and $\sqrt{\frac{n}{m}}A^t$ is an $m\times n$ isotropic 
log-concave matrix.
\end{prop}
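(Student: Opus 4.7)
The proof splits cleanly into two independent tasks: establishing log-concavity of the density, and verifying the isotropy condition required by Definition~\ref{log-concave-defi}. I would begin with log-concavity. The density $G(A)=\exp(-F(A))$ with $F(A)=f(s_1(A),\ldots,s_k(A))$ is log-concave on $\mathbb{M}_{n,m}\simeq\mathbb{R}^{nm}$ precisely when $F$ is convex. This is exactly Lewis's characterization already cited before the proposition: $f$ absolutely symmetric and convex implies $F$ unitarily invariant and convex. Transposition preserves singular values and is a linear isometry of $\mathbb{R}^{nm}$, so $A^t$ has a log-concave density as well, and multiplying by the positive constant $\sqrt{n/m}$ keeps the density log-concave (up to the usual change-of-variables factor).

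The second and main task is to show that $A'^t=\sqrt m(C_1^t,\ldots,C_m^t)$ is isotropic, i.e.\ that $\mathbb{E} A_{ij}A_{kl}=\frac{1}{m}\delta_{(i,j),(k,l)}$. The key observation is that $G$ is \emph{bi-unitarily invariant}: $G(UAV)=G(A)$ for every orthogonal $U\in O(n)$ and $V\in O(m)$, because the spectrum $(s_1,\ldots,s_k)$ is preserved. I would then exploit two special cases of this invariance:
\begin{enumerate}
\item \textbf{Signed permutations.} Taking $U$ a permutation of rows and $V$ a permutation of columns shows that the joint distribution of the $A_{ij}$ is invariant under permutations of the indices $(i,j)$ that act separately on rows and columns. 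In particular $\mathbb{E} A_{ij}^{2}$ does not depend on $(i,j)$; call the common value $\alpha$.
\item \textbf{Sign flips.} Taking $U=\mathrm{diag}(\varepsilon_1,\ldots,\varepsilon_n)$ with $\varepsilon_r\in\{\pm1\}$ (and $V=I_m$) flips the sign of row $r$ of $A$ without changing its distribution. If $i\neq k$ we can flip row $i$ alone, so $\mathbb{E} A_{ij}A_{kl}=-\mathbb{E} A_{ij}A_{kl}=0$. If $i=k$ and $j\neq l$, the analogous trick with $V$ flipping column $j$ gives $\mathbb{E} A_{ij}A_{il}=0$.
\end{enumerate}
Hence the covariance of $\mathrm{vec}(A)$ equals $\alpha I_{nm}$. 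The normalization $\mathbb{E}\|A\|_{HS}^{2}=n$ yields $\alpha\cdot nm=n$, so $\alpha=1/m$, which is exactly what is needed for $\sqrt m\,\mathrm{vec}(A)$ to be isotropic in $\mathbb{R}^{nm}$.

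For the transposed statement, I would repeat the same argument: $A^{t}$ inherits bi-unitary invariance (orthogonal matrices on the two sides simply get swapped), and the signed-permutation and sign-flip arguments again force the covariance of $\mathrm{vec}(A^{t})$ to be a scalar multiple of the identity. The overall scalar $\sqrt{n/m}$ is then chosen so that the resulting Hilbert--Schmidt second moment matches the normalization required by Definition~\ref{log-concave-defi} applied to an $m\times n$ matrix.

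I do not foresee a serious obstacle: the argument is purely a symmetry computation combined with the Lewis convexity result. The only point that needs a little care is the bookkeeping of the scalar constants ($\alpha$, the $\sqrt m$ in the definition, and the rescaling $\sqrt{n/m}$), since the conventions mix the ``physical'' Hilbert--Schmidt normalization of $A$ with the ``isotropy'' normalization built into Definition~\ref{log-concave-defi}; I would double-check that the stated factor $\sqrt{n/m}$ is the one consistent with $\mathbb{E}\|A\|_{HS}^{2}=n$ before finalizing.
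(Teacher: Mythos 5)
Your proof is correct and follows essentially the same route as the paper: log-concavity via Lewis's characterization, and isotropy via invariance of the density under sign flips and permutations of rows and columns (the paper's transformations $D_j$, $E_{(i,k)}$, $F_{(j,l)}$, which preserve singular values and have Jacobian $1$), combined with the normalization $\mathbb{E}\Vert A\Vert_{HS}^2=n$ to pin down the common variance $1/m$. Your caution about the rescaling factor is well founded: the normalization consistent with Definition~\ref{log-concave-defi} for an $m\times n$ matrix is $\sqrt{m/n}\,A^t$ (so that its Hilbert--Schmidt second moment equals $m$ and each entry has variance $1/n$), so the factor $\sqrt{n/m}$ in the statement appears to be inverted --- a point the paper's own proof leaves unaddressed since it only treats $A$ itself.
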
   
   
 \begin{proof}
Since $f$ is an absolutely symmetric convex function, then 
$G$ is log-concave as we have seen above. It remains to prove the isotropic 
condition. 

Let $(a_{i,j})$ be the entries of $A$. Fix $(i,j)$ and $(k,l)$ two different indices. 
 Note $D_j= diag(1,..,-1,..,1)$ the $m\times m$ diagonal matrix 
 where the $-1$ is on the $j^{th}$ term. 
 Let $E_{(i,k)}$ be the $n\times n$ matrix obtained by swapping 
 the $i^{th}$ and $k^{th}$ rows in the identity matrix. Note 
 also $F_{(j,l)}$ the $m\times m$ matrix obtained by swapping 
 the $j^{th}$ and $l^{th}$ rows in the identity matrix.\\
 
 It is easy to see that $\alpha:= AD_j$ change the $j^{th}$ column of $A$ to its opposite and 
 keep the rest unchanged. Note that $AD_j$ has the same singular 
 values as $A$.\\
 
 Similarly, $\beta:=E_{(i,k)}AF_{(j,l)}$ permute $a_{i,j}$ with $a_{k,l}$ and keep the other 
 terms unchanged. Note also that $E_{(i,k)}AF_{(j,l)}$ has the same singular 
 values as $A$.\\
 
 Finally note that these $\alpha$ and $\beta$ have a Jacobian equal to $1$, and 
 since $f$ is absolutely symmetric, these transformations, which preserve the singular 
 values, don't affect the density.\\
 If $j\neq l$, by a change of variables $M=AD_j$ the density is invariant and 
 we have
 $$
 \int a_{i,j}a_{k,l} G(A)dA = -\int a_{i,j}a_{k,l} G(A)dA .
 $$
 Doing the change of variables $M= D_iA$ when $i\neq k$, we can conclude that 
 $$
  \mathbb{E} a_{i,j} a_{k,l}=0 \quad \text{ if } (i,j)\neq (k,l) .
  $$
  Now by a change of variables $M= E_{(i,j)}AF_{(k,l)}$ 
  the density is invariant and we have 
  $$
  \int a_{i,j}^2 G(A)dA = \int a_{k,l}^2G(A)dA .
  $$
  This implies that 
  $$
  \int a_{i,j}^2 G(A)dA = \frac{1}{nm}\sum_{k\leqslant n, l\leqslant m} \int a_{k,l}^2G(A)dA 
  =\frac{1}{nm}\int \Vert A\Vert_{\rm HS}^2G(A)dA = \frac{1}{m} .
  $$
  
  As a conclusion, we have shown that 
  $$
 \mathbb{E} a_{i,j} a_{k,l} = \frac{1}{m}\delta_{(i,j),(k,l)},
 $$
 which means that $A$ is isotropic. 
\end{proof}  

\begin{Rq}
The condition $\mathbb{E} \Vert A\Vert_{HS}^2=n$ can be obtained 
by a good normalization of the function $f$. Indeed, suppose that 
  $$
  \frac{1}{n}\int \Vert A\Vert_{\rm HS}^2 G(A)dA = c.
  $$
  Define $\hat{f}(x) = f(\sqrt{c} x)- nm \log (\sqrt{c})$ and $\hat{G}(A) = \exp\left(- \hat{f}(s_1(A),..,s_{k}(A))\right)$.\\
  
  Note that $\hat{G}$ is a probability density. Indeed, by the change of variables $M= \sqrt{c}A$ we have
  
  \begin{align*}
  \int \hat{G}(A)dA &= \int \exp\left(- f(\sqrt{c}s_1(A),..,\sqrt{c}s_{k}(A))\right)(\sqrt{c})^{nm}dA \\
  &=\int \exp\left(- f(s_1(M),..,s_{k}(M))\right)dM =1 .
  \end{align*}
 Note also that $\hat{G}$ satisfies the isotropic condition. Indeed, by the same change of variables 
  we can write
  $$
  \frac{1}{n}\int \Vert A\Vert_{\rm HS}^2\hat{G}(A)dA= \frac{1}{cn}\int \Vert M\Vert_{\rm HS}^2G(M)dM = 1 .
  $$
\end{Rq}

Applying Theorem~\ref{upper-bound-log-concave} and Theorem~\ref{lower-bound-log-concave} 
for $A$ and $A^t$ we get:

\begin{prop}
Let $A$ be an $n\times m$ random matrix whose density with respect to Lebesgue 
is given by
$$
G(A)=\exp\left(-f(s_1(A),...,s_k(A))\right),
$$
where $f$ is an absolutely symmetric convex function, properly normalized as above and $k=\min(n,m)$.\\
Suppose that $m\geqslant \frac{C}{\varepsilon^6}\left[
\log (\frac{Cn}{\varepsilon})\right]^2$ and $n\geqslant \frac{C}{\varepsilon^6}\left[
\log (\frac{Cm}{\varepsilon})\right]^2$, taking 
$N=\frac{96\max(n,m)}{\varepsilon^2}$ then with probability $\geqslant 1-\exp(-c\varepsilon^3\sqrt{k})$ we have
 $$
1-\varepsilon\leqslant \lambda_{\min}\left(\frac{1}{N}\sum_{i=1}^N A_iA_i^t\right)\leqslant \lambda_{\max}\left(\frac{1}{N}\sum_{i=1}^NA_iA_i^t\right)\leqslant 1+\varepsilon
$$
and
$$
(1-\varepsilon)\frac{n}{m}\leqslant \lambda_{\min}\left(\frac{1}{N}\sum_{i=1}^N A_i^tA_i\right)\leqslant \lambda_{\max}\left(\frac{1}{N}\sum_{i=1}^NA_i^tA_i\right)\leqslant (1+\varepsilon)\frac{n}{m} .
$$
\end{prop}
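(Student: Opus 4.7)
The plan is to reduce both assertions directly to Theorems~\ref{upper-bound-log-concave} and \ref{lower-bound-log-concave} (equivalently Theorem~\ref{th-covariance-log-concave}) by applying them once to $A$ itself and once to a rescaled transpose $A^t$, using the fact, stated in the previous proposition, that both $A$ (of size $n\times m$) and $\sqrt{n/m}\,A^t$ (of size $m\times n$) are isotropic log-concave.

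For the first pair of inequalities, I would simply apply Theorem~\ref{th-covariance-log-concave} to $A$ with parameter $\varepsilon$ and $N\ge 96n/\varepsilon^{2}$. The dimensional condition required there is that the number of columns, which here is $m$, satisfy $m\geqslant \frac{C}{\varepsilon^{6}}[\log(Cn/\varepsilon)]^{2}$ -- after absorbing the factor $\log N = O(\log(\max(n,m)/\varepsilon))$ into the constant, this matches the hypothesis in the statement. This yields, with probability at least $1-\exp(-c\varepsilon^{3}\sqrt{m})$,
\[
\Bigl\|\tfrac{1}{N}\sum_{i=1}^{N} A_{i}A_{i}^{t}-I_{n}\Bigr\|\le \varepsilon,
\]
which is exactly the first block of eigenvalue inequalities.

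For the second pair, the idea is to apply the same machinery to the rescaled transpose $\widetilde{A}:=\sqrt{n/m}\,A^{t}$, which is an $m\times n$ isotropic log-concave matrix by the cited proposition. Applying Theorem~\ref{th-covariance-log-concave} to $\widetilde{A}$, with the roles of $n$ and $m$ swapped and with $N\ge 96m/\varepsilon^{2}$, gives
\[
\Bigl\|\tfrac{1}{N}\sum_{i=1}^{N} \widetilde{A}_{i}\widetilde{A}_{i}^{t}-I_{m}\Bigr\|\le \varepsilon
\]
with probability at least $1-\exp(-c\varepsilon^{3}\sqrt{n})$; the required dimensional condition is that the number of columns of $\widetilde{A}$, namely $n$, satisfy $n\geqslant \frac{C}{\varepsilon^{6}}[\log(Cm/\varepsilon)]^{2}$, which is the second hypothesis in the statement. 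Unwinding $\widetilde{A}_{i}\widetilde{A}_{i}^{t}=\tfrac{n}{m}A_{i}^{t}A_{i}$, the displayed inequality translates to the announced sandwich for $\lambda_{\min}$ and $\lambda_{\max}$ of $\tfrac{1}{N}\sum A_{i}^{t}A_{i}$ around $n/m$.

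The choice $N=96\max(n,m)/\varepsilon^{2}$ covers both regimes simultaneously, and a union bound over the two failure events (each of probability $\le \exp(-c\varepsilon^{3}\sqrt{k})$ with $k=\min(n,m)$) completes the argument. There is essentially no obstacle here beyond bookkeeping: the work has already been done in Theorems~\ref{upper-bound-log-concave} and \ref{lower-bound-log-concave}, and the only subtlety is to check that the logarithmic factor $\log(CnN)$ appearing in those theorems is, after substituting $N=96\max(n,m)/\varepsilon^{2}$, absorbed by the assumed lower bounds on $m$ and $n$ up to a redefinition of the universal constant $C$.
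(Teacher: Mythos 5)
Your proposal is correct and is exactly the route the paper takes (the paper in fact gives no written proof, only the remark that one applies Theorem~\ref{upper-bound-log-concave} and Theorem~\ref{lower-bound-log-concave} to $A$ and to the rescaled transpose). One bookkeeping caution: with the normalization $\widetilde{A}=\sqrt{n/m}\,A^t$ that you (and the paper's preceding proposition) use, the unwinding $\widetilde{A}_i\widetilde{A}_i^t=\frac{n}{m}A_i^tA_i$ would place the eigenvalues of $\frac{1}{N}\sum A_i^tA_i$ in $\frac{m}{n}[1-\varepsilon,1+\varepsilon]$ rather than the claimed $\frac{n}{m}[1-\varepsilon,1+\varepsilon]$; the isotropy convention for an $m\times n$ matrix actually forces the rescaling $\sqrt{m/n}\,A^t$ (so that the entries have second moment $1/n$), and with that correction the unwinding gives $\widetilde{A}_i\widetilde{A}_i^t=\frac{m}{n}A_i^tA_i$ and the stated sandwich around $n/m$ follows; this is a typo inherited from the paper, not a flaw in your argument.
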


\vskip 0.5cm

\textbf{Acknowledgement :} I am grateful to my PhD advisor Olivier Guédon for his
encouragement, his careful review of this manuscript and many fruitful discussions. I would also like to thank 
Djalil Chafai and Alain Pajor for some interesting discussions and Matthieu Fradelizi 
for his precious help. I would also like to thank the anonymous referees for their valuable comments.

\nocite{*}
\bibliographystyle{abbrv}
\bibliography{bibliography-covariance}

\end{document}